\newtheorem{theorem}{Theorem}
\newtheorem{definition}[theorem]{Definition}
\newtheorem{lemma}[theorem]{Lemma}
\newtheorem{proposition-definition}[theorem]{Proposition-Definition}
\newtheorem{corollary}[theorem]{Corollary}
\theoremstyle{definition}
\newtheorem{example}[theorem]{Example}
\theoremstyle{remark}
\newtheorem*{remark}{Remark}
\begin{document}
\author[Hamblen, Jones, and Madhu]{Spencer Hamblen, Rafe Jones, and Kalyani Madhu}
\title{The density of primes in orbits of $z^d+c$}
\thanks{The second author's research was partially supported by NSF grant DMS-0852826.}

\maketitle

\begin{abstract}
Given a polynomial $f(z) = z^d + c$ over a global field $K$ and $a_0 \in K$, we study the density of prime ideals of $K$ dividing at least one element of the orbit of $a_0$ under $f$.  The density of such sets for linear polynomials has attracted much study, and the second author has examined several families of quadratic polynomials, but little is known in the higher-degree case. We show that for many choices of $d$ and $c$ this density is zero for all $a_0$, assuming $K$ contains a primitive $d$th root of unity. The proof relies on several new results, including some ensuring the number of irreducible factors of the $n$th iterate of $f$ remains bounded as $n$ grows, and others on the ramification above certain primes in iterated extensions. Together these allow for nearly complete information when $K$ is a global function field or when $K={\mathbb{Q}}(\zeta_d)$.
\end{abstract}

\maketitle

\section{Introduction}
Let $K$ be a field, and let $f(z)=z^d+c\in K[z]$.  For $n \geq 1$, denote by $f^n(z)$ the $n$th iterate of $f$, and set $f^0(z) = z$. By the {\em orbit} of $a_0 \in K$ under $f$, we mean the set 
\[O_f(a_0) = \{f^n(a_0) : n \geq 0\}.\]

When $K$ is a global field, we denote by ${\mathcal{O}}_K$ the usual ring of integers of $K$ (in the number field case) or the integral closure in $K$ of ${\mathbb{F}}_q[t]$ (in the function field case). We say that a prime ideal ${\mathfrak{q}} \in {\mathcal{O}}_K$ \textit{divides} $O_f(a_0)$ if there exists at least one $n \geq 0$ with $f^n(a_0) \neq 0$ and  $v_{\mathfrak{q}}( f^n(a_0))>0$. Our purpose in this article is to study the set of prime ideals
\[P_{f}(a_0) = \{{\mathfrak{q}} \subset {\mathcal{O}}_K : \text{${\mathfrak{q}}$ divides $O_f(a_0)$}\},\]
and in particular to show that in many circumstances it is sparse within the set of all prime ideals of ${\mathcal{O}}_K$. This problem has applications to the dynamical Mordell-Lang conjecture \cite{ML} and to questions about the size of the set of hyperbolic maps in $p$-adic multibrot sets \cite{galmart}. It is also studied in \cite{periodsmoduloprimes}, where it is shown under much more general hypotheses that the density of $P_f(a_0)$ is less than one; here our goal is to show that $P_f(a_0)$ has density zero when $K$ contains a primitive $d$th root of unity. The set $O_f(a_0)$ may also be considered as a non-linear recurrence sequence, and in this guise the question of the density of $P_{f}(a_0)$ has been much studied (see \cite{quaddiv} for a brief overview, and \cite{ballot, recseq} for more comprehensive studies).   The family $f(z) = z^d + c$ is a natural candidate for study in this regard, since many of the arithmetic properties of the orbits of a polynomial depend on the orbits of its critical points, and this family has only one critical point. That this critical point is zero also plays a key role, since it ensures that the critical orbit has a property we call rigid divisibility; see Section \ref{irred}. See recent work in \cite{ingramcanon} and \cite{krieger} for other arithmetic dynamical properties of this family.

Denote by $D(S)$ the Dirichlet density of a set $S$ of primes of $K$, i.e.
\[D(S) = \lim_{s \rightarrow 1^{+}} \frac{\sum_{{\mathfrak{q}} \in S} N({\mathfrak{q}})^{-s}} {\sum_{{\mathfrak{q}}}N({\mathfrak{q}})^{-s}},\]
where $N({\mathfrak{q}}) = \#({\mathcal{O}}_K/{\mathfrak{q}}{\mathcal{O}}_K)$, and the sum in the denominator runs over all primes of $K$. In the number field case, we may replace this with the more intuitive notion of natural density:
\[D(S) = \limsup_{x \to \infty} \frac{\#\{q \in S : N({\mathfrak{q}}) \leq x\}}{\#\{{\mathfrak{q}} : N({\mathfrak{q}}) \leq x\}},\]
We remark that the set $P_f(a_0)$ is infinite unless $O_f(a_0)$ is finite or $f(z) = cz^d$, as can be shown by trivial modifications to \cite[Theorem 6.1]{quaddiv}.

To state our main result, we take the set $M_K$ of places of $K$ to be a complete set of inequivalent absolute values on $K$, each extending one of the standard absolute values on ${\mathbb{Q}}$ or ${\mathbb{F}_q}(t)$.  (By a standard absolute value on ${\mathbb{F}_q}(t)$, we mean $|x| = q^{-v(x)}$, where $v$ is the valuation corresponding to a prime  of ${\mathbb{F}_q}[t]$ or the degree map.) Each non-archimedean $v \in M_K$ has an associated residue field $\{|x|_v \leq 1\} / \{|x|_v < 1\}$, whose characteristic is the \textit{residue characteristic} of $v$.

\begin{theorem} \label{main1}
Let $K$ be a global field containing a primitive $d$th root of unity, and let $f(z)=z^d+c$. Suppose $c \in K$, $O_f(0)$ is infinite, and one of the following holds:
\begin{itemize}
	\item[(1)] There exists a non-archimedean $v \in M_K$ such that $|c|_v < 1$ and the residue characteristic of $v$ is prime to $d$; or
	\item[(2)] $d$ is prime and for some $j \geq 0$, $f^j(z) = g_1(z) \ldots g_t(z)$ with each $g_i$ irreducible and none of $\pm g_i(f(0)), g_i(f^2(0)), g_i(f^3(0)), \ldots$ is a $d$th power in $K$.
\end{itemize}
Then $D(P_f(a_0)) = 0$ for any $a_0 \in K$.
\end{theorem}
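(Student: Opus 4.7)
The strategy is to reduce $D(P_f(a_0))=0$ to a Chebotarev density computation on the Galois group of iterated preimages of $0$ under $f$, and then to exploit each hypothesis to show that this group is sufficiently large. Throughout, let $K_N$ denote the splitting field of $f^N(z)$ over $K$, and put $G_N := {\rm Gal}(K_N/K)$.

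The first step is a Chebotarev reduction. If $\q \in P_f(a_0)$ and $\q$ divides $f^j(a_0)$, then for every $N \ge j$ at which $\q$ is unramified in $K_N$ (all but finitely many), each $\overline{f^i(a_0)}$ with $0 \le i \le j$ is a root of $\overline{f}^{N-i}$ in the residue field of $\q$; in particular $\overline{f}^N$ has a root there. By Chebotarev, the density of primes for which $\overline{f}^N$ has a root in the residue field equals the proportion ${\rm FP}(G_N)$ of $\sigma \in G_N$ fixing at least one of the $d^N$ roots of $f^N$ in $\overline{K}$. Hence $D(P_f(a_0)) \le {\rm FP}(G_N)$ for every $N$, and it suffices to show ${\rm FP}(G_N)\to 0$.

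The second step is to fit $G_N$ into the $N$-fold iterated wreath product $W_N := C_d \wr \cdots \wr C_d$. Because $\zeta_d \in K$, adjoining an $f$-preimage of an element $\beta$ is a degree-dividing-$d$ Kummer step (adjoin $\sqrt[d]{\beta-c}$), which gives a natural embedding $G_N \hookrightarrow W_N$ via the action on the preimage tree of $0$. An element of $W_N$ fixes a leaf iff its top-level permutation is trivial and some subtree element fixes a leaf; writing $p_N := {\rm FP}(W_N)$ this yields
\[
p_N \;=\; \frac{1}{d}\bigl(1-(1-p_{N-1})^d\bigr),
\]
and an elementary analysis of this recursion gives $p_N=O(1/N)\to 0$. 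The theorem then follows once the image $G_N \subseteq W_N$ is shown to be large enough that ${\rm FP}(G_N)$ also vanishes.

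Under Condition~(1), I would exploit the place $v$ with $|c|_v<1$ and residue characteristic coprime to $d$: the rigid-divisibility property of the critical orbit of $f$ (Section~\ref{irred}) keeps $v(f^n(0))$ coprime to $d$ for all $n$, and a standard $v$-adic Kummer argument then forces each step $K_n/K_{n-1}$ to be maximal, so that $G_N=W_N$. Under Condition~(2), starting from the factorization $f^j=g_1\cdots g_t$, the non-$d$th-power hypothesis on $\pm g_i(f^k(0))$, combined with the primality of $d$ and the irreducibility/factor-count results of Section~\ref{irred}, propagates Kummer-maximality up the tower along each subtree rooted at a root of $g_i$, again forcing ${\rm FP}(G_N)\to 0$. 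The principal obstacle is exactly this last step: in both cases one must turn a local, one-level algebraic hypothesis (about $c$ modulo $v$, or about $d$th-power status of certain polynomial values) into an all-levels statement about the arboreal Galois image, and this is where the new irreducibility and ramification theorems of the paper carry the weight; the Chebotarev and wreath-product steps are essentially standard once those inputs are in hand.
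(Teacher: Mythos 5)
Your overall scaffolding (Chebotarev reduction to fixed-point proportions, the embedding of $G_N$ into the iterated wreath product, invoking rigid divisibility at the place $v$) is aligned with the paper, but there is a genuine gap at the decisive step, and one structural omission.

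The decisive gap is your claim, in both cases, that the hypotheses force Kummer-maximality at every level (so that $G_N=W_N$, or at least that ${\rm Gal}(K_n/K_{n-1})$ is as large as possible for all $n$). This is not what the hypotheses give you, and the paper does not prove it. Under~(1), rigid divisibility gives only $v(f^n(0))=v(c)$ for all $n$; the hypothesis $|c|_v<1$ allows $v(c)$ to be any positive integer, in particular a multiple of $d$, so $v(f^n(0))$ need not be coprime to $d$ and your ``standard $v$-adic Kummer argument'' does not force maximality. What the local ramification computation (Lemma~\ref{Kummer ramification} and Theorem~\ref{mart1}) actually delivers is much weaker: that $f(z)-\alpha$ is irreducible over the full splitting field $K_{n-1}$ for large $n$, i.e.\ that the Galois process is an \emph{eventual martingale}. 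Under~(2), the non-$d$th-power hypothesis is a statement over $K$, whereas maximality of $H_n$ requires (Lemma~\ref{maxone}) that $g(f^n(0))$ not be a $p$th power in the splitting field $K_{n-1}$, a far stronger condition that does not follow formally. The paper gets maximality only for \emph{infinitely many} $n$, and only by a Diophantine argument: Theorem~\ref{inf many primitives}, which runs Siegel's theorem (and Samuel's function-field analogue) on the curves $g(f^3(z))=uy^r$. This input is essential and cannot be replaced by a level-by-level Kummer computation. The final convergence then comes from Doob's martingale convergence theorem applied to the Galois process, combined with a counting estimate (Lemma~\ref{count $X_n=t$}) at the infinitely many maximal levels; with maximality only along a subsequence, one cannot simply quote the recursion ${\rm FP}(W_N)=O(1/N)$.

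The structural omission is eventual stability. You work with the full preimage tree of $0$ under $f$, but $f^j$ may factor over $K$ (this is part of hypothesis~(2), and under~(1) it must be proved via Theorem~\ref{evstab}). The paper therefore decomposes $P_f(a_0)$ as a finite union of sets $P_{f,g_i}(a_0)$, each attached to an irreducible factor $g_i$ of some $f^j$, and argues separately on the trees rooted at the roots of $g_i$. Without this decomposition your wreath-product recursion for $p_N$ does not describe the tree correctly when $f^j$ is reducible.
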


Condition (2) is often applied when $j = 0$, in which case it holds when none of $\pm f(0), f^2(0), f^3(0), \ldots$ is a $d$th power in $K$. The $\pm$ attached to $g_i(f(0))$ is in fact $-1$ if $d = 2$ and $\deg g_i$ is odd, and $1$ otherwise. We remark that the two conditions in Theorem \ref{main1} are logically independent. For instance, taking $K = {\mathbb{Q}}$ and $d = 2$, we have that $f(z) = z^2 - \frac{k^2}{k^2 -1}$ for $k \in {\mathbb{Z}}_{\geq 2}$ not a power of two satisfies (1) but not (2), since $f^2(0) = k^2/(k^2 - 1)^2$. On the other hand, if $k \in {\mathbb{Z}}_{\geq 1}$ is odd, then $f(z) =z^2 + 2^k$ clearly fails to satisfy (1), but can be shown to satisfy (2) with $j = 0$.

Theorem \ref{main1} represents a generalization of \cite[Theorem 1.2, part (iii)]{quaddiv} in two ways. First, it holds for maps of higher degree than two, and indeed it is the first result to cover such maps. Second, it handles many values of $c$ that are not in the ring of integers of $K$; this gives for instance a partial answer to the question posed in \cite{krieger} on whether the results of \cite{quaddiv} can be extended to $z^2 + c \in {\mathbb{Q}}[z]$. The proof of Theorem \ref{main1} is made possible first by improved results on the nature of the factorization into irreducibles of iterates of $f(z)$; see Theorem \ref{evstab} and the discussion below. Part (1) of the theorem is proved via a new method that hinges on a study of the ramification degrees of extensions generated by iteration of $f$ over the local field $K_v$ given by the completion of $K$ at $v$. Part (2) of Theorem \ref{main1} is proved by a global method extending the work of the second author in \cite{quaddiv} and \cite{galmart} from certain quadratics over ${\mathbb{Q}}$ and ${\mathbb{F}}_p(t)$ to higher-degree polynomials over more general global fields. 

In the case where $K$ is a function field over ${\mathbb{F}}_q$, part (1) of Theorem \ref{main1} gives a nearly complete result. Recall that for a global field $K$ and $a \in K \setminus \{0\}$, we have the product formula
\begin{equation} \label{prodform}
\prod_{v \in M_K} |a|_v^{n_v} = 1,
\end{equation}
where $n_v $ is the degree of the local extension $[K_v : {\mathbb{Q}}_v]$ in the number field case and $[K_v : {\mathbb{F}_q}(t)_v]$ in the function field case \cite[Proposition 8.7]{Lorenzini}. Moreover, $|a|_v = 1$ for all $v \in M_K$ if and only if $a$ is a root of unity. When $K$ is a function field, we have the crucial fact that every $v \in M_K$ is non-archimedean, and the associated residue field is a finite extension of ${\mathbb{F}}_q$. Hence the residue characteristic at every place is equal to the characteristic of ${\mathbb{F}}_q$ (which is the same as the characteristic of $K$). In addition, $a \in K$ is a root of unity if and only if $a$ belongs to the algebraic closure of ${\mathbb{F}_q}$ in $K$, called the \textit{field of constants} of $K$. 
We immediately obtain:
\begin{corollary} \label{maincor1}
Let $K$ be a global function field of characteristic prime to $d$, let $f(z) = z^d + c$, and suppose that $c$ does not belong to the field of constants of $K$. 
Then $D(P_f(a_0)) = 0$ for any $a_0 \in K$.
\end{corollary}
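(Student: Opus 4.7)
The plan is to invoke part (1) of Theorem \ref{main1}, whose three hypotheses are that $K$ contains a primitive $d$th root of unity, that $O_f(0)$ is infinite, and that there exists a non-archimedean place $v \in M_K$ with $|c|_v < 1$ whose residue characteristic is prime to $d$. The containment $\zeta_d \in K$ is taken as understood in the corollary; the residue-characteristic condition will be automatic, because $K$ is a global function field of characteristic $p$ prime to $d$, so every $v \in M_K$ is non-archimedean and has residue field a finite extension of $\Fq$, hence of characteristic $p$. So once one place with $|c|_v < 1$ is produced, condition (1) of the theorem is verified.

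To produce such a place, I would use the two facts recalled just before the corollary: an element $a \in K$ satisfies $|a|_w = 1$ for all $w \in M_K$ if and only if $a$ is a root of unity, and in the function field setting $a$ is a root of unity if and only if it lies in the field of constants of $K$. Since by hypothesis $c$ is not in the field of constants, there is some place where $|c|_w \neq 1$; combining this with the product formula \eqref{prodform} produces both at least one place $v$ with $|c|_v < 1$ and at least one place $v'$ with $|c|_{v'} > 1$. The former gives condition (1) of Theorem \ref{main1}, and the latter will be used to show that the critical orbit is infinite.

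For the infinitude of $O_f(0)$, I would show inductively that $|f^n(0)|_{v'} = |c|_{v'}^{d^{n-1}}$ for every $n \geq 1$: the case $n=1$ reads $|f(0)|_{v'} = |c|_{v'}$, and inductively $|f^n(0)^d|_{v'} = |c|_{v'}^{d^n} > |c|_{v'}$, so the non-archimedean triangle inequality applied to $f^{n+1}(0) = f^n(0)^d + c$ gives $|f^{n+1}(0)|_{v'} = |c|_{v'}^{d^n}$. Thus $|f^n(0)|_{v'} \to \infty$ and the orbit is infinite. All hypotheses of Theorem \ref{main1}(1) then hold and yield $D(P_f(a_0)) = 0$. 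No step here presents a real obstacle; the corollary is essentially a translation of condition (1) into the cleanest possible function-field language via the product formula and the characterization of roots of unity as elements of the field of constants.
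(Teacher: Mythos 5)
Your proof is correct and follows the paper's approach exactly: the paper derives the corollary as an immediate consequence of Theorem~\ref{main1}(1) after recalling the product formula, the equivalence of $|c|_v = 1$ for all $v$ with $c$ being a root of unity, and the fact that every place of a global function field is non-archimedean with residue characteristic equal to the characteristic of $K$. You go one small step further by explicitly verifying the infinitude of $O_f(0)$ via a place $v'$ with $|c|_{v'} > 1$ (also supplied by the product formula), a point the paper leaves implicit; this extra care is correct.
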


Corollary \ref{maincor1} is a significant generalization of Theorem 1.4 of \cite{galmart}; indeed the latter essentially gives Corollary \ref{maincor1} in the special case $K = {\mathbb{F}_p}(t)$ and $f(z) = z^2 + t$, where $p$ is an odd prime. Correspondingly, in the language of \cite{galmart}, Corollary \ref{maincor1} applied to $f(z) = z^d + t$ shows that for $p \nmid d$, the hyperbolic subset  
\[\{c \in {\mathbb{C}}_p : \text{$0$ tends to an attracting cycle under iteration of $f(z) = z^d + c$}\}\]
of the $p$-adic multibrot set 
\[\{c \in {\mathbb{C}}_p : \text{$0$ has bounded orbit under iteration of $f(z)=z^d+c$}\}\]
has density zero in a natural sense. See \cite{galmart} for more details. 

We also get an interesting application of Theorem \ref{main1} in the case $K={\mathbb{Q}}(\zeta_p)$. The primes of ${\mathcal{O}}_K$ lying over the $q \in {\mathbb{Z}}$ with $q \equiv 1 \bmod{p}$ form a density one subset of the primes in ${\mathcal{O}}_K$, because these primes split, and so have norm $q$, while the norm of a prime lying over any other $q \in {\mathbb{Z}}$ is at least $q^{2}$. For a prime ${\mathfrak{q}}$ of ${\mathcal{O}}_K$, it is easy to check that ${\mathfrak{q}} \mid f^n(a_0)$ if and only if $q \mid f^n(a_0)$, where $q = {\mathfrak{q}} \cap {\mathcal{O}}_K$. 
\begin{corollary} \label{maincor2}
Let $p$ be prime and $f(z) = z^p + c$ for some $c \in {\mathbb{Z}}$ with $c \neq 0$ (if $p = 2$ we also exclude $c = -1$). Then the set of primes 
$q \equiv 1 \bmod{p}$ that belong to $P_f(a_0)$ has density zero in the set of of all primes $q \equiv 1 \bmod{p}$
\end{corollary}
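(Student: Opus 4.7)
The plan is to apply Theorem \ref{main1} to $K = \mathbb{Q}(\zeta_p)$ with $d = p$: this field contains a primitive $p$th root of unity and contains $c \in \mathbb{Z}$. The orbit $O_f(0)$ is infinite under the stated hypotheses (a short check over $\mathbb{Z}$; the exclusion of $c = -1$ for $p = 2$ removes the main preperiodic obstruction, and any remaining sporadic preperiodic case would require a separate elementary argument on the resulting low-degree recurrence).

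Verifying one of conditions (1) or (2) of Theorem \ref{main1} proceeds by cases on the factorization of $c$. When $c$ has any prime divisor $\ell \neq p$ in $\mathbb{Z}$, any place $v$ of $K$ over $\ell$ has $|c|_v < 1$ and residue characteristic coprime to $p$, immediately yielding condition (1). The remaining case is $c = \pm p^k$ for some $k \geq 0$, where we invoke condition (2). A useful reduction is that a nonzero rational integer is a $p$th power in $\mathbb{Q}(\zeta_p)$ if and only if it is a $p$th power in $\mathbb{Z}$; this follows from Kummer theory together with $\mathbb{Q}(m^{1/p}) \cap \mathbb{Q}(\zeta_p) = \mathbb{Q}$ for $m \in \mathbb{Z}$ not a rational $p$th power.

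The main obstacle is to choose $j$ appropriately and verify the $p$th-power avoidance in condition (2). When $-c$ is not a rational $p$th power, $f$ is irreducible over $K$, so we take $j = 0$; a brief induction shows $v_p(f^n(0)) = v_p(c)$ for all $n \geq 1$, and when this common value is coprime to $p$ the iterates cannot be $p$th powers. The delicate subcase is $c = \pm p^{pm}$, where $-c = a^p$ for some $a \in \mathbb{Z}$ and $f(z) = \prod_i (z - \zeta_p^i a)$ splits over $K$ into linear factors; we then take $j = 1$ and must rule out $p$th powers among the values $\pm g_i(f(0)), g_i(f^n(0))$ for $n \geq 2$. This is handled by a $\mathfrak{p}$-adic valuation analysis at the unique prime $\mathfrak{p}$ of $K$ above $p$ (which has ramification index $p - 1$): using the rigid divisibility of the critical orbit from Section \ref{irred}, one pins down $v_\mathfrak{p}(g_i(f^n(0)))$ as a fixed multiple of $p - 1$ coprime to $p$, precluding $p$th powers. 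For the parallel situations at $p = 2$ in which the $v_p$-based argument fails (for instance $c = 4^m$), one instead appeals to rigid divisibility at an odd prime divisor of some $f^n(0)$ to produce an odd valuation, ruling out squares. If the resulting $a$ has a form yielding further splitting, one iterates the analysis with larger $j$.

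Having established $D(P_f(a_0)) = 0$ in $K$, the passage to rational primes $q \equiv 1 \bmod p$ is as sketched just before the corollary: such $q$ split completely in $K/\mathbb{Q}$ into $p - 1$ primes of $K$ each of norm $q$ and constitute a density-one subset of the primes of $K$, while any other rational prime $q \neq p$ contributes only primes of norm at least $q^2$. Since $\mathfrak{q} \mid f^n(a_0)$ if and only if $q \mid f^n(a_0)$ for $\mathfrak{q}$ over $q$, comparing $\#\{q \leq x : q \equiv 1 \bmod p, q \in P_f(a_0)\}$ with $\#\{q \leq x : q \equiv 1 \bmod p\}$ through the corresponding prime counts in $K$ yields the desired density zero.
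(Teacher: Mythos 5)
Your broad architecture is right and matches the paper: work over $K = \mathbb{Q}(\zeta_p)$, apply Theorem \ref{main1}, and transfer the density statement back to rational primes $q \equiv 1 \bmod p$ using the fact that these split completely and have density one among primes of $K$. Your observation that condition (1) already disposes of every $c$ with a prime factor $\ell \neq p$ is a genuine and pleasant shortcut that the paper does not take (the paper applies condition (2) uniformly via Lemma \ref{zcase}).

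However, there is a real gap in your residual case $c = \pm p^k$ when $-c$ is a $p$th power in $\mathbb{Z}$. Writing $k = pm$ and $r = \mp p^m$ so that $f(z) = \prod_{i}(z + r\zeta_p^i)$, your plan is to pin down $v_\mathfrak{p}(g_i(f^n(0)))$ at the prime $\mathfrak{p}$ over $p$ and note it is coprime to $p$. For $m \geq 1$, rigid divisibility and the ultrametric give $v_\mathfrak{p}(g_i(f^n(0))) = v_\mathfrak{p}(r) = m(p-1)$, but $m(p-1)$ is coprime to $p$ only when $p \nmid m$; if $p \mid m$ (e.g.\ $p = 3$, $c = \pm 3^9$) the valuation is a multiple of $p$ and the argument yields nothing. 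Worse, when $m = 0$ (i.e.\ $c = \pm 1$), $v_\mathfrak{p}(f^n(0)) = v_\mathfrak{p}(r) = 0$ and the ultrametric inequality gives no lower bound at all. Your proposed fix, ``iterate the analysis with larger $j$,'' does not apply: the $g_i$ are already linear, so there is no further factorization of the $g_i$ to exploit; what must be shown is that $g_i \circ f^n$ stays irreducible, and that requires ruling out $p$th powers among the values $g_i(f^n(0))$, which is exactly what your valuation argument fails to do in these subcases. Your parallel $p = 2$ suggestion (pass to an odd prime dividing some $f^n(0)$) has the analogous problem: one would still need that odd prime to occur to odd multiplicity, which is not automatic.

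The paper closes this gap differently: Lemma \ref{zcase} proves, by elementary two-sided inequalities on the norm $N_{\mathbb{Q}(\zeta_p)/\mathbb{Q}}(f^{n-1}(0) + r\zeta_p)$, that this norm is strictly sandwiched between consecutive $p$th powers for all $n \geq 2$ and all $r > 1$, and treats the remaining edge case $r = 1$ (i.e.\ $c = \pm 1$) by the separate Lemma \ref{heightlem} showing $1 + \zeta_p$ is not a $p$th power. This works for every $c = r^p$ with no condition on the arithmetic of $r$, and settles the factorization at $j \leq 1$ once and for all (Corollary \ref{zfactors}). You would need an argument of that kind, or some other device not depending on the $\mathfrak{p}$-adic valuation alone, to complete the case $c = \pm p^k$ with $p \mid k$ or $k = 0$.
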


Note that if $q \not\equiv 1 \bmod{p}$, then $p \nmid \#({\mathbb{Z}} / q{\mathbb{Z}})^*$, and thus $z \mapsto z^p$ is a one-to-one map on ${\mathbb{Z}} / q{\mathbb{Z}}$.  Hence $f(z) = z^p + c$ acts as a permutation on ${\mathbb{Z}} / q{\mathbb{Z}}$, and so every element of ${\mathbb{Z}} / q{\mathbb{Z}}$ is periodic under iteration of $f$.  In particular, $f^n(0) \equiv 0 \bmod{q}$ for some $n \geq 1$, and hence the density of primes in ${\mathbb{Z}}$ dividing at least one element of $O_f(0)$, which we denote $D_{\mathbb{Q}}(O_f(0))$, is at least $(p-2)/(p-1)$. This phenomenon is noted in \cite{periodsmoduloprimes} for the special case $f(z) = z^3 + 1$, where it is used  to show that $0$ may be periodic modulo a positive proportion of primes even though it is not periodic over ${\mathbb{Z}}$. Corollary \ref{maincor2} shows that in fact $D_{\mathbb{Q}}(O_f(0)) = (p-2)/(p-1)$, in particular giving $D_{\mathbb{Q}}(O_{z^3 + 1}(0)) = 1/2$. A natural extension of these considerations is to allow our initial point to be $a_0 \neq 0$. In this case Corollary \ref{maincor2} gives only $D_{\mathbb{Q}}(O_f(a_0)) \leq (p-2)/(p-1)$. In seems reasonable to expect that $D_{\mathbb{Q}}(O_f(a_0)) = 0$, but at present this appears quite difficult to prove. 

We prove Corollary \ref{maincor2} by applying condition (2) of Theorem \ref{main1}, with $j = 0$ or $j = 1$ according to whether $c$ is a $p$th power in ${\mathbb{Z}}$. See Lemma \ref{zcase}, where we verify that (2) applies in this case. In the process, we show that if $p$ is odd and $c$ is not a $p$th power in ${\mathbb{Z}}$, then $f^n(z)$ is irreducible over ${\mathbb{Q}}(\zeta_p)$ for all $n \geq 1$, and if $c$ is a $p$th power, then $f^n(z)$ has precisely $p$ irreducible factors over ${\mathbb{Q}}(\zeta_p)$  (and two irreducible factors over ${\mathbb{Q}}$) for all $n \geq 1$. This generalizes \cite[Proposition 4.5]{quaddiv}, and establishes additional cases of a conjecture of Sookdeo, namely that there are only finitely many $S$-integral points in the set $\bigcup_{n \geq 1} f^{-n}(0)$ (see \cite[Conjecture 1.2, Theorems 2.5, 2.6]{sookdeo}). 

A key ingredient in our proof of Theorem \ref{main1} is a new result giving conditions on $c$ that ensure the number of irreducible factors of $f^n(z)$ is absolutely bounded as $n$ grows. This phenomenon -- called \textit{eventual stability} -- is central to the study of arithmetic aspects of polynomial dynamics, and has attracted significant study, for instance in \cite{ostafe}, \cite{ingram}, \cite{quaddiv}, and \cite{sookdeo} (a large amount of additional work has gone into finding conditions ensuring that all iterates of $f$ are irreducible; see for example \cite{danielson}). Even over ${\mathbb{Q}}$, complicated behavior is possible; for instance, if $f(z) =  z^2-\frac{16}{9}$, then
\[f^3(z)  =  \left(z^2-2z+\frac{2}{9}\right) \left(z^2+2z+\frac{2}{9}\right) \left(z^2-\frac{22}{9}\right) \left(z^2-\frac{10}{9}\right).\]
However, for $n \geq 3$, $f^n(z)$ has precisely four irreducible factors over ${\mathbb{Q}}$ (see the remark on p. \pageref{evstabremark}).

\begin{definition} We say a polynomial $f$ is \emph{eventually stable} if there is an $N\geq 0$ and a fixed $t$ depending only on $f$ such that, for all $n>N$, $f^n$ is a product of exactly $t$ irreducible factors.
\end{definition} 

\begin{theorem} \label{evstab}
Let $d \geq 2$, let $K$ be a field of characteristic not dividing $d$, and let $f(z) = z^d + c \in K[z]$.  If there is a discrete non-archimedean absolute value on $K$ with $|c| < 1$, then $f$ is eventually stable over $K$. 
\end{theorem}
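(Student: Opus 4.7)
The plan is to pass to the completion $K_v$ of $K$ at $v$ and bound the number of irreducible factors of $f^n$ over $K_v$; since the factorization of $f^n$ over $K$ refines to at least as many factors over $K_v$, such a uniform bound descends to one over $K$. Write $k := v(c) > 0$ and extend $v$ uniquely to $\overline{K_v}$. The engine of the proof is a short induction proving that every root $\alpha$ of $f^n$ in $\overline{K_v}$ satisfies $v(\alpha) = k/d^n$. For $n=1$, the relation $\alpha^d = -c$ gives $v(\alpha)=k/d$. For the inductive step, if $\beta := f(\alpha)$ is a root of $f^n$, then the hypothesis gives $v(\beta) = k/d^n < k = v(c)$, so by the ultrametric inequality $v(\alpha^d) = v(\beta - c) = v(\beta)$, and therefore $v(\alpha) = k/d^{n+1}$.

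Now let $g$ be an irreducible factor of $f^n$ over $K_v$ with root $\alpha$, and let $e$ denote the ramification index of $K_v(\alpha)/K_v$. Since the extended valuation on $K_v(\alpha)$ takes values in $\tfrac{1}{e}\mathbb{Z}$, the equality $v(\alpha) = k/d^n$ forces $d^n \mid ek$, hence $e \geq d^n/\gcd(d^n,k)$. The sequence $\gcd(d^n, k)$ is non-decreasing in $n$ and bounded above by $k$, so it stabilizes to $k_0 := \prod_{p \mid d} p^{v_p(k)}$ for all sufficiently large $n$. For such $n$, every irreducible factor $g$ of $f^n$ over $K_v$ satisfies $\deg g \geq e \geq d^n/k_0$; summing over factors and using $\deg f^n = d^n$, it follows that $f^n$ has at most $k_0$ irreducible factors over $K_v$, and hence at most $k_0$ over $K$.

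To upgrade this uniform bound to genuine stabilization, observe that the number of irreducible factors of $f^n$ over $K$ is non-decreasing in $n$: writing $f^n = g_1 \cdots g_s$ with each $g_i$ irreducible over $K$, we have $f^{n+1}(z) = \prod_i g_i(f(z))$, and the $g_i(f(z))$ cannot share irreducible factors over $K$ because distinct $g_i$ are coprime in $K[z]$. A non-decreasing sequence of positive integers bounded above must stabilize, so there exist $N$ and $t \leq k_0$ such that $f^n$ has exactly $t$ irreducible factors over $K$ for all $n > N$. I expect no serious obstacle in executing this plan; the delicate step is the inductive valuation computation (where one must use that $k/d^n < k$ for $n \geq 1$, which is where $d \geq 2$ enters), while the ramification bound and monotonicity argument are routine. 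The hypothesis $\operatorname{char} K \nmid d$ is needed only to ensure separability of $f^n$, so that the count of irreducible factors is unambiguous; this follows because $(f^n)'(z) = d^n \prod_{i=0}^{n-1}(f^i(z))^{d-1}$, whose roots have valuation $\geq k/d^{n-1} > k/d^n$.
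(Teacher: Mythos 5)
Your proof is correct, and it takes a genuinely different route from the one in the paper. The paper first passes to $K(\zeta_d)$, invokes Theorem \ref{splitting} to deduce that each new splitting in the factorization forest forces a relation $g_{i-1}(f^{n_i}(0)) = \pm g_i(0)^{m}$ with $m > 1$, and then uses the rigid divisibility property of $\{f^n(0)\}$ (Lemma \ref{rds1}) to produce a strictly increasing chain $|c| < |g_1(0)| < |g_2(0)| < \cdots < 1$, which must terminate since $|\cdot|$ is discrete. Your argument instead works directly in the completion $K_v$: the Newton-polygon computation $v(\alpha)=k/d^n$ for roots of $f^n$, combined with the fact that the value group of $K_v(\alpha)$ is $\tfrac{1}{e}\mathbb{Z}$, forces the ramification index (hence the degree) of each irreducible local factor to be at least $d^n/\gcd(d^n,k)$, giving the uniform bound of $k_0 = \gcd(d^\infty, k)$ on the number of factors; monotonicity (via coprimality of the $g_i \circ f$) then upgrades the bound to stabilization. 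Your route is essentially the one the paper alludes to in the remark following the proof of Theorem \ref{mart1} (``This gives an alternate proof of Theorem \ref{evstab}''), but you carry it out in full and, notably, without ever adjoining a primitive $d$th root of unity, whereas the paper's published proof does replace $K$ by $K(\zeta_d)$. Your approach also yields a slightly cleaner quantitative bound ($i_f \leq k_0 \leq v(c)$, versus the paper's $d^{\log_2 v(c)}$ in Remark \ref{evstabremark}), and the separability check via the explicit formula for $(f^n)'$ and the valuation gap $k/d^{n-1} > k/d^n$ is a tidy touch. The only thing I would tighten in exposition is the phrase ``for all sufficiently large $n$'' in the stabilization of $\gcd(d^n,k)$: in fact $\gcd(d^n,k) \leq k_0$ for every $n$, so the bound $s \leq k_0$ holds uniformly from $n=1$ and you do not strictly need to wait for the gcd to stabilize — but this does not affect correctness.
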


Theorem \ref{evstab} immediately yields the following corollary in the case $K = {\mathbb{Q}}$, giving another generalization of \cite[Proposition 4.5]{quaddiv} and proving the corresponding cases of Conjecture 1.2 in \cite{sookdeo}. In \cite{ingram}, Ingram proves an eventual stability-type result for polynomials over a number field, though one that is disjoint from Theorem \ref{evstab}. His methods are quite different from ours; see the discussion on p. \pageref{ingramdisc}. 

\begin{corollary} \label{evstabcor2}
Let $f(z) = z^d + c \in {\mathbb{Q}}[z]$, and suppose that $c$ is non-zero and is not the reciprocal of an integer. Then $f$ is eventually stable over ${\mathbb{Q}}$. 
\end{corollary}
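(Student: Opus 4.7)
The plan is to reduce this directly to Theorem \ref{evstab}. Since $K = \mathbb{Q}$ has characteristic zero, the hypothesis that $\mathrm{char}(K) \nmid d$ is automatic, so it only remains to exhibit a discrete non-archimedean absolute value on $\mathbb{Q}$ at which $|c| < 1$.

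First I would write $c = a/b$ in lowest terms, with $b \geq 1$ and $\gcd(a,b) = 1$. The condition $c \neq 0$ gives $a \neq 0$, and the condition that $c$ is not the reciprocal of an integer translates precisely to $a \neq \pm 1$, i.e., $|a| \geq 2$. Hence $a$ has at least one prime divisor $p$. Because $\gcd(a,b) = 1$, we have $p \nmid b$, so the $p$-adic valuation satisfies $v_p(c) = v_p(a) - v_p(b) = v_p(a) \geq 1 > 0$, whence $|c|_p = p^{-v_p(c)} < 1$.

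The $p$-adic absolute value $|\cdot|_p$ is a discrete non-archimedean absolute value on $\mathbb{Q}$, so Theorem \ref{evstab} applies to $f(z) = z^d + c$ and yields that $f$ is eventually stable over $\mathbb{Q}$.

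There is essentially no obstacle here: the only thing to verify is the elementary translation between the condition ``$c$ is not the reciprocal of an integer'' and the existence of a prime $p$ with $v_p(c) > 0$. Note in particular that if $c \in \mathbb{Z}$ with $|c| \geq 2$ then any prime divisor of $c$ works, while if $c = a/b$ is a non-integer with $|a| \geq 2$ any prime divisor of the numerator works; only the cases $c = 0$ and $c = \pm 1/b$ fall outside the scope, and these are exactly the excluded ones.
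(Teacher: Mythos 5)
Your proof is correct and is exactly the argument the paper has in mind; the paper merely states that Theorem \ref{evstab} ``immediately yields'' the corollary and leaves the translation to you, which you have carried out accurately (the key point being that ``not the reciprocal of an integer'' is equivalent to the numerator of $c$ in lowest terms having a prime divisor, giving a prime $p$ with $v_p(c)>0$).
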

The case where $c$ is the reciprocal of an integer remains open.

Theorem \ref{evstab} also allows us to obtain nearly complete information in the function field case. By a function field, we mean here something more general than a global function field: a finite extension $K$ of $F(t)$, where $F$ is any field. Function fields share the properties of global function fields mentioned above \cite[Chapter 5]{Rosen}, and we thus obtain:
\begin{corollary} \label{evstabcor}
Let $K$ be a function field of characteristic not dividing $d$, and let $f(z) = z^d + c \in K[z]$. Then $f$ is eventually stable over $K$ unless $c$ belongs to the field of constants of $K$.
\end{corollary}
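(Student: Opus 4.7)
The plan is to reduce Corollary \ref{evstabcor} immediately to Theorem \ref{evstab}. The characteristic hypothesis is already in place, so the only thing I would need to produce, under the assumption that $c$ does not lie in the field of constants of $K$, is a single discrete non-archimedean absolute value on $K$ at which $|c| < 1$.

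First I would unpack the hypothesis using transcendence-degree considerations. Because $K$ is finite over $F(t)$, it has transcendence degree one over $F$, and the field of constants is by definition the algebraic closure of $F$ in $K$. Hence ``$c$ not in the field of constants'' means precisely that $c$ is transcendental over $F$, so that $\{c\}$ is itself a transcendence basis of $K/F$ and $K$ is a finite algebraic extension of the rational function field $F(c)$.

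Next I would construct the valuation in two steps. On $F(c)\cong F(x)$ I would take the ``$c$-adic'' discrete valuation $v$ determined by $v(c) = 1$ and $v(a) = 0$ for polynomials in $c$ with nonzero constant term; this is non-archimedean and satisfies $|c|_v < 1$. Since $K/F(c)$ is finite, classical extension theory for discrete valuations (integral closure of the valuation ring of $v$ yields a semilocal Dedekind domain whose finitely many maximal ideals correspond to the extensions of $v$ to $K$) produces a discrete valuation $w$ on $K$ with $w(c) = e\, v(c) > 0$ for the appropriate ramification index $e$. The associated absolute value satisfies $|c|_w < 1$, and Theorem \ref{evstab} delivers eventual stability of $f$ over $K$.

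I do not expect a real obstacle. The only point that requires a moment's thought is recognizing that the finiteness of $K/F(t)$ forces $K/F(c)$ to be finite as soon as $c$ is transcendental over $F$, which is what lets a ready-made valuation on a rational function field be lifted to $K$ rather than having to be built on $K$ from scratch.
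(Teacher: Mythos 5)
Your proposal is correct, and it works, but it takes a genuinely different route from the paper's. The paper deduces Corollary \ref{evstabcor} from Theorem \ref{evstab} via the product formula \eqref{prodform} together with the characterization of the field of constants as precisely those $a \in K$ with $|a|_v = 1$ for every $v \in M_K$: if $c$ lies outside the field of constants then some $|c|_v \neq 1$, and the product formula forces some place with $|c|_v < 1$; since all places of a function field are discrete and non-archimedean, Theorem \ref{evstab} applies. Your argument instead observes that ``$c$ not in the field of constants'' is the same as ``$c$ transcendental over $F$,'' so that $K/F(c)$ is a finite extension, and then lifts the $c$-adic discrete valuation of $F(c)$ to $K$; the ramification index is finite so the lifted valuation is still discrete, and $|c| < 1$ is immediate. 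Your version is more constructive — it names an explicit place above $(c) \subset F[c]$ rather than arguing by contradiction from the product formula — and it sidesteps the product formula entirely, which is convenient here since the base field $F$ is arbitrary and the cited reference \cite{Rosen} treats only finite constant fields. The paper's version is shorter once the global-field toolkit has been invoked. One small point worth making explicit in a written-up version of your argument: the extension $K/F(c)$ need not be separable in positive characteristic, but discreteness of the lifted valuation still follows because the value group of any extension has index at most $[K:F(c)]$ over $\Z$, hence is again isomorphic to $\Z$.
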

When $c$ belongs to the field of constants of $K$, eventual stability need not hold. For one thing, we may have that $0$ is periodic under $f$, and hence $z \mid f^k(z)$ for some $k \geq 1$, implying that $f^{jk}(z)$ has at least $j+1$ irreducible factors. Even when $0$ is not periodic, we may not have eventual stability, particularly when the field of constants is finite. Indeed, we expect eventual stability to fail in general when $f$ is defined over a finite field, as predicted by the factorization model in \cite{settled}. An interesting example is given by $f(z) = z^2 + z + 2 \in \mathbb{F}_3[z]$, where $\mathbb{F}_3$ is the finite field with three elements. Here $0$ is not periodic under $f$, yet it can be shown that the number of distinct irreducible factors of $f^n$ is $\geq n - 1$ for all $n \geq 3$ \cite[Section 9]{ostafe}. Thus in a sense Corollary \ref{evstabcor} is best-possible in the case where $K$ is a function field over a finite field. 

We close this introduction with a sketch of the proof of Theorem \ref{main1}, which also serves as an outline for the article. We begin by showing that in both cases of Theorem \ref{main1}, there is $j \in {\mathbb{Z}}_{\geq 1}$ such that $f^j(z)=\prod_{i=1}^t g_i(z)$, with $g_i(f^n(z))$ irreducible for all $n \geq 0$. This follows from Theorems \ref{evstab} and \ref{firststab}, whose proofs are given in Section \ref{irred}. The irreducibility of the $g_i(f^n(z))$ plays a role in the results of Sections \ref{GP(f,g)} and especially those of Section \ref{max}. Now let $P_{f,g_i}(a_0)$ be the set of prime ideals ${\mathfrak{q}}$ of ${\mathcal{O}}_K$ such that ${\mathfrak{q}} \mid g_i(f^n(a_0))$ for at least one $n \geq 1$, and note that ${\mathfrak{q}}\in P_{f,g_i}(a_0)$ for some $1\leq i\leq t$ if and only if  ${\mathfrak{q}}\in P_{f}(a_0)$. In Section \ref{GP(f,g)}, we relate the density of $P_{f,g_i}(a_0)$ to Galois theory. Specifically, we recall from \cite{quaddiv} the definition of a Galois process, which furnishes an upper bound for the desired density, and in Theorems \ref{mart1} and \ref{mart2} we show that the Galois process associated to $(f, g_i)$ is an eventual martingale, and hence is a convergent stochastic process. In Section \ref{max} we use group theory and Diophantine methods (see Theorem \ref{inf many primitives}) to show that the convergence of the Galois process impiles the density of $P_{f,g_i}(a_0)$ is zero. Thus $P_{f}(a_0)$ is a finite union of zero-density sets, proving the theorem.

\section{Irreducibility Results} \label{irred}
In this section we examine irreducibility properties of polynomials of the form $g \circ f^{n}$ over a general field $K$, in the case where $f(z) = z^d + c$. 
Arithmetic properties of the \textit{translated critical orbit} $\{g(f^n(0)) : n \geq 1\}$ play a key role in this matter (see Theorem \ref{firststab}). In the event that $g \circ f^{n-1}$ is irreducible over $K$ but $g \circ f^{n}$ is not, and $K$ contains a primitive $m$th root of unity, we show that the factors of $g \circ f^{n}$ must all have a special form (Theorem \ref{splitting}). This leads to Theorem \ref{evstab}, whose proof we defer until the end of this section. We begin with a result giving arithmetic conditions on ${g(f^n(0)) : n \geq 1}$ that ensure $g \circ f^{n}$ is irreducible for all $n \geq 1$. It is a generalization of \cite[Theorem 2.2]{itconst}, and also of \cite[Proposition 1]{danielson}.
 
\begin{theorem} \label{firststab}
Let $K$ be a field of characteristic not dividing $d$, let $g, f \in K[z]$ with $f(z) = z^d + c$, and $g(z)$ monic, irreducible, and separable. Suppose that for each $n \geq 1$ the following hold:
\begin{enumerate}
	\item[(1)] $(-1)^{\epsilon}g(f^n(0))$ is not a $p$th power in $K$ for any prime $p \mid d$; and
	\item[(2)] if $4 \mid d$, then $(-1)^{\epsilon+1}4g(f^n(0))$ is not a $4$th power in $K$,
\end{enumerate}
where $\epsilon = 1$ if $n = 1$, $d$ is even, and $\deg g$ is odd, and $\epsilon = 0$ otherwise.  
Then $g \circ f^{n}$ is irreducible and separable over $K$ for all $n \geq 1$. 
\end{theorem}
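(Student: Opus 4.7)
The plan is to induct on $n$, with base case $n = 0$ furnished directly by the hypothesis that $g = g \circ f^0$ is irreducible and separable. For the inductive step, assume $g \circ f^{n-1}$ is irreducible (and separable) over $K$ and fix a root $\beta$. Writing
\[
g(f^n(z)) = (g \circ f^{n-1})(z^d + c) = \prod_{\sigma}\bigl(z^d - (\sigma(\beta) - c)\bigr),
\]
where $\sigma$ ranges over the $K$-embeddings of $K(\beta)$, a standard degree argument reduces the whole problem to showing that the single factor $z^d - (\beta - c)$ is irreducible over $K(\beta)$: if that holds and $\alpha$ is one of its roots, then $[K(\alpha):K] = d \cdot d^{n-1}\deg g = \deg(g \circ f^n)$, forcing $g \circ f^n$ to be the minimal polynomial of $\alpha$.

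Next I would apply Capelli's Lemma to $z^d - (\beta - c)$ over $K(\beta)$, reducing irreducibility to the conditions $\beta - c \notin K(\beta)^p$ for every prime $p \mid d$, together with $-4(\beta - c) \notin K(\beta)^4$ when $4 \mid d$. The natural tool is that a $p$th power in $K(\beta)$ must have $p$th power norm down to $K$, so it suffices to verify that the respective norms are not powers in $K$. Using the inductive hypothesis that $g \circ f^{n-1}$ is the minimal polynomial of $\beta$, together with the identity $f^{n-1}(c) = f^{n-1}(f(0)) = f^n(0)$, a direct computation yields
\[
N_{K(\beta)/K}(\beta - c) = (-1)^{d^{n-1}\deg g}\,g(f^n(0)), \qquad N_{K(\beta)/K}\bigl(-4(\beta - c)\bigr) = 4^{d^{n-1}\deg g}\,g(f^n(0)).
\]

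It then remains to match these norm conditions with hypotheses (1) and (2). For odd $p \mid d$, $(-1)^{d^{n-1}\deg g}$ is itself a $p$th power, so non-$p$-powerness reduces directly to condition (1). For $p = 2$ (which forces $d$ even), the sign is trivial unless $n = 1$ and $\deg g$ is odd---exactly the case in which $\epsilon = 1$---so condition (1) again captures the right statement. For the $4 \mid d$ requirement, the exponent $d^{n-1}\deg g$ is even in every case except $n = 1$ with $\deg g$ odd; when it is even, $4^{d^{n-1}\deg g}$ is already a 4th power, so non-4th-powerness of the norm follows for free from condition (1) (a non-square is not a 4th power), while in the exceptional case the norm becomes $4g(f(0))$ times a 4th power, so non-4th-powerness amounts to condition (2) with $\epsilon = 1$. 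Separability of $g \circ f^n$ is then automatic: its derivative $g'(f^n(z))(f^n)'(z)$ is nonzero as a polynomial because $g$ is separable and $\char K \nmid d$, so once irreducibility is in hand, separability follows. The main obstacle is really the bookkeeping in this case analysis---tracking the parity of $d^{n-1}\deg g$ carefully enough to see precisely how the sign $\epsilon$ arises and how the $4 \mid d$ condition collapses exactly to what is stated in (2).
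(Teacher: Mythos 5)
Your proof is correct and follows essentially the same strategy as the paper's: induct on $n$, reduce to showing $z^d - (\beta - c)$ is irreducible over $K(\beta)$ for a root $\beta$ of $g\circ f^{n-1}$, invoke Capelli's criterion (Lang, Theorem~9.1), and verify it by computing $N_{K(\beta)/K}(\beta - c) = (-1)^{d^{n-1}\deg g}\,g(f^n(0))$. Your parity analysis of the $4\mid d$ clause is slightly sharper than what the paper writes out --- you correctly note that when $d^{n-1}\deg g$ is even, the factor $4^{d^{n-1}\deg g}$ is already a 4th power so condition (1) with $p=2$ subsumes the Capelli 4th-power requirement, making condition (2) strictly necessary only in the $\epsilon = 1$ case --- and your separability argument via the nonvanishing of $(g\circ f^n)' = g'(f^n)\cdot(f^n)'$ is a clean alternative to the paper's fiber-by-fiber verification that $c - \alpha \neq 0$.
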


\begin{remark}
In the case where $K$ is finite, one can show the theorem is if and only if. See the similar statement in 
\cite[Theorem 2.2]{itconst}. We also note that in the case where $K$ contains a primitive $d$th root of unity, the theorem holds without assuming condition (2), and one obtains a proof via taking $z = 0$ in the statement of Theorem \ref{splitting}.
\end{remark}

\begin{proof}
Let $N \geq 0$, and assume inductively that $g \circ f^{N}$ is irreducible and separable. Recall that $f^0(z) = z$, so the assumption that $g(z)$ is irreducible and separable takes care of the base case of induction.

Let $\beta$ be a root of $g \circ f^{N+1}$, and note that $\alpha:= f(\beta)$ is a root of $g \circ f^{N}$. Clearly $K(\beta)\supseteq K(\alpha)$. Now $g \circ f^{N+1}$ is irreducible if and only if 
$[K(\beta): K] = \deg(g(f^{N+1}(z)))$. However, because $g \circ f^{N}$ is irreducible, this holds if and only if 
$[K(\beta) : K(\alpha)] = d$, or in other words if $f(z) - \alpha$ is irreducible over $K(\alpha)$. Note that 
$f(z) - \alpha = z^d + c - \alpha$, and by \cite[Theorem 9.1, p. 297]{langalg} this is irreducible over $K(\alpha)$ provided that $\alpha - c$ is not a $p$th power in $K(\alpha)$ for any $p \mid d$ and, if $4 \mid d$, $-4(\alpha - c)$ is not a fourth power in $K(\alpha)$. 

We now compute:
\begin{align} 
\nonumber  N_{K(\alpha)/K}(\alpha - c) & = \prod_{g(f^{N}(\alpha')) = 0} -(c - \alpha') \\ 
\nonumber & = (-1)^{\deg g(f^N(z))} g(f^{N}(c)) \\
& = (-1)^{\deg g(f^N(z))} g(f^{N+1}(0)). \label{fin} 
\end{align} 

The first equality follows since $g \circ f^{N}$ is separable and irreducible, and thus the norm over $K$ of an element of $K(\alpha)$ is the product of its Galois conjugates, and every root of $g \circ f^{N}$ is a Galois conjugate of $\alpha$. The second equality follows because $g \circ f^{N}$ is monic.

If $d$ is odd, then $-1$ is a $p$th power in $K$ for each prime $p \mid d$, and hence condition (1) with $\epsilon = 0$ implies the expression in \eqref{fin} is not a $p$th power in $K$. The multiplicativity of the norm map then gives that $\alpha - c$ is not a $p$th power in $K(\alpha)$, showing that $g \circ f^{N+1}$ is irreducible over $K$. If $d$ is even and $\deg (g \circ f^{N})$ is even, then similarly to the case where $d$ is odd, conditions (1) and (2) with $\epsilon = 0$ give the irreducibility of $g \circ f^{N+1}$ over $K$. When $d$ is even, $\deg(g \circ f^{N})$ can only be odd when $\deg g$ is odd and $N = 0$, in which case we require $\epsilon = 1$ in conditions (1) and (2) to ensure the irreducibility of $g \circ f^{N+1}$. 

Finally, each root of $g \circ f^{N+1}$ is a root of $z^d + c - \alpha$ for some root $\alpha$ of $g \circ f^{N}$. Because $K$ has characteristic not dividing $d$, it follows that $z^d + c - \alpha$ is separable provided that $c - \alpha \neq 0$. But the latter is impossible since otherwise \eqref{fin} gives $(-1)^{\epsilon} g(f^{N+1}(0)) = 0$, contrary to the hypothesis of the lemma. But $g \circ f^{N}$ is separable by inductive hypotheses, and this shows $g \circ f^{N+1}$ is separable.
\end{proof}

We now embark on a sequence of results that leads to the proof of Theorem \ref{evstab}.

\begin{lemma}\label{Capelli Part} Let $d \geq 2$, let $L$ be a field of characteristic not dividing $d$, and let $\zeta_d \in L$ be a primitive $d$th root of unity. Suppose that $a \in L$, $a \neq 0$, and let $E$ be the splitting field of $z^d -a$ over $L$. Then every orbit of the action of ${\rm Gal\,}(E/L)$ on the roots of $z^d - a$ has the form
\begin{equation} \label{orbit}
	\{\zeta_d^{rm} \beta : r = 1, \ldots, d/m\}
\end{equation}
for some $m \mid d$, where $\beta$ may be taken to be any element of the orbit.
\end{lemma}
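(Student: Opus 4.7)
The plan is a direct application of Kummer-theoretic reasoning, using the fact that $\zeta_d \in L$ collapses the Galois action on $\{z^d = a\}$ into multiplication by $d$th roots of unity.

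First I would fix an arbitrary root $\beta_0$ of $z^d - a$, so that the full set of roots is $\{\zeta_d^{\,j} \beta_0 : 0 \leq j \leq d-1\}$, and $E = L(\beta_0)$ because $\zeta_d$ is already in $L$. For each $\sigma \in G := \operatorname{Gal}(E/L)$, the element $\sigma(\beta_0)$ is again a root, so there exists a unique $k(\sigma) \in \Z/d\Z$ with $\sigma(\beta_0) = \zeta_d^{\,k(\sigma)} \beta_0$. Since $\sigma$ fixes $\zeta_d$, a short calculation gives $\sigma(\zeta_d^{\,j}\beta_0) = \zeta_d^{\,j + k(\sigma)}\beta_0$, and from $\sigma\tau(\beta_0) = \sigma(\zeta_d^{\,k(\tau)}\beta_0) = \zeta_d^{\,k(\sigma)+k(\tau)}\beta_0$ we deduce that $k : G \to \Z/d\Z$ is a group homomorphism. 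Injectivity is immediate, because $\sigma$ is determined by its action on $\beta_0$ (and on $\zeta_d$, which it fixes).

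Next I would invoke the classification of subgroups of the cyclic group $\Z/d\Z$: the image $H := k(G)$ is $\langle m \rangle = \{0, m, 2m, \ldots, (d/m-1)m\}$ for a unique divisor $m$ of $d$. Consequently, for any root $\beta = \zeta_d^{\,j}\beta_0$, the $G$-orbit of $\beta$ is
\[
\{\sigma(\beta) : \sigma \in G\} = \{\zeta_d^{\,j + rm}\beta_0 : 0 \leq r \leq d/m - 1\} = \{\zeta_d^{\,rm}\beta : r = 1, \ldots, d/m\},
\]
where in the last step I reindex using $\zeta_d^{\,d} = 1$. This is exactly the form claimed, and the same $m$ works for every orbit (the orbits are the cosets of $H$, of which there are $m$, each of size $d/m$).

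There is really no obstacle here: the only thing to be careful about is keeping track of the bijection between subgroups of $\Z/d\Z$ and divisors of $d$ (so that the parameter $m$ in the statement matches the generator of the subgroup, not its index), and the fact that the orbit description is independent of which representative $\beta$ of the orbit is chosen, which follows automatically from the left-multiplication structure of cosets.
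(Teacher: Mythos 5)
Your proof is correct, but it takes a genuinely different route than the paper's. You encode the Galois action by the injective homomorphism $k : G \hookrightarrow \mathbb{Z}/d\mathbb{Z}$ defined by $\sigma(\beta_0) = \zeta_d^{\,k(\sigma)}\beta_0$, identify the image as a subgroup $\langle m\rangle$ with $m \mid d$, and then read off the orbits as the cosets of that subgroup. The paper instead works orbit-by-orbit at the level of polynomials: for a chosen $\beta$ in the orbit, it finds the least $k$ with $\beta^k \in L$, shows $k \mid d$ by a division-algorithm argument, and then verifies that $z^k - \beta^k$ is irreducible over $L$ with roots exactly $\{\zeta_d^{rm}\beta\}$ for $m = d/k$. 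Both routes are Kummer theory in disguise (your $d/|k(G)|$ is the paper's $m$), but yours gives the global statement — that $G$ itself is cyclic of order $d/m$ and that the same $m$ governs every orbit simultaneously — for free, whereas the paper's argument is more self-contained (not even needing the word ``homomorphism'') and hands you the irreducible factor $z^k - \beta^k$ explicitly, which is the form in which the result is then reused in the proof of Lemma \ref{Kummer ramification}. One small thing you leave tacit: you should note that $z^d - a$ is separable (from $\operatorname{char}(L)\nmid d$ and $a \neq 0$) before asserting that its roots are the $d$ distinct elements $\zeta_d^j\beta_0$; the paper records this explicitly.
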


\begin{proof}
Note that ${\rm char}(L) \nmid d$ and $a \neq 0$ ensure that $z^d - a$ is separable over $L$, and hence if $\beta_0$ is any root of $z^d - a$, then the full set of roots is $\{\beta_0, \zeta_d\beta_0, \ldots, \zeta_d^{d-1}\beta_0\}$. Consider an orbit $O$ of the action of ${\rm Gal\,}(E/L)$, and choose some $\beta \in O$. Let $k$ be the least positive integer such that $\beta^k\in L$. Certainly, $k\leq d$.  We claim that $k$ is a divisor of $d$.  Let $m\in\mathbb{Z}$ be such that $0\leq d-mk<k$.  Then, as $\beta^d=\beta^{mk}\beta^{d-mk}$, it must be that $d-mk=0$, because $\beta^{d-mk}\in L$, but $d-mk<k$. 

Let $m=\frac{d}{k}$, and put
\[s(z) = \prod_{r = 1}^{d/m} \left(z - (\zeta_d^m)^r\beta \right) = z^k - \beta^k \in L[z].\]
If $s(z)$ has a non-trivial factor $t(z)$ over $L[z]$, then $(\zeta_d^m)^u \beta^v = t(0) \in L$ for some integer $u$ and
some $0 < v < k$.  Hence $\beta^v \in L$, contradicting the minimality of $k$. Thus $s(z)$ is irreducible over $L$, proving that $O$ has the form \eqref{orbit}. 
\end{proof}

\begin{theorem}\label{splitting}
Let $d \geq 2$, let $L$ be a field of characteristic not dividing $d$, and let $\zeta_d \in L$ be a primitive $d$th root of unity.  Let $f(z)=z^d+c \in L[z]$ and let $g(z) \in L[z]$ be monic and separable. Take $f^0(z) = z$, and suppose that $g \circ f^{n-1}$ is irreducible over $L$ for some $n \geq 1$. If $g \circ f^{n}$ has a non-trivial factorization over $L$, then we have
\begin{equation} \label{fact}
	g(f^n(z)) = (-1)^{\epsilon} \prod_{k=1}^{m} h(\zeta_d^kz),
\end{equation}
where $h(z) \in L[z]$ is irreducible, $m \mid d$, $m \geq 2$, $\epsilon = 1$ if $\deg(g \circ f^{n-1})$ is odd and $m$ is even, and $\epsilon = 0$ otherwise. 
\end{theorem}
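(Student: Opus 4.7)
The plan is to relate the factorization of $g \circ f^n$ over $L$ to that of $z^d - (\alpha - c)$ over $L' := L(\alpha)$, where $\alpha$ is a fixed root of $g \circ f^{n-1}$, via Lemma~\ref{Capelli Part}. Let $M$ be a splitting field of $g \circ f^n$ over $L$, and set $G = \mathrm{Gal}(M/L)$ and $H = \mathrm{Gal}(M/L')$. Irreducibility of $g \circ f^{n-1}$ gives $[L':L] = r := \deg(g \circ f^{n-1})$ and makes $G$ act transitively on the roots $\alpha = \alpha_1, \ldots, \alpha_r$ of $g \circ f^{n-1}$. Fix a root $\beta_0 \in M$ of $z^d - (\alpha - c)$; because $\zeta_d \in L \subseteq L'$, the full zero set is $S = \{\zeta_d^j \beta_0 : 0 \le j < d\} = f^{-1}(\alpha)$, whose elements are roots of $g \circ f^n$.

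First I would apply Lemma~\ref{Capelli Part} to $z^d - (\alpha - c) \in L'[z]$: each $H$-orbit on $S$ has the form $\{\zeta_d^{r m_j} \gamma : 1 \le r \le d/m_j\}$ for some divisor $m_j$ of $d$ and representative $\gamma$. I claim all the $m_j$ agree with a common $m \mid d$. Indeed, if $\beta_0^{d/m_0} \in L'$, then since $\zeta_d \in L'$ we have $(\zeta_d^j \beta_0)^{d/m_0} = \zeta_d^{j d/m_0} \beta_0^{d/m_0} \in L'$ for every $j$, which forces the minimal positive integer $k_j$ with $(\zeta_d^j \beta_0)^{k_j} \in L'$ to divide $d/m_0$; the symmetric argument gives equality, so $k_j = d/m_0 =: d/m$ throughout. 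Hence $S$ partitions into exactly $m$ $H$-orbits, each of size $d/m$.

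Next, since $G$ is transitive on $\{\alpha_1, \ldots, \alpha_r\}$ with stabilizer $H$ at $\alpha$, the fibers $f^{-1}(\alpha_i)$ are permuted transitively by $G$, and orbit--stabilizer yields a bijection between $G$-orbits on the roots of $g \circ f^n$ and $H$-orbits on $f^{-1}(\alpha) = S$. Thus $g \circ f^n$ has exactly $m$ monic irreducible factors over $L$, each of degree $D := rd/m$, and the non-trivial factorization hypothesis forces $m \ge 2$. Let $h(z) \in L[z]$ be the monic irreducible factor having $\beta_0$ as a root. For each integer $k$ the polynomial $h(\zeta_d^k z) \in L[z]$ has leading coefficient $\zeta_d^{kD}$ and root set $\zeta_d^{-k} \cdot \{\text{roots of } h\}$; since $\zeta_d \in L$ commutes with the $G$-action, this set is precisely the $G$-orbit of $\zeta_d^{-k}\beta_0$. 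As $k$ runs through $1, \ldots, m$, the residues of $-k$ modulo $m$ exhaust $\{0, 1, \ldots, m-1\}$, so $\zeta_d^{-k}\beta_0$ represents each of the $m$ $H$-orbits on $S$ exactly once; hence $h(\zeta_d^k z)$ ranges over all $m$ irreducible factors of $g \circ f^n$, each scaled by its leading coefficient.

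Multiplying gives $\prod_{k=1}^m h(\zeta_d^k z) = \zeta_d^{Dm(m+1)/2} \cdot g(f^n(z))$, reducing the theorem to showing $\zeta_d^{-Dm(m+1)/2} = (-1)^\epsilon$. Using $Dm = rd$, the exponent is $-rd(m+1)/2$: when $m$ is odd, $(m+1)/2 \in \Z$ and the resulting power of $\zeta_d$ is $1$ (so $\epsilon = 0$); when $m$ is even, $d$ is also even, so $\zeta_d^{d/2} = -1$ and the power equals $(-1)^{-r(m+1)} = (-1)^r$ since $m+1$ is odd (so $\epsilon \equiv r \bmod 2$). This matches the theorem's definition of $\epsilon$. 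The main obstacle I anticipate is establishing the uniformity of $m$ across the $H$-orbits in the second step; once that is in place, what remains is a clean combination of Galois theory with careful tracking of leading coefficients.
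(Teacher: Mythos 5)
Your proof is correct and follows essentially the same route as the paper: both invoke Lemma~\ref{Capelli Part} to describe the orbits on the fiber $f^{-1}(\alpha)$, take $h$ to be the minimal polynomial of a chosen $\beta_0$, and compute $\prod_{k=1}^m h(\zeta_d^k z)$ by tracking leading coefficients to arrive at $\zeta_d^{\deg(g\circ f^{n-1})\cdot d(m\pm1)/2}$. The only organizational difference is that you package the orbit count via a bijection between $G$-orbits on the roots of $g\circ f^n$ and $H$-orbits on $f^{-1}(\alpha)$ (together with an explicit check that all $H$-orbits share the same size $d/m$), whereas the paper decomposes the single $G$-orbit of $\beta$ directly as a union of coset translates $\sigma_i\bigl(\{s(\beta):s\in S\}\bigr)$; the content is the same.
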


\begin{proof}
Let $E$ be the splitting field of $g \circ f^{n}$ over $L$.  Let $G = {\rm Gal\,}(E/L)$, and let $\alpha_1, \ldots, \alpha_j$ be the roots of $g \circ f^{n-1}$ in $E$.  Consider the $G$-orbit $O(\beta)$ of a root $\beta$ of $g \circ f^{n}$. Without loss of generality, say $f(\beta) = \alpha_1$, and so $\beta$ is a root of $f(z) - \alpha_1 = z^d - (\alpha_1 - c)$.  Now $G$ has the subgroup $S := {\rm Gal\,}(E/L(\alpha_1))$.  Because $g \circ f^{n-1}$ is irreducible, the action of $G$ on the $\alpha_i$ is transitive, and hence we may choose $\sigma_1, \ldots, \sigma_j \in G$ such that $\sigma_i(\alpha_1) = \alpha_i$, or in other words $\sigma_i(\beta)$ is a root of $f(z) - \alpha_i$ for $i = 1, \ldots, j$. 

As $G_n$ is the disjoint union of the cosets $\sigma_iS$, so $O(\beta)$ is the disjoint union of the sets $\{\sigma_i s(\beta) : s \in S\}$.  Lemma \ref{Capelli Part} then gives
\[\{s(\beta) : s \in S\} = \{\zeta_d^{rm} \beta : r = 1, \ldots, d/m\}\]
for some divisor $d$ of $m$, and thus
\[\{\sigma_is(\beta) : s \in S\} = \{\zeta_d^{rm} \sigma_i(\beta) : r = 1, \ldots, d/m\},\]
We now put
\[h(z) := \prod_{i = 1}^j \prod_{r = 1}^{d/m} \left(z - \zeta_d^{rm} \sigma_i(\beta) \right),\]
which is an irreducible element of $L[z]$ since its roots consist of a full $G$-orbit.  Moreover, because $g \circ f^{n-1}$ is irreducible, every root of $g \circ f^{n}$ may be written
\begin{equation} \label{roots}
	\zeta_d^{rm - k} \sigma_i(\beta),
\end{equation}
for some $k$ with $1 \leq k \leq d$ and some $i$ with $1 \leq i \leq j$.  Note that 
\[h(\zeta_d^kz) = \prod_{i = 1}^j \prod_{r = 1}^{d/m} \left(\zeta_d^kz - \zeta_d^{rm} \sigma_i(\beta) \right) = \left(\zeta_d^{dj/m} \right)^k \prod_{i = 1}^j \prod_{r = 1}^{d/m} \left(z - \zeta_d^{rm - k} \sigma_i(\beta) \right).\]
Taking the product over $k = 1, \ldots, m$ and using \eqref{roots} gives
\[\prod_{k=1}^{m} h(\zeta_d^kz) = \left( \prod_{k = 1}^m \left(\zeta_d^{dj/m} \right)^k \right) g(f^n(z)) = \zeta_d^{dj(m-1)/2} g(f^n(z)),\]
where the first equality follows since $g \circ f^{n}$ is monic. Note that $j = \deg(g \circ f^{n-1})$. 
\end{proof}

\begin{definition}\label{RDS} 
Let $A=\{a_i\}_{i \geq 1}$ be a sequence in a field $K$. We say $A$ is a \emph{rigid divisibility sequence over $K$} if for each non-archimedean absolute value $|\cdot|$ on K, the following hold:
\begin{itemize}
	\item[(1)] If $|a_n| < 1$, then $|a_n| = |a_{kn}|$ for any $k \geq 1$.
	\item[(2)] If $|a_n| < 1$ and $|a_j| < 1$, then $|a_{\gcd(n, j)}| < 1$.
\end{itemize}
\end{definition}

Recall that when $A$ is a sequence of rational integers, it is a \textit{divisibility sequence} when $a_n \mid a_m$ whenever $n \mid m$; this condition is ensured by (and strictly weaker than) condition (1) in Definition \ref{RDS}. Additionally, it is a \textit{strong divisibility sequence} when $\gcd (a_n, a_m) = a_{\gcd(m,n)}$, which is ensured by condition (2). Hence every rigid divisibility sequence is also a strong divisibility sequence, though the converse is false. A consequence of Definition \ref{RDS} is that if $|a_n| < 1$ and $|a_k| < 1$ for some non-archimedean absolute value, then $|a_n| = |a_k|$. Rigid divisibility sequences arise naturally from iteration of certain polynomials, and they have proved useful in analyzing arithmetic phenomena such as primitive divisors \cite{doerksen, krieger, rice}.  In Lemma \ref{rds1} we generalize  \cite[Lemma 4]{doerksen}, \cite[Lemma 5.3]{quaddiv}, and \cite[Lemma 2.3]{krieger}, where consideration is restricted to $c$ belonging to ${\mathbb{Z}}$ or ${\mathbb{Q}}$.  

Recall that for a non-archimedean absolute value $|\cdot|$ on $K$, the set $\{x \in K : |x| \leq 1\}$ is a ring, and $\{x \in K : |x| < 1\}$ is its unique maximal ideal. The associated quotient field is called the residue field.  

\begin{lemma} \label{rds1} 
Let $K$ be a field and $f(z)=z^d+c\in K[z]$ for some $d\geq 2$.   Then $\{f^n(0)\}_{n \geq 1}$ is a rigid divisibility sequence over $K$.
\end{lemma}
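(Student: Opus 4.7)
The plan is to work one non-archimedean absolute value at a time. Fix such $|\cdot|$ on $K$, with associated valuation $v$ and valuation ring $\mathcal{O}=\{x\in K : v(x)\geq 0\}$. My first move would be to reduce to the case $c\in\mathcal{O}$. Indeed, if $v(c)<0$, then since $d\geq 2$ a quick induction gives $v(f^n(0))=d^{n-1}v(c)<0$ for every $n\geq 1$, so $|f^n(0)|>1$ always and both conditions in Definition \ref{RDS} have vacuous hypotheses. Assume henceforth that $c\in\mathcal{O}$, so $f\in\mathcal{O}[z]$ and every $f^n(0)\in\mathcal{O}$.

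Next I would establish by induction on $n$ the polynomial identity
\[ f^n(z)-f^n(0) = z^d R_n(z), \qquad R_n(z)\in\mathcal{O}[z]. \]
The base case is $f(z)-f(0)=z^d$, and for the inductive step I factor $f^{n+1}(z)-f^{n+1}(0)=f^n(z)^d-f^n(0)^d$ as $(f^n(z)-f^n(0))\cdot\sum_{i=0}^{d-1}f^n(z)^if^n(0)^{d-1-i}$ and invoke the hypothesis. Substituting $z=f^n(0)$ and using $f^{m+n}(0)=f^m(f^n(0))$ yields the key congruence
\[ f^{m+n}(0) = f^m(0) + f^n(0)^d\cdot R_m(f^n(0)), \]
from which $v(f^{m+n}(0)-f^m(0))\geq d\cdot v(f^n(0))$ whenever $v(f^n(0))\geq 0$.

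Condition (1) then falls out by induction on $k$: if $v(f^n(0))=e>0$, the inequality $d\geq 2$ gives $de>e$, and applying the congruence with $m=(k-1)n$ yields $v(f^{kn}(0)-f^{(k-1)n}(0))\geq de>e$, so the ultrametric inequality propagates $v(f^{kn}(0))=e$. For condition (2), I argue by strong induction on $\min(n,j)$ via the Euclidean algorithm: assume $n\geq j$ and write $n=qj+r$ with $0\leq r<j$. The case $r=0$ is immediate; otherwise part (1) gives $v(f^{qj}(0))=v(f^j(0))>0$, and the congruence applied to $(m,n)\mapsto(r,qj)$ yields $v(f^n(0)-f^r(0))\geq d\cdot v(f^j(0))>0$, which together with $v(f^n(0))>0$ forces $v(f^r(0))>0$. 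Since $\gcd(n,j)=\gcd(j,r)$, induction on the smaller pair $(j,r)$ completes the argument.

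The main subtlety I anticipate is controlling $v(R_m(f^n(0)))$: the entire argument hinges on the correction term $f^n(0)^d R_m(f^n(0))$ having strictly greater valuation than $f^n(0)$ itself, which requires both the strict inequality $d\geq 2$ and the integrality $R_m\in\mathcal{O}[z]$. The latter is precisely what forces the preliminary reduction to $c\in\mathcal{O}$; without that reduction the coefficients of $R_m$ could a priori have negative valuation and the ultrametric comparison would break down.
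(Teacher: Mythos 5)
Your proof is correct, and for property (1) it is essentially the same as the paper's: both hinge on the fact that $f^m(z) - f^m(0)$ is divisible by $z^d$ with quotient in $\mathcal{O}[z]$ (the paper writes $f^m(z) = f^m(0) + \sum_i c_i z^{di}$ with $c_i \in \mathbb{Z}[c]$; your $R_m$ is the same object, packaged as $z^d R_m(z)$). Where you genuinely diverge is property (2). The paper argues via the residue field: letting $m$ be the least index with $|f^m(0)| < 1$, the reduced orbit $\overline{0}, \overline{f(0)}, \overline{f^2(0)}, \ldots$ is periodic with exact period $m$, so $|f^k(0)| < 1$ if and only if $m \mid k$, from which $m \mid \gcd(n,j)$ is immediate. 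You instead run a Euclidean descent entirely inside $\mathcal{O}$: from $|f^n(0)| < 1$, $|f^j(0)| < 1$, and the congruence $f^{qj+r}(0) \equiv f^r(0) \pmod{f^{qj}(0)^d}$, you deduce $|f^r(0)| < 1$ and recurse on the pair $(j,r)$. The paper's residue-field argument is slicker and explains conceptually why the divisibility set is closed under gcd; your Euclidean version is more elementary and self-contained, reusing the same polynomial identity that drives property (1) rather than passing to the quotient. Your preliminary reduction to $c \in \mathcal{O}$ plays exactly the role of the paper's opening observation that either $|f^n(0)| > 1$ for all $n$ or $|f^n(0)| \leq 1$ for all $n$.
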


\begin{remark}
One can further generalize Lemma \ref{rds1} to the case where $f(z)$ has no linear term, but at the price of excluding certain absolute values of $K$ from the Definition \ref{RDS}. For instance, consider $f(z) = z^3 + (1/27)z^2 + 3$, and note that $\{f^n(0)\}_{n \geq 1}$ is a rigid divisibility sequence for all non-archimedean absolute values on ${\mathbb{Q}}$ except the $3$-adic absolute value.  The interested reader should also consult \cite[Proposition 3.5]{rice}, which gives a slightly stronger conclusion than that of Lemma \ref{rds1}. One can further generalize such a result to sequences of the form $f^n(\gamma) - \gamma$, where $\gamma$ is a critical point of $f(z)$.
\end{remark}

\begin{proof}  Let $|\cdot|$ be a non-archimedean absolute value on $K$. We begin with the observation that either $|f^n(0)| > 1$ for all $n \geq 1$ or $|f^n(0)| \leq 1$ for all $n \geq 1$. Indeed, assume that $|f^n(0)| > 1$ for some $n \geq 1$ and without loss let $n$ be minimal with this property. Write $c = f^n(0) - (f^{n-1}(0))^d$, taking $f^0(x) = x$ in the case $n = 1$. Then $|c| = |f^n(0)|$ by the ultrametric property. Therefore $|f(0)| = |c| > 1$, whence $n = 1$. We now have $|f^i(0)| = |f^{i-1}(0)|^m > |f^{i-1}(0)|$ for each $i \geq 2$, proving that $|f^n(0)| > 1$ for all $n \geq 1$. 

To prove property (1), suppose that $|f^n(0)| < 1$. By the previous paragraph, this gives $1 \geq |f(0)| = |c|$. We induct on $k$, noting first that if $k=1$, then trivially $|f^{kn}(0)| = |f^n(0)|$. Suppose that $|f^{(k-1)n}(0)| = |f^n(0)| < 1$. Write $f^n(z)=f^n(0)+\sum_{i=1}^{d^{n-1}} c_iz^{di}$, and note that the $c_i$ are elements of ${\mathbb{Z}}[c]$ and thus $|c_i| \leq 1$ because $|c| \leq 1$.  Observe that $f^{kn}(0)=f^n(f^{(k-1)n}(0))$ implies
\[|f^{kn}(0)| = \left| f^n(0)+\sum_{i=1}^{d^{n-1}} c_i\left(f^{(k-1)n}(0)\right)^{di}\right| = |f^n(0)|,\]
where the last equality follows from the fact that $|f^{(k-1)n}(0)|^{di} < |f^n(0)|$ for $i \geq 1$.  

To prove property (2), assume $|f^m(0)| < 1$ for some $m \geq 1$, and let $m$ be the minimal positive integer with this property. By the argument at the beginning of the proof of the lemma, $|f^n(0)| \leq 1$ for all $i$. Therefore the sequence $\overline{0}, \overline{f(0)}, \overline{f^2(0)}, \ldots$ in the residue field of $K$ is a cycle containing precisely $m$ distinct elements. Hence for any $j=\ell m+r$, $\overline{f^k(0)}$ will be zero if and only if $r=0$. So if both $|f^j(0)| < 1$ and $|f^n(0)| < 1$, then $m\mid\gcd(j,n)$, yielding property (2).
\end{proof}

\begin{proof}[{Proof of Theorem \ref{evstab}}]
We begin by choosing an extension of $|\cdot|$ to $K(\zeta_d)$; any such extension will still be discrete, since $K(\zeta_d)/K$ is a finite extension. We now replace $K$ by $K(\zeta_d)$, noting that Lemma \ref{rds1} shows that $\{f^n(0)\}_{n \geq 1}$ is still a rigid divisibility sequence over this larger field. As in the proof of Lemma \ref{rds1}, the assumption that $|c| < 1$ gives that all roots of iterates of $f$ have absolute value at most 1, and hence the same holds for the coefficients of any divisor of an iterate of $f$. 

Let $\{g_1,g_2,\dots\}$ be a (possibly finite) sequence of irreducible polynomials in $K[z]$ and $\{n_1, n_2, \ldots \}$ a sequence of positive integers with the following properties: $g_1$ properly divides $f^{n_1}$ while $f^{n_1-1}$ is irreducible, and for $i \geq 2$, $g_i$ properly divides $g_{i-1}\circ f^{n_i}$ while $g_{i-1}\circ f^{n_i-1}$ is irreducible.  To prove the theorem, we show that any such sequence must be finite.

By Theorem \ref{splitting}, we have $f^{n_1}(0) = \pm g_1(0)^d$ for some $d > 1$, and because $|f^{n_1}(0)| = |c|$ by Lemma \ref{rds1}, we have $|g_1(0)| = |c|^{1/d}$, showing that $1 > |g_1(0)| > |c|$. Assume that, for some $i \geq 2$, $g_i$ is defined and
\begin{equation} \label{chain}
1 > |g_{i-1}(0)| > \cdots > |g_1(0)| > |c|.
\end{equation}
By Theorem \ref{splitting}, $g_{i-1}(f^{n_i}(0)) = \pm g_i(0)^d$ for some $d > 1$, so that $1 > |g_i(0)| > |g_{i-1}(f^{n_i}(0))|.$  Now the coefficients of $g_i(x)$ are integral, while by Lemma \ref{rds1} and the inductive hypothesis we have $|f^{n_i}(0)| = |c| < |g_{i-1}(0)| < 1$. Therefore the sum $g_{i-1}(f^{n_i}(z))|_{z = 0}$ is dominated by the term $g_{i-1}(0)$, so $|g_{i-1}(f^{n_i}(0))| = |g_{i-1}(0)|$.  

We have thus shown that every element of the sequence $\{g_1, g_2, \ldots\}$ fits into a chain of the form \eqref{chain}. Because $|\cdot|$ is discrete, any such chain must have finite length, proving the theorem. 
\end{proof}

\begin{remark} \label{evstabremark}
Theorem \ref{evstab} in fact gives a quantitative result. Let $v$ be the normalized valuation associated to $|\cdot|$, so that $v(K^*) = {\mathbb{Z}}$ (see e.g. \cite[II.3]{neukirch}), and let $i_f$ be the limit as $n$ grows of the number of irreducible factors of $f^n(x)$.  If $v(c) = e$ (which by assumption is positive), then every sequence $\{g_1,g_2,\dots\}$ as in the proof of Theorem \ref{evstab} has length at most $\log_2 e$ (note much better bounds are possible for specific $d$). By Theorem \ref{splitting} we then have $i_f \leq d^{\log_2 e}$. It would be very interesting to have a uniform bound for $i_f$ for some given family $z^d + c, c \in K$. Some work has been done in this direction when $K = {\mathbb{Q}}$ and $f(z) = z^2 + c$; in \cite{faber-hutz} it is shown that no iterate of $f(z)$ has more than 6 linear factors over ${\mathbb{Q}}$, assuming certain standard conjectures on $L$-series. However, as noted in the discussion after Corollary \ref{evstabcor2}, eventual stability has not even been fully established for this family.  
\end{remark}

\section{The Galois Process and Related Results}\label{GP(f,g)}

In this section we connect the problem of determining the densities of sets of primes dividing orbits of $z^d+c$ in our main results to the Galois theory of iterates of $f$.  We
recall from \cite{quaddiv} the definition of the Galois process attached to a pair $(f, g)$ of polynomials. 

Let $K$ be a field, and let $f(z), g(z) \in K[z]$.  We fix an algebraic closure $\overline{K}$ of $K$ and let $T_n$ denote the set of roots of $g \circ f^{n}$ in $\overline{K}$, $K_n = K(T_n)$ be the splitting field of $g \circ f^{n}$, and $G_n = {\rm Gal\,}(K_n/K)$.  (We will use this notation for the remainder of the paper.)
Let $G_{\infty} =\varprojlim G_n$, and take $\mu$ to be a Haar measure on $G_{\infty}$ with $\mu(G_{\infty})=1.$  For $\sigma\in G_{\infty}$, let $\pi_n(\sigma)$ be the restriction of $\sigma$ to $G_n$. (For a more detailed exposition, see the remark in Section \ref{tree defs}.)
We are interested in how the proportion of elements of $G_n$ fixing at least one $\beta \in T_n$ varies with $n$. We define functions $Y_n : G_\infty \to {\mathbb{Z}}$ by 
\begin{equation} \label{gpdef}
Y_n(\sigma) = \#\{\text{fixed points of $\pi_n(\sigma)$ acting on $T_n$}\}.
\end{equation}

Because $\mu$ is a probability measure on $G_\infty$, the $Y_n$ are in fact random variables, and hence the sequence $Y_1, Y_2, \ldots$ is a stochastic process, which we refer to as the {\em Galois process} of $(f,g)$. We denote by $E(Y)$ the expected value of the random variable $Y$. Note that because $\mu(\pi_i^{-1}(S)) = \#S/\#G_i$ for any $S \subseteq G_i$, we have that $\mu(Y_1 = t_1, \ldots, Y_n = t_n)$ is given by 
\begin{equation} \label{fpchar}
	\frac{1}{\# G_n}\# \left\{\sigma \in G_n : \mbox{$\sigma$ fixes $t_i$ elements of $T_i$ for $i = 1,2, \ldots, n$} \right\}.
\end{equation}

The connection between the Galois process and our main results is given by \cite[Theorem 2.1]{quaddiv} and the remarks following.  We state here a version applicable to our present considerations:
\begin{theorem}{\cite[Theorem 2.1]{quaddiv}} \label{densecon}
Let $f,g\in  K[z]$ be polynomials with $g \circ f^{n}$ separable for all $n$. Let $a_n = g(f^n(a_0))$ with $a_0\in K$.  Then the density of primes dividing at least one $a_n$ is bounded above by 
\[\lim_{n \to \infty} \mu(Y_n > 0),\]
where $Y_n$ is the $n$th random variable in the Galois process of $f,g$. 
\end{theorem}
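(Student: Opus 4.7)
The plan is to link prime divisors of the sequence $a_n = g(f^n(a_0))$ to Frobenius conjugacy classes via the Chebotarev density theorem. The central observation is that if a prime ${\mathfrak{q}} \subset {\mathcal{O}}_K$ divides $a_m$, then for any $n \leq m$ the element $b := f^{m-n}(a_0) \in K$ satisfies $g(f^n(b)) = a_m \equiv 0 \pmod{{\mathfrak{q}}}$, so $g \circ f^n$ acquires the root $\bar{b}$ in the residue field at ${\mathfrak{q}}$. Excluding the finitely many primes of bad reduction --- those ramified in $K_n/K$, those dividing the leading coefficient or discriminant of $g \circ f^n$, and those appearing in the denominator of $a_0$ --- the Dedekind--Kummer factorization pattern of $g \circ f^n$ modulo ${\mathfrak{q}}$ matches the cycle structure of the Frobenius class in $G_n$ acting on $T_n$. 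A linear factor mod ${\mathfrak{q}}$ corresponds to a fixed point of the Frobenius, so the existence of a root mod ${\mathfrak{q}}$ is equivalent to the Frobenius class having a fixed point on $T_n$.

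Next I would fix $n$ and consider
\[P^{(n)} = \{{\mathfrak{q}} \subset {\mathcal{O}}_K : {\mathfrak{q}} \mid a_m \text{ for some } m \geq n\}.\]
Since $P \setminus P^{(n)}$ lies in the finite set of prime divisors of $a_0, \ldots, a_{n-1}$, we have $D(P) = D(P^{(n)})$. The preceding observation shows that, up to a finite (hence density-zero) exceptional set, $P^{(n)}$ is contained in
\[F_n := \{{\mathfrak{q}} : \text{the Frobenius class of ${\mathfrak{q}}$ in $G_n$ has a fixed point on $T_n$}\}.\]
Chebotarev then yields $D(F_n) = \#\{\sigma \in G_n : Y_n(\sigma) > 0\}/\#G_n$, which by \eqref{fpchar} equals $\mu(Y_n > 0)$. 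Therefore $D(P) \leq \mu(Y_n > 0)$ for every $n$.

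To finish I would pass to the limit. The events $\{Y_n > 0\} \subseteq G_\infty$ form a decreasing chain: if $\pi_n(\sigma)$ fixes $\beta \in T_n$, then $f(\beta) \in T_{n-1}$ and $\pi_{n-1}(\sigma)$ fixes $f(\beta)$, because $f \in K[z]$ has Galois-invariant coefficients. Hence $n \mapsto \mu(Y_n > 0)$ is monotone non-increasing, the limit exists, and taking the infimum yields $D(P) \leq \lim_{n \to \infty} \mu(Y_n > 0)$. The main point to handle carefully is the bookkeeping of exceptional primes: the ramified set grows with $n$, but for each \emph{fixed} $n$ it is finite and of density zero, which is exactly what is needed --- there is no need to control exceptional primes uniformly in $n$. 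The separability hypothesis on each $g \circ f^n$ is used precisely to ensure that $K_n/K$ is a finite separable Galois extension so that Chebotarev is applicable.
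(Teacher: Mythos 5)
Your proof is correct and reproduces the standard argument from \cite{quaddiv}: reduce to the observation that $\mathfrak{q} \mid a_m$ forces $g \circ f^n$ to have a root modulo $\mathfrak{q}$ for all $n \leq m$, translate this (away from finitely many bad primes) into Frobenius at $\mathfrak{q}$ having a fixed point on $T_n$ via Dedekind--Kummer, bound the density by Chebotarev, and pass to the limit using the nesting $\{Y_n > 0\} \subseteq \{Y_{n-1} > 0\}$. This is essentially the same route as the cited proof.
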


While \cite[Theorem 2.1]{quaddiv} is stated for $f, g \in {\mathbb{Z}}[z]$, it trivially extends to $f, g \in {\mathcal{O}}_K[z]$, and may be extended to $f, g \in K[z]$ by excluding the finitely many primes of ${\mathcal{O}}_K$ at which at least one coefficient of $f$ or $g$ has negative valuation. 

\begin{definition}
A stochastic process with probability measure $\mu$ and random variables $Y_1, Y_2, \ldots$ taking values in $\mathbb{R}$ is a {\em martingale} if for all $n \geq 2$ and any $t_i $, 
\[E(Y_n \mid Y_{1} = t_{1}, Y_2 = t_2, \ldots, Y_{n-1} = t_{n-1}) = t_{n-1},\]
provided $\mu(Y_{1} = t_{1}, Y_2 = t_2, \ldots, Y_{n-1} = t_{n-1}) > 0$.  We call $Y_1, Y_2, \ldots$ an {\em eventual martingale} if for some $N \geq 1$ the process $Y_N, Y_{N+1}, Y_{N+2}, \ldots$ is a martingale.
\end{definition}

We prove two main results in this section, namely:

\begin{theorem} \label{mart1}
Suppose that $d \geq 2$ and $K$ is a global field of characteristic not dividing $d$ and containing a primitive $d$th root of unity. Let $f(z) = z^d + c \in K[z]$, $g(z) \in K[z]$ divide an iterate of $f$, and suppose that there is a place ${\mathfrak{p}}$ of $K$ whose residue characteristic is prime to $d$ and such that $v_{\mathfrak{p}}(c) > 0$. Then the Galois process associated to $(f, g)$ is an eventual martingale. 
\end{theorem}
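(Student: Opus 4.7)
The plan is to reduce the martingale property to a Kummer-theoretic maximality statement and then verify the latter through a local ramification analysis at $\mathfrak{p}$. Since $\zeta_d \in K \subseteq K_{n-1}$, the extension $K_n/K_{n-1}$ is Kummer: choosing a $d$-th root $\beta_\alpha$ of $\alpha - c$ for each $\alpha \in T_{n-1}$ yields an embedding $H_n := {\rm Gal}(K_n/K_{n-1}) \hookrightarrow \prod_{\alpha} \langle \zeta_d \rangle$ via $\sigma(\beta_\alpha) = \zeta_d^{k_\alpha(\sigma)}\beta_\alpha$. For any $\tau \in G_{n-1}$ and any fixed point $\alpha$ of $\tau$ on $T_{n-1}$, the lifts of $\tau$ to $G_n$ act on the fiber $f^{-1}(\alpha) = \{\zeta_d^k \beta_\alpha : 0 \leq k < d\}$ by rotations; the trivial rotation fixes all $d$ elements while any nontrivial one fixes none. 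Thus, if the $\alpha$-projection of $H_n$ surjects onto $\langle \zeta_d \rangle$, the lifts of $\tau$ form a full uniform coset of rotations, and the conditional expected contribution of this fiber to $Y_n$ is $d \cdot (1/d) = 1$. Summing over $\Fix(\tau)$ and noting that non-fixed $\alpha$ contribute $0$ gives $E(Y_n \mid \pi_{n-1}(\sigma) = \tau) = Y_{n-1}(\tau)$, from which the martingale identity follows by averaging.

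Next, I would stabilize the factorization using Theorems \ref{evstab} and \ref{firststab}: choose $N \geq 1$ so that for every $n \geq N$, $g \circ f^n = \prod_i h_i \circ f^{n-N}$ with each $h_i$ an irreducible factor of $g \circ f^N$ and each $h_i \circ f^m$ irreducible over $K$ for all $m \geq 0$. For $n > N$, by \cite[Theorem 9.1, p. 297]{langalg} the $\alpha$-projection of $H_n$ surjects onto $\langle \zeta_d \rangle$ precisely when $\alpha - c$ is not a $p$-th power in $K_{n-1}$ for any prime $p \mid d$; the auxiliary $4 \mid d$ clause is subsumed because $\zeta_4 \in K_{n-1}$ forces $-4(\alpha - c) \notin K_{n-1}^4$ whenever $\alpha - c \notin K_{n-1}^2$. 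I would then verify this non-$p$-th-power condition by computing local valuations at $\mathfrak p$. Let $j \geq 1$ be minimal with $g \mid f^j$. By the Galois-equivariance of $f$, every $\alpha \in T_{n-1}$ satisfies $f^{n-1+j}(\alpha) = 0$ but no smaller iterate of $f$ vanishes on $\alpha$. Extending $v_{\mathfrak p}$ to a valuation $w$ on $\overline K$ and inducting from $\alpha_{n-1+j} = 0$ downwards via $\alpha_i^d = \alpha_{i+1} - c$ as in the proof of Theorem \ref{evstab}, one obtains $w(\alpha) = w(c)/d^{n-1+j}$, which is strictly less than $w(c)$, so $w(\alpha - c) = w(\alpha) = w(c)/d^{n-1+j}$. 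For the prime $\mathfrak P$ of $K_{n-1}$ restricted by $w$, this yields $v_{\mathfrak P}(\alpha - c) = e(\mathfrak P/\mathfrak p) \cdot w(c)/d^{n-1+j}$.

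The key step, and main obstacle, is to show that $v_{\mathfrak P}(\alpha - c)$ is a $p$-adic unit for each prime $p \mid d$, which amounts to the exact relation $v_p(e(\mathfrak P/\mathfrak p)) = (n-1+j) v_p(d) - v_p(v_{\mathfrak p}(c))$. Because the residue characteristic of $\mathfrak p$ is prime to $d$, the ramification of $\mathfrak p$ in the iterated Kummer tower $K \subset K_1 \subset K_2 \subset \cdots$ is tame, and an inductive local analysis should show that the $p$-adic ramification index grows by exactly $v_p(d)$ per stage of the tower (once the tower is sufficiently high). Combined with the inequality $(n-1+j) v_p(d) > v_p(v_{\mathfrak p}(c))$, which holds for all sufficiently large $n$, this forces $p \nmid v_{\mathfrak P}(\alpha - c)$ and hence $\alpha - c \notin (K_{n-1}^*)^p$. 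Carrying out this ramification bookkeeping uniformly in $\alpha$ and in $p$ is the main technical hurdle, but once completed it supplies the surjectivity of every $\alpha$-projection of $H_n$ for large $n$, giving the desired eventual martingale property.
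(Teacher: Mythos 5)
Your proposal follows essentially the same local strategy as the paper's proof: reduce (the content of \cite[Theorem 2.5]{quaddiv}, which you rederive in your first paragraph) to showing that $f(z)-\alpha$ is irreducible over $K_{n-1}$ for every root $\alpha$ of $g\circ f^{n-1}$ and all large $n$, then verify this by ramification at $\mathfrak{p}$, using that every root $\alpha$ of $g\circ f^{n-1}$ satisfies $w(\alpha) = v_{\mathfrak{p}}(c)/d^{n-1+j}$. Your route through the Kummer criterion of \cite[Theorem 9.1]{langalg}, checking $\alpha - c \notin (K_{n-1}^*)^p$ via $p \nmid v_{\mathfrak{P}}(\alpha-c)$, is equivalent (by Lemma \ref{Kummer ramification}) to the paper's direct computation $[(K_{n-1}(\beta_n))_{\mathfrak{p}}:(K_{n-1})_{\mathfrak{p}}]=d$, so the overall plan is sound and your handling of the auxiliary $4\mid d$ clause is correct.

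However, there is a real gap exactly where you flag the ``main technical hurdle.'' The sentence ``an inductive local analysis should show that the $p$-adic ramification index grows by exactly $v_p(d)$ per stage'' is a hope, not an argument, and two distinct nontrivial points must be supplied to make the claimed identity $v_p(e(\mathfrak{P}/\mathfrak{p})) = (n-1+j)v_p(d) - v_p(v_{\mathfrak{p}}(c))$ hold for large $n$. First, you need $e(\mathfrak{P}/\mathfrak{p})$, the ramification in the full splitting field $K_{n-1}$ (which adjoins all Galois conjugates of $\beta_{n-1}$), to equal the ramification degree of the single-root field $K(\beta_{n-1})_{\mathfrak{p}}$ over $K_{\mathfrak{p}}$; a priori adjoining conjugates could increase ramification. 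The paper obtains this from Abhyankar's lemma, using tameness (residue characteristic prime to $d$) and the fact that all roots share the same $\mathfrak{p}$-adic valuation, so all the single-root ramification degrees agree. Second, you need the stabilization $e_n/e_{n-1}=d$ for all large $n$, which the paper proves by a sequence argument: writing $r/d_n = k_n/e_n$ and showing $k_n\mid k_{n-1}$ with $k_n = k_{n-1}$ exactly when $e_n/e_{n-1}=d$, so the $k_n$ must eventually become constant. These two steps, together with Lemma \ref{Kummer ramification} itself (needed even for factors of $z^d-a$), are the substantive content of the paper's proof of Theorem \ref{mart1}; without them your proposal is a correct outline rather than a proof.
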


\begin{theorem} \label{mart2}
Suppose that $d$ is prime and $K$ is a global field of characteristic not dividing $d$ and containing a primitive $d$th root of unity. Let $f(z) = z^d + c \in K[z]$, and let $g(z) \in K[z]$ divide an iterate of $f$. Assume that for $n \geq 1$, $(-1)^{\epsilon} g(f^n(0))$ is not a $d$th power in $K$, where $\epsilon = 1$ if $n = 1$, $d =2$, and $\deg g$ is odd, and $\epsilon = 0$ otherwise. Then the Galois process associated to $(f, g)$ is a martingale. 
\end{theorem}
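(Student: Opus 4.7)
The plan is to verify the martingale identity by a pointwise refinement: I will show $E(Y_n \mid \pi_{n-1}(\sigma) = \tau) = |\Fix(\tau)|$ for every $\tau \in G_{n-1}$, from which the required identity $E(Y_n \mid Y_1 = t_1, \ldots, Y_{n-1} = t_{n-1}) = t_{n-1}$ follows by partitioning the conditioning event into disjoint pieces indexed by $\tau$. Any $\sigma \in G_n$ with $\pi_{n-1}(\sigma) = \tau$ respects the fiber structure of $T_n \to T_{n-1}$ induced by $f$, so $\sigma$-fixed points in $T_n$ must lie above $\tau$-fixed points in $T_{n-1}$. For each $\alpha \in \Fix(\tau)$, the fiber $f^{-1}(\alpha) = \{\zeta_d^k \beta_\alpha : 0 \leq k < d\}$ is permuted by $\sigma$ via multiplication by some power of $\zeta_d$ (since $\zeta_d \in K$), so either all $d$ elements are fixed or none are. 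Consequently
\[Y_n(\sigma) = d \cdot \#\{\alpha \in \Fix(\tau) : \sigma \text{ fixes } f^{-1}(\alpha) \text{ pointwise}\}.\]

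The key structural input is a Kummer-theoretic maximality claim: ${\rm Gal}(K_n/K_{n-1}) \cong ({\mathbb{Z}}/d{\mathbb{Z}})^{N_{n-1}}$, where $N_{n-1} = \deg(g \circ f^{n-1})$, with each factor acting independently on a single fiber of $T_n \to T_{n-1}$. Since $\zeta_d \in K_{n-1}$, by Kummer theory this is equivalent to the classes $\{\alpha - c : \alpha \in T_{n-1}\}$ being ${\mathbb{F}}_d$-linearly independent in $K_{n-1}\units/(K_{n-1}\units)^d$. Granting this, for each $\alpha \in \Fix(\tau)$ the restriction map from $\pi_{n-1}^{-1}(\tau)$ recording the action on $f^{-1}(\alpha)$ surjects onto ${\mathbb{Z}}/d{\mathbb{Z}}$ with uniform fibers, so $\sigma$ acts trivially on this fiber with probability $1/d$. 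By linearity of expectation,
\[E(Y_n \mid \pi_{n-1}(\sigma) = \tau) = d \cdot |\Fix(\tau)| \cdot \frac{1}{d} = |\Fix(\tau)|,\]
completing the proof modulo the Kummer-independence claim.

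The main obstacle is establishing the Kummer-independence claim. One first reduces to the case that $g$ is irreducible (the setting in which the theorem is applied in the proof sketch of Theorem \ref{main1}). Theorem \ref{firststab}---whose condition (2) is vacuous for $d$ prime---combined with our hypothesis then yields irreducibility of $g \circ f^n$ over $K$ for every $n \geq 1$, hence transitivity of $G_{n-1}$ on $T_{n-1}$. Suppose for contradiction a nontrivial relation $\prod_\alpha (\alpha - c)^{e_\alpha} \in (K_{n-1}\units)^d$ exists. Multiplicatively averaging over the $G_{n-1}$-translates---in the typical case $\sum_\alpha e_\alpha \not\equiv 0 \pmod d$---produces a constant-exponent relation, and the norm identity \eqref{fin} from the proof of Theorem \ref{firststab} then forces $(-1)^{N_{n-1}} g(f^n(0)) \in (K_{n-1}\units)^d$. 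A short parity argument matches the sign $(-1)^{N_{n-1}}$ with $(-1)^\epsilon$, using that $-1$ is a $d$th power for $d$ odd and treating the exceptional case $d=2$, $n=1$, $\deg g$ odd separately. The crucial and most delicate step is then the descent: to show that an element of $K\units$ which becomes a $d$th power in the Kummer-type tower $K_{n-1}$ must already have been a $d$th power in $K$, contradicting the hypothesis. This descent should follow from an inductive analysis of the tower $K \subset K_0 \subset K_1 \subset \cdots \subset K_{n-1}$, exploiting that each successive extension is built from $d$th roots and that $d$ is prime; the augmentation-zero case, where the averaging trick yields no nontrivial constant-exponent relation, will require a supplementary representation-theoretic argument exploiting the tree structure of the $f$-preimage map $T_{n-1} \to T_{n-2}$.
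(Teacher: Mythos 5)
Your reduction of the martingale identity to a pointwise claim $E(Y_n \mid \pi_{n-1}(\sigma)=\tau)=|\Fix(\tau)|$, and the observation that $Y_n(\sigma)$ counts $d$ times the number of $\alpha\in\Fix(\tau)$ whose fiber $\sigma$ fixes pointwise, is correct and is the right frame. But the structural input you invoke --- full maximality of $H_n = \mathrm{Gal}(K_n/K_{n-1})$, i.e.\ $\mathbb{F}_d$-linear independence of the classes $\{\alpha-c : \alpha\in T_{n-1}\}$ in $K_{n-1}^\times/(K_{n-1}^\times)^d$ --- is strictly stronger than what you need, and crucially it is not a consequence of the hypotheses. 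The hypothesis is only that $(-1)^{\epsilon}g(f^n(0))$ is not a $d$th power \emph{in $K$}; maximality of $H_n$ is (by Lemma~\ref{maxone}) equivalent to $g(f^n(0))$ not being a $d$th power in the much larger field $K_{n-1}$. Your proposed ``descent'' --- that an element of $K^\times$ becoming a $d$th power somewhere in the tower $K\subset K_1\subset\cdots\subset K_{n-1}$ must already be a $d$th power in $K$ --- is false; these are Kummer towers built by adjoining $d$th roots of elements of $K_{n-2}$, and in particular $K_1$ already contains $\sqrt[d]{-c}$, so $-c\in K^\times$ becomes a $d$th power in $K_1$ with no need to be one in $K$. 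In fact the paper spends a full separate argument (Theorem~\ref{inf many primitives}, via Siegel's theorem and a rigid-divisibility argument) to establish maximality of $H_n$ for \emph{infinitely many} $n$; it does not and cannot claim it for all $n$, which is precisely why the martingale argument must avoid needing it.

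What your framing actually requires is much weaker: for each root $\alpha$ of $g\circ f^{n-1}$, that $f(z)-\alpha$ is irreducible over $K_{n-1}$, i.e.\ that the restriction map $H_n\to\mathrm{Gal}(K_{n-1}(\beta)/K_{n-1})\cong\mathbb{Z}/d\mathbb{Z}$ is surjective for each fiber; one does not need the fibers to act independently. The paper's proof establishes exactly this via a group-theoretic argument that sidesteps the descent problem entirely. The steps are: (i)~by Theorem~\ref{firststab}, the hypothesis gives $g\circ f^n$ irreducible over $K$ for all $n$; (ii)~since $d$ is prime and $g$ divides an iterate of $f$, $K_n$ is built from $K$ by successive degree-$d$ extensions, so $G_n$ is a $d$-group and has nontrivial center containing an element of order $d$; (iii)~Lemma~\ref{rafeisahiphopdog} (which relies on the cyclic-fiber-system machinery of Lemmas~\ref{martlem1} and~\ref{martlem2}, and ultimately on the hypothesis that $(-1)^\epsilon g(f^n(0))$ is not a $d$th power \emph{in $K$}) then forces the central element to be a power of the fundamental permutation $\sigma_{\mathcal{S}}$ (multiplication by $\zeta_d$), hence $\sigma_{\mathcal{S}}\in G_n$; (iv)~since $\sigma_{\mathcal{S}}$ fixes $K_{n-1}$ and is a $d$-cycle on every fiber, $f(z)-\alpha$ is irreducible over $K_{n-1}$ for every root $\alpha$, and then \cite[Theorem~2.5]{quaddiv} gives the martingale property. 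Your proposal misses this center argument, which is the whole content of the theorem; without it, and given that the Kummer-independence and descent claims are not available, the proof has a genuine gap.
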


These two theorems correspond to cases (1) and (2) of Theorem \ref{main1}. While there are many cases covered by both Theorems \ref{mart1} and \ref{mart2}, greater generality can be achieved by using both. For example,  the case where $g(z) = f^0(z) = z$, $f(z) = z^6 + 5$, and $K = {\mathbb{Q}}(\zeta_6)$ is covered by Theorem \ref{mart1}, and $g(z) = z$, $f(z) = z^3 + 3$, and $K = {\mathbb{Q}}(\zeta_3)$ by Theorem \ref{mart2}, but neither theorem covers both. The proof of  Theorem \ref{mart2} is substantially more involved than that of Theorem \ref{mart1}.

\subsection{Local theory and proof of Theorem \ref{mart1}}
To prove Theorem \ref{mart1}, it is enough by \cite[Theorem 2.5]{quaddiv} to show that for sufficiently large $n$ and any root $\alpha$ of $g \circ f^{n-1}$, the polynomial $f(z) - \alpha$ is irreducible over the splitting field $K_{n-1}$ of $g \circ f^{n-1}$. This is equivalent to 
\begin{equation} \label{degree}
	[K_{n-1}(\beta) : K_{n-1}] = d, 
\end{equation} 
for any root $\beta$ of $g \circ f^{n}$. 

Denote by $K_{\mathfrak{p}}$ the completion of $K$ at the prime ${\mathfrak{p}}$.  Fix an embedding $\omega$ of $\overline{K}$ into $\overline{K_{\mathfrak{p}}}$, and by abuse of notation we denote by $L_{\mathfrak{p}}$ the completion of $\omega(L)$, for any extension $L$ of $K$. Our strategy for showing \eqref{degree} is to prove the stronger statement $[(K_{n-1}(\beta))_{\mathfrak{p}} : (K_{n-1})_{\mathfrak{p}}] = d$, which we accomplish by showing that the ramification degree of $(K_{n-1}(\beta))_{\mathfrak{p}}$ over  $(K_{n-1})_{\mathfrak{p}}$ is $d$. The extensions involved are compositions of certain Kummer extensions, whose ramification degrees are described in the following lemma. 

\begin{lemma} \label{Kummer ramification}
Let $L$ be a field that is complete with respect to a discrete valuation $v$. Suppose that $d \geq 2$, the residue characteristic of $L$ is prime to $d$, and $L$ contains a primitive $d$th root of unity. If $a \in L$ with $v(a) = r \geq 0$, then for any root $\rho$ of $z^d - a$, the ramification degree of $L(\rho)$ over $L$ is $d/\gcd(d,r)$. 
\end{lemma}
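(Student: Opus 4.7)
My plan is to set $m = \gcd(d, r)$, write $d = md'$ and $r = mr'$ with $\gcd(d', r') = 1$, and prove that the ramification index $e$ of $L(\rho)/L$ equals $d'$ via matching lower and upper bounds. The basic numerical input is that since $L$ is complete, $v$ extends uniquely to $\overline{L}$, and from $\rho^d = a$ we obtain $v(\rho) = r/d = r'/d'$.

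For the lower bound $e \geq d'$, I would observe that the value group $v(L(\rho)^\times)$ is $(1/e)\mathbb{Z}$, so the fact that $r'/d'$ belongs to this group together with $\gcd(r', d') = 1$ forces $d' \mid e$. For the upper bound $e \leq d'$, I would factor $L(\rho)/L$ through the intermediate field $L(\beta)$, where $\beta := \rho^{d'}$. Then $\beta^m = \rho^d = a$, and $v(\beta) = d'\cdot (r'/d') = r' \in \mathbb{Z}$. Writing $a = u\pi^r$ with $\pi$ a uniformizer of $L$ and $u$ a unit, the element $\gamma := \beta/\pi^{r'} \in L(\beta)$ satisfies $\gamma^m = u$. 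Since $m \mid d$, the hypotheses descend: $\zeta_m = \zeta_d^{d/m} \in L$ and the residue characteristic of $L$ is prime to $m$. Consequently $z^m - u$ has separable reduction modulo $\pi$, and Hensel's lemma yields that $L(\beta)/L = L(\gamma)/L$ is unramified. On the other hand, $\rho$ satisfies $z^{d'} - \beta$ over $L(\beta)$, so $[L(\rho):L(\beta)] \leq d'$ and in particular $e(L(\rho)/L(\beta)) \leq d'$. Multiplicativity of ramification indices in the tower then gives $e = e(L(\rho)/L(\beta)) \cdot e(L(\beta)/L) \leq d' \cdot 1 = d'$, as desired.

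I do not expect a serious obstacle here. The key conceptual move is the choice of intermediate field $L(\beta)$, which cleanly separates an unramified Kummer piece, generated by an $m$-th root of the unit $u$, from a totally ramified Kummer piece, generated by a $d'$-th root of an element whose valuation has denominator $d'$. The one subtlety worth checking carefully is that the Hensel-type criterion for unramifiedness still applies when $z^m - u$ happens to be reducible over $L$; but each irreducible factor has separable reduction of matching degree, so the unramified conclusion for $L(\gamma)/L$ stands regardless.
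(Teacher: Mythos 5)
Your proof is correct, and it takes a cleaner route than the paper's. Both arguments start the same way: the unique extension of $v$ to $\overline{L}$ gives $v(\rho)=r/d=r'/d'$ in lowest terms, which forces $d'\mid e$; and both identify the same unit $\gamma=\rho^{d'}/\pi^{r'}$ with $\gamma^m=u$ as the source of the unramified part. The difference is in how the two arguments close. The paper first assumes $z^d-a$ irreducible, uses $[L(\rho):L]=d$ together with $ef=d$ and $f\geq m$ to pin down $e=d/m$, and then reduces the reducible case back to the irreducible one via Lemma~\ref{Capelli Part} (replacing $d$ by the degree $k$ of the minimal polynomial of $\rho$ and doing a small gcd bookkeeping computation). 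You instead tower $L\subset L(\gamma)=L(\beta)\subset L(\rho)$ and sandwich: $L(\gamma)/L$ is unramified because $\gamma^m=u$ with $m$ prime to the residue characteristic, and $e(L(\rho)/L(\beta))\leq[L(\rho):L(\beta)]\leq d'$ since $\rho$ satisfies $z^{d'}-\beta$. This avoids both the case split on irreducibility and the appeal to Lemma~\ref{Capelli Part}, and it never needs to know $[L(\rho):L]$. You also correctly flag and dispose of the one subtlety (the Hensel/unramifiedness step when $z^m-u$ is reducible): the separable reduction factors into distinct irreducibles, each of which lifts, so the minimal polynomial of $\gamma$ over $L$ has irreducible separable reduction and $L(\gamma)/L$ is unramified in all cases.
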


\begin{remark}
Lemma \ref{Kummer ramification} holds regardless of whether $z^d - a$ is irreducible over $L$. This plays a key role in the proof of Theorem \ref{mart1}.
\end{remark}

\begin{proof}
Let $m = \gcd(d,r)$ and $\pi$ be a uniformizer for $L$, so that $a = u\pi^r$ for some $u$ with $v(u) = 0$.  First assume that $z^d - a$ is irreducible over $L$. 
The Newton polygon of $z^d - a$ consists of a segment of slope $r/d$, and hence $v(\rho) = r/d = (r/m)/(d/m)$, where the latter fraction is in lowest terms. It follows that $L(\rho)$ has ramification degree at least $d/m$. On the other hand, 
\[\left( \frac{\rho^{d/m}}{\pi^{r/m}} \right)^m = u,\]
and $z^m - u$ must be irreducible over $L$, for otherwise $\pi^r[(z^{(d/m)}/\pi^{(r/m)})^m - u] =z^d - a$ has a non-trivial factorization, contradicting our assumption. By Hensel's Lemma and the fact that $d$ (and hence $m$) is prime to the residue characteristic of $L$, we have that $z^m - \overline{u}$ is irreducible over the residue field of $L$. Thus $(\rho^{d/m})/(\pi^{r/m})$ generates an unramified sub-extension of $L(\rho)$ of degree $m$, proving that the ramification degree of $L(\rho)$ is exactly $d/m$. 

Suppose now that $z^d - a$ is not necessarily irreducible, and let $\rho$ be a root. By the proof of Lemma \ref{Capelli Part}, $\rho$ is a root of an irreducible polynomial of the form $z^k - \rho^k \in L[z]$ for some $k \mid d$. By the previous paragraph, $L(\rho)$ has ramification degree $k/\gcd(k,v(\rho^k))$ over $L$. However, $k = (k/d) \cdot d$ and $v(\rho^k) = (k/d) \cdot r$, whence $\gcd(k, v(\rho^k)) = (k/d) \cdot m$. Therefore $L(\rho)$ has ramification degree $[(k/d) \cdot d]/[(k/d) \cdot m] = d/m$, as desired. 
\end{proof}

\begin{proof}[Proof of Theorem \ref{mart1}]
Let $r = v_{\mathfrak{p}}(g(0))$ and $d_n = \deg(g \circ f^{n})$. We first use our assumption that $g$ divides an iterate of $f$ to show that $v_{\mathfrak{p}}(\beta) = r/d_n$ for any root $\beta$ of $g \circ f^{n}$. It is straightforward to show that, for any $k\geq 0$, the non-leading coefficients of $f^k(z)$ are polynomials in $c$ without constant coefficients, and moreover by Lemma \ref{rds1} we have that $v_{\mathfrak{p}}(f^k(0)) = v_{\mathfrak{p}}(c)$. The Newton polygon of $f^k(z)$ is thus a single line segment of slope $-v_{\mathfrak{p}}(c)/d_n$. In the case where $g$ is an iterate of $f$, we have $v_{\mathfrak{p}}(g(0)) = v_{\mathfrak{p}}(c)$, and hence all roots of $g \circ f^{n}$ have ${\mathfrak{p}}$-adic valuation $r/d_n$.  If $g$ is not an iterate of $f$, we may apply Theorem \ref{splitting}; taking $z = 0$ there implies that if $h(z)$ is any divisor of an iterate of $f$, then $0 < v_{\mathfrak{p}}(h(0)) < v_{\mathfrak{p}}(c)$. Apply this to the present $g$ (which we remark plays the role of $h$ in Theorem \ref{splitting}) to get $0 < v_{\mathfrak{p}}(g(0)) < v_{\mathfrak{p}}(c)$. The ultra-metric inequality then gives that $v_{\mathfrak{p}}(g(f^n(0))) = v_{\mathfrak{p}}(g(0))$ for all $n \geq 1$. Hence the Newton polygon of $g \circ f^{n}$ is a single segment of slope $-r/d_n$, as desired. 

Denote by $e(L_{\mathfrak{p}})$ the ramification degree of an extension $L_{\mathfrak{p}}$ of $K_{\mathfrak{p}}$. Let $\beta_{n-1}$ be a root of $g \circ f^{n-1}$ and fix another root $\beta_{n-1}'$. We have $v_{\mathfrak{p}}(\beta_{n-1}) = v_{\mathfrak{p}}(\beta_{n-1}')$ by the previous paragraph, and hence $e(K(\beta_{n-1})_{\mathfrak{p}}) = e(K(\beta_{n-1}')_{\mathfrak{p}})$ by Lemma \ref{Kummer ramification}. It follows from Theorem \ref{splitting} that any irreducible factor of $g \circ f^{n-1}$ over $K_{\mathfrak{p}}$ has degree dividing $\deg (g \circ f^{n-1})$, which in turn divides a power of $d$. Applying this to the minimal polynomials over $K_{\mathfrak{p}}$ of $\beta_{n-1}$ and $\beta_{n-1}'$, we see that $e(K(\beta_{n-1})_{\mathfrak{p}})$ and $e(K(\beta_{n-1}')_{\mathfrak{p}})$ are prime to the residue characteristic of $K_{\mathfrak{p}}$, so that both extensions are tamely ramified. By Abhyankar's lemma \cite[Theorem 3]{Cornell}, we have 
\[e(K(\beta_{n-1}, \beta_{n-1}')_{\mathfrak{p}}) = \gcd\left(e(K(\beta_{n-1})_{\mathfrak{p}}), e(K(\beta_{n-1}')_{\mathfrak{p}}) \right) = e(K(\beta_{n-1})_{\mathfrak{p}}).\]
Applying this argument repeatedly, we have 
\begin{equation} \label{noram}
e((K_{n-1})_{\mathfrak{p}}) = e(K(\beta_{n-1})_{\mathfrak{p}}).
\end{equation} 

Let $\beta_1, \beta_2, \ldots$ be such that $\beta_n$ is a root of $g \circ f^{n}$ and $f(\beta_{n}) = \beta_{n-1}$ for all $n \geq 2$. Put $e_n = e(K(\beta_n)_{\mathfrak{p}})$. Because $v_{\mathfrak{p}}(\beta_n) = r/d_n$ and the value group of $K(\beta_n)_{\mathfrak{p}}$ is $(1/e_n){\mathbb{Z}}$, we have that $r/d_n$ is a multiple of $1/e_n$. Consider the sequence of positive integers $\{k_n\}_{n \geq 1}$ such that 
\[\frac{r}{d_n}=\frac{k_n}{e_n}.\]
Then we have
\[1=\frac{k_{n}d_{n}e_{n-1}}{k_{n-1}d_{n-1}e_{n}},\]
and therefore
\[d=\frac{d_{n}}{d_{n-1}} = \left(\frac{e_{n}}{e_{n-1}}\right)\left(\frac{k_{n-1}}{k_{n}}\right).\]
As $(e_{n}/e_{n-1})$ divides $[K(\beta_{n})_{\mathfrak{p}} : K(\beta_{n-1})_{\mathfrak{p}}]$, which in turn divides $d$,  we must have $k_{n}\mid k_{n-1}$, with moreover $k_{n}=k_{n-1}$ if and only if $(e_{n}/e_{n-1})=d$. Because $k_1$ is fixed, there is some $n_0$ such that $n>n_0$ implies $e_{n}/e_{n-1}=d$. Thus we have 
\begin{equation} \label{altevstab}
	\text{$e(K(\beta_{n})_{\mathfrak{p}}) = d \cdot e(K(\beta_{n-1})_{\mathfrak{p}})$ \qquad for $n > n_0$,}
\end{equation} 
and because $e(K(\beta_{n})_{\mathfrak{p}})$ is identical for all roots $\beta_n$ of $g \circ f^{n}$, $n_0$ does not depend on the choice of $\beta_n$. 

From \eqref{noram}, we now obtain $e(K(\beta_{n})_{\mathfrak{p}}) = d \cdot e((K_{n-1})_{\mathfrak{p}})$ for $n > n_0$. Because  
\[e(K(\beta_{n})_{\mathfrak{p}}) \leq e((K_{n-1}(\beta_{n}))_{\mathfrak{p}}) \leq d \cdot e((K_{n-1})_{\mathfrak{p}}),\]
where the last inequality follows since $[(K_{n-1}(\beta_{n}))_{\mathfrak{p}} : (K_{n-1})_{\mathfrak{p}}] \leq d$,
we have shown $e((K_{n-1}(\beta_{n}))_{\mathfrak{p}}) = d \cdot e((K_{n-1})_{\mathfrak{p}})$. This proves $[(K_{n-1}(\beta_n))_{\mathfrak{p}} : (K_{n-1})_{\mathfrak{p}}] = d$ for $n > n_0$. The argument applies to any root $\beta_n$ of $g \circ f^{n}$, thus establishing \eqref{degree} for $n > n_0$.
\end{proof}

\begin{remark}
From \eqref{altevstab} it follows that $[K(\beta_{n})_{\mathfrak{p}} : K(\beta_{n-1})_{\mathfrak{p}}] = d$ for $n > n_0$ and all roots $\beta_n$ of $g \circ f^{n}$. This gives an alternate proof of Theorem \ref{evstab}.  
\end{remark}

\subsection{Proof of Theorem \ref{mart2}: background and definitions}\label{tree defs}
Recall that $G_n$ is the Galois group of $K_n = K(T_n)$ over $K$, where $T_n$ is the set of roots of $g \circ f^{n}$.  A key property of the action of $G_n$ on $T_n$ is that it must commute with the natural map $f : T_n \to T_{n-1}$. We thus introduce some terminology relevant to such group actions.

If $G$ is a group, recall that a $G$-set is any set $S$ on which $G$ acts, and a map $\phi : S \to S'$ is a morphism of $G$-sets if $\phi(\sigma(s)) = \sigma(\phi(s))$ for all $\sigma \in G$ and $s \in S$.  A {\em fiber system} on a $G$-set $S$ is the set of fibers of any morphism $\phi : S \to S'$ of $G$-sets.  It is easy to check that a partition ${\mathcal{S}}$ of $S$ is a fiber system if and only if $\sigma(T) \in {\mathcal{S}}$ for each $T \in {\mathcal{S}}$, or in other words the constituent sets of ${\mathcal{S}}$ are permuted by the action of $G$.  For a set $S$ and a partition ${\mathcal{S}}$ of $S$, denote by ${\rm Perm}(S, {\mathcal{S}})$ the set of all permutations of $S$ that act as permutations on ${\mathcal{S}}$. Note that if $G$ acts on $S$ and ${\mathcal{S}}$ is a fiber system for the $G$-set $S$, then $G \leq {\rm Perm}(S, {\mathcal{S}})$.
Suppose that ${\mathcal{S}} = \{S_1, \ldots, S_k\}$ and each $S_i$ has $d$ elements. Fix a permutation $\sigma_{\mathcal{S}} \in {\rm Sym}(S)$ whose orbits are precisely the sets $S_i$, and fix a distinguished element $s_i$ in each $S_i$; this is equivalent to fixing an ordering of the elements of each $S_i$. Now each $\tau \in {\rm Perm}(S, {\mathcal{S}})$ induces a permutation $\tau'$ on ${\mathcal{S}}$. Moreover, if $\tau(S_i) = S_j$, then an element $\delta_i \in {\rm Sym}(d)$, the symmetric group on $d$ letters, is determined as follows: put $\delta_i(\ell_1) = \ell_2$ if 
\begin{equation} \label{restrictiondef}
	\tau(\sigma_{\mathcal{S}}^{\ell_1}(s_i)) = \sigma_{\mathcal{S}}^{\ell_2}(s_j).
\end{equation}
We thus obtain a map 
\begin{align} \label{wreathisom}
	\Phi :  {\rm Perm}(S, {\mathcal{S}})  & \to {\rm Sym}(d) \wr {\rm Sym}({\mathcal{S}}) \\
	 \tau \qquad  & \mapsto ((\delta_1, \ldots, \delta_k), \tau') \nonumber
\end{align}
that is readily seen to be an isomorphism. Recall that the wreath product ${\rm Sym}(d)~\wr~{\rm Sym}({\mathcal{S}})$ is the semi-direct product ${\rm Sym}(d)^{|{\mathcal{S}}|} \rtimes {\rm Sym}({\mathcal{S}})$ with the natural action of ${\rm Sym}({\mathcal{S}})$ on indices, i.e. 
\[((\delta_1, \ldots, \delta_k), \tau') \cdot ((\epsilon_1, \ldots, \epsilon_k), \omega') = ((\delta_1\epsilon_{\tau'(1)}, \ldots,  \delta_k\epsilon_{\tau'(k)}), \tau' \omega'),\]
where we say $\tau'(1) = j$, when $\tau'(S_1) = S_j$.  We refer to the permutation $\delta_i$ as the \textit{restriction} of $\tau$ to the index $i$, and often write it $\tau|_i$. Note that it depends not only on $\tau$ and $i$, but also on our choices of $\sigma_{\mathcal{S}}$ and the $s_i$. A useful map is given by taking the product of the restrictions: 
\begin{align}  \label{psimap}
	\psi_{\mathcal{S}} :  {\rm Perm}(S, {\mathcal{S}})  \to {\rm Sym}(d), \qquad \psi_{\mathcal{S}}(\tau) =  \prod_{i = 1}^k \tau|_i.
\end{align}
Note that in general $\psi_{\mathcal{S}}$ is not a group homomorphism, although it becomes one in the case where $\tau|_i$ commutes with $\omega|_j$ for any $\tau, \omega \in {\rm Perm}(S, {\mathcal{S}})$ and any $i, j$.

We are most interested in the following special case:
\begin{definition}
Let $G$ be a group and $S$ a $G$-set. A pair $({\mathcal{S}}, \sigma_{\mathcal{S}})$ is a {\em cyclic fiber system} for the action of $G$ on $S$ if ${\mathcal{S}}$ is a fiber system on $S$, the orbits of $\sigma_{\mathcal{S}}$ are precisely the sets in ${\mathcal{S}}$, and $G \leq C_{{\rm Sym}(S)}(\sigma_{\mathcal{S}})$, the centralizer in ${\rm Sym}(S)$ of $\sigma_{\mathcal{S}}$. We call $\sigma_{\mathcal{S}}$ the permutation associated to ${\mathcal{S}}$. 
\end{definition}

Let ${\mathcal{S}}$ be a cyclic fiber system for $G$, and for each $S_i$ in ${\mathcal{S}}$, fix an element $s_i$.  Suppose that $\tau \in C_{{\rm Sym}(S)}(\sigma_{\mathcal{S}})$, and $\tau(s_i) =  \sigma_{\mathcal{S}}^{r_i}(s_j)$. Because $\tau$ commutes with $\sigma_{\mathcal{S}}$, we have $\tau(\sigma_{\mathcal{S}}^t(s_i)) =  \sigma_{\mathcal{S}}^{r_i + t}(s_j)$ for all $t \geq 0$, and because $S_i$ is one of the orbits of $\sigma_{\mathcal{S}}$, this completely determines $\tau|_i$. Indeed, $\tau|_i = \delta^{r_i}$, where $\delta$ is the $d$-cycle $(0, 1, \ldots, d-1)$. The map in \eqref{wreathisom} becomes
\begin{align} \label{wreathisom2}
	\Phi : C_{{\rm Sym}(S)}(\sigma_{\mathcal{S}})  & \to ({\mathbb{Z}}/d{\mathbb{Z}}) \wr {\rm Sym}({\mathcal{S}}) \\
	 \tau \qquad  & \mapsto ((r_1, \ldots, r_k), \tau') \nonumber
\end{align}
We now obtain a homomorphism
\begin{align}  \label{psimap2}
	\psi_{\mathcal{S}} :  C_{{\rm Sym}(S)}(\sigma_{\mathcal{S}})  \to {\mathbb{Z}}/d{\mathbb{Z}}, \qquad \psi_{\mathcal{S}}(\tau) =  \sum_{i = 1}^k r_i.
\end{align}
Note that we have made a choice of the $s_i$, and $\Phi$ is not independent of this choice. Suppose that we replace $s_i$ with $s_i'$, and write $s_i' = \sigma_{\mathcal{S}}^\ell(s_i)$. One checks that $\tau|_i$ is now $r_i + \ell$. However, if $m$ is such that $\tau(S_m) = S_i$, then $\tau(s_m) = \sigma_{\mathcal{S}}^{r_m}(s_i) = \sigma_{\mathcal{S}}^{r_m-\ell}(s_i')$, and thus $\tau|_m = r_m - \ell$.  Hence the map $\psi_{\mathcal{S}}$ is independent of the choice of the $s_i$.

\subsection{Actions with multiple cyclic fiber systems}
Suppose $f(z) = z^d + c$ for some $d \geq 2$ and $K$ contains a primitive $d$th root of unity $\zeta_d$.
We describe two ways in which cyclic fiber systems arise for the action of the Galois group $G_n$ on the set $T_n$ of roots of $g \circ f^{n}$.  If $S = T_n$, then we obtain a fiber system by taking ${\mathcal{S}}$ to be the set of fibers of the map $f : T_n \to T_{n-1}$. We sometimes refer to this as the \textit{fundamental cyclic fiber system} of $T_n$. 
If $\alpha \in T_{n-1}$ and $\beta \in T_n$ satisfy $f(\beta) = \alpha$, then the fiber of the map $f$ over $\alpha$ is 
\begin{equation*} \label{preimages}
\{\beta \zeta_d^{j} : j = 0,1, \ldots, d-1\},
\end{equation*}
We make ${\mathcal{S}}$ into a cyclic fiber system by choosing $\sigma_{\mathcal{S}}$ to be the permutation given by multiplication by $\zeta_d$, which clearly acts as a full $d$-cycle on each fiber of $f$. Moreover, since $\zeta_d$ is fixed by each $\tau \in G_n$, we have that $\tau$ commutes with $\sigma_{\mathcal{S}}$, and therefore $({\mathcal{S}}, \sigma_{\mathcal{S}})$ is a cyclic fiber system for the action of $G_n$ on $T_n$. 

When $G_n$ has non-trivial center, we have another way to generate non-trivial cyclic fiber systems.  Take $\omega \in Z(G_n)$.  If $\{\omega^i(\beta) : i \geq 1\}$ is an orbit of $\omega$ acting on $T_n$ and $\tau \in G_n$, then $\tau(\{\omega^i(\beta) : i \geq 1\}) = \{\omega^i(\tau(\beta)) : i \geq 1\}$ and hence is another orbit of $\omega$. Thus if we denote the set of orbits of $\omega$ by ${\mathcal{O}}_\omega$, then the pair $({\mathcal{O}}_\omega, \omega)$ is a cyclic fiber system for $G_n$, which we call a {\em central cyclic fiber system}.  We remark that if $G_n$ acts transitively on $T_n$, then all orbits of $\omega$ must contain the same number of elements. 

A key difference between a central cyclic fiber system and the fundamental cyclic fiber system is that $\omega$ belongs to $G_n$, whereas a priori $\sigma_{\mathcal{S}}$ may not belong to $G_n$. In the case where the fundamental cyclic fiber system is also a central cyclic fiber system, we obtain $\sigma_{\mathcal{S}} \in G_n$, a conclusion that plays a crucial role in the proof of Theorem \ref{mart2}. We thus examine under what conditions a group action can have multiple distinct cyclic fiber systems. To fix ideas, and to give a flavor for our next result, we give an example. 
\begin{example} \label{a4}
Let $K = {\mathbb{Q}}$, $g(z) = z$, $f(z) =z^2 + 1/3$, and $T_2 = \{\pm \beta_1, \pm \beta_2\}$. One checks that both $f$ and $f^2$ are irreducible, and hence $\#G_2 \geq \deg f^2 = 4$.  However, the discriminant of $f^2$ is $1024/81$, which is a square, and thus 
$G_2 \leq A_4 \cap D_4$.  Hence $G_2 \cong A_4 \cap D_4$, and the action of $G_2$ on $T_2$ is given by 
$$e, \; (\beta_1, -\beta_1)(\beta_2, -\beta_2), \; (\beta_1, \beta_2)(-\beta_1, -\beta_2), \; (\beta_1, -\beta_2)(-\beta_1, \beta_2).$$
The fundamental cyclic fiber system for $G_2$ is $\{\beta_1,-\beta_1\}, \{\beta_2, -\beta_2\}$.  However, $G_2$ is abelian, and hence there are three non-trivial central cyclic fiber systems: the fundamental cyclic fiber system as well as the partitions $\{\{\beta_1, \beta_2\}, \{-\beta_1,-\beta_2\}\}$ and $\{\{\beta_1,-\beta_2\}, \{-\beta_1,\beta_2\}\}$. Note that $f^2(0) = 4/9$ is a square in ${\mathbb{Q}}$. 
\end{example}

The following is a generalization of \cite[Theorem 4.7]{galmart}.

\begin{lemma} \label{martlem1}
Let $G$ be a group acting transitively on a set $S$, and suppose that $({\mathcal{S}}, \sigma_{\mathcal{S}})$ is a cyclic fiber system for this action, with ${\mathcal{S}}$ composed of sets with $d$ elements. Let $({\mathcal{T}}, \sigma_{\mathcal{T}})$ be another cyclic fiber system for the action of $G$ on $S$, and suppose that $\sigma_{\mathcal{T}}$ commutes with $\sigma_{\mathcal{S}}$, $\sigma_{\mathcal{T}} \not\in \langle \sigma_{\mathcal{S}} \rangle$, and $\sigma_{\mathcal{T}}^d = 1$. Then $\psi_{\mathcal{S}}(G)$ is a proper subgroup of ${\mathbb{Z}}/d{\mathbb{Z}}$, where $\psi_{\mathcal{S}}$ is the restriction-product homomorphism given in \eqref{psimap2}.
\end{lemma}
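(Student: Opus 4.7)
The plan is to translate everything into the wreath product description $C_{{\rm Sym}(S)}(\sigma_{\mathcal{S}}) \cong ({\mathbb{Z}}/d{\mathbb{Z}}) \wr {\rm Sym}({\mathcal{S}})$ from \eqref{wreathisom2}, and to show that commutativity of $G$ with $\sigma_{\mathcal{T}}$, combined with $\sigma_{\mathcal{T}}^d = 1$ and $\sigma_{\mathcal{T}} \notin \langle \sigma_{\mathcal{S}} \rangle$, forces $\psi_{\mathcal{S}}(\tau)$ to lie in $m{\mathbb{Z}}/d{\mathbb{Z}}$ for some integer $m \geq 2$ dividing $d$. Writing $\sigma_{\mathcal{T}} = ((r_1, \ldots, r_k), \pi)$ and $\tau = ((s_1, \ldots, s_k), \tau')$ for a generic $\tau \in G$, expanding $\tau\sigma_{\mathcal{T}} = \sigma_{\mathcal{T}}\tau$ yields $\tau'\pi = \pi\tau'$ together with the coordinate identity
\[
s_i - s_{\pi(i)} = r_i - r_{\tau'(i)} \qquad \text{for all } i.
\]

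Next I verify that $\pi$ is non-trivial: otherwise the displayed identity reduces to $r_i = r_{\tau'(i)}$, and the transitivity of $G$ on ${\mathcal{S}}$ forces all the $r_i$ to agree, making $\sigma_{\mathcal{T}} = \sigma_{\mathcal{S}}^c$ for some $c$ and contradicting the hypothesis $\sigma_{\mathcal{T}} \notin \langle \sigma_{\mathcal{S}} \rangle$. Because $\tau'$ commutes with $\pi$ while $\tau'(G)$ acts transitively on ${\mathcal{S}}$, every $\pi$-orbit on ${\mathcal{S}}$ has the same size $m$, and $m \geq 2$. For each $\pi$-orbit $O$ set $R(O) = \sum_{i \in O} r_i \in {\mathbb{Z}}/d{\mathbb{Z}}$; summing the coordinate identity over $i \in O$ and using that $\tau'(O)$ is another $\pi$-orbit shows $R(O) = R(\tau'(O))$, so $R$ is a single constant $R$ by transitivity of $\tau'(G)$ on $\pi$-orbits. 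Expanding $\sigma_{\mathcal{T}}^d = 1$ in wreath coordinates then shows the first entry equals $(d/m)R$, forcing $R \in m{\mathbb{Z}}/d{\mathbb{Z}}$.

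For the main calculation, I choose a representative $i_O$ for each $\pi$-orbit $O$, set $\tilde{s}_O = s_{i_O}$, and define $a(O, \tau') \in \{0, 1, \ldots, m-1\}$ by $\tau'(i_O) = \pi^{a(O, \tau')}(i_{\tau'(O)})$. Iterating the coordinate identity gives an explicit expression for each $s_{\pi^j(i_O)}$ in terms of $\tilde{s}_O$ and the values $r_{\pi^\ell(i_O)},\,r_{\pi^\ell(\tau'(i_O))}$. Summing over $j \in \{0, \ldots, m-1\}$ and over $O$, then reindexing the $r$-terms using $a(O, \tau')$ and telescoping, produces a formula for $\psi_{\mathcal{S}}(\tau) = \sum_i s_i$ whose terms each lie in $m{\mathbb{Z}}/d{\mathbb{Z}}$: the contribution from the $\tilde{s}_O$'s appears with coefficient $m$, while the contribution from the $r_i$'s reduces (modulo explicit multiples of $m$) to a scalar multiple of $R$, which itself lies in $m{\mathbb{Z}}/d{\mathbb{Z}}$. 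Hence $\psi_{\mathcal{S}}(G) \subseteq m{\mathbb{Z}}/d{\mathbb{Z}}$, a proper subgroup of ${\mathbb{Z}}/d{\mathbb{Z}}$ since $m \geq 2$.

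The main obstacle is the bookkeeping in this last step: because $\tau'$ shuffles the orbit representatives $i_O$ by the amounts $a(O, \tau')$, one must carefully track the mod-$m$ arithmetic of the reindexing to verify that the $r_i$-contribution really does emerge as a multiple of $R$ plus an evident multiple of $m$. Everything else — the existence of the invariant $R$, the non-triviality of $\pi$, the common $\pi$-orbit size $m$, and its role as a divisor of $d$ — is forced in a few lines by the wreath product structure and the transitivity of $G$ on $S$.
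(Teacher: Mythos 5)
Your proposal is correct, and it reaches the same conclusion as the paper's proof, but by a more computational route. Both arguments ultimately partition the $\sigma_{\mathcal{S}}$-orbits (i.e., the elements of ${\mathcal{S}}$) according to the action of $\sigma_{\mathcal{T}}$: your $\pi$-orbits coincide with the images on ${\mathcal{S}}$ of the orbits of $H = \langle \sigma_{\mathcal{S}}, \sigma_{\mathcal{T}} \rangle$ that the paper uses, and your $m$ equals the paper's $r = \min\{i \geq 1 : \sigma_{\mathcal{T}}^i \in \langle \sigma_{\mathcal{S}} \rangle\}$ once $G$ is transitive (your own argument, applied to $\sigma_{\mathcal{T}}^m$, shows $\sigma_{\mathcal{T}}^m \in \langle \sigma_{\mathcal{S}} \rangle$, giving $r \leq m$, while $r \mid m$ is automatic). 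The difference is in how the sum $\psi_{\mathcal{S}}(\tau)$ is evaluated. The paper exploits the freedom, already noted below \eqref{psimap2}, to choose the distinguished elements $s_i$ of each $\sigma_{\mathcal{S}}$-orbit: inside each $H$-orbit it takes them to be $\sigma_{\mathcal{T}}$-translates $\beta_i, \sigma_{\mathcal{T}}(\beta_i), \ldots, \sigma_{\mathcal{T}}^{r-1}(\beta_i)$ of one another. With that choice, the restriction of a given $g \in G$ is (essentially) constant across the $r$ indices of each $H$-orbit, so its contribution to $\psi_{\mathcal{S}}(g)$ is visibly a multiple of $r$ with no reindexing at all. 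You instead work with an arbitrary choice of representatives $i_O$, which is why the offsets $a(O,\tau')$ appear and the final step requires the mod-$m$ bookkeeping you flag. Your introduction of the invariant $R = \sum_{i \in O} r_i$ and the use of $\sigma_{\mathcal{T}}^d = 1$ to show $R \in m{\mathbb{Z}}/d{\mathbb{Z}}$ is precisely the extra datum needed to absorb the error terms created by the arbitrary representatives; the paper sidesteps it. (The paper's own phrasing ``$g|_t = u$ for each of the $r$ choices of $t$'' is slightly too strong unless one first normalizes $\sigma_{\mathcal{T}}$ so that $\sigma_{\mathcal{T}}^r = 1$, which is always possible since $\sigma_{\mathcal{T}}^r = \sigma_{\mathcal{S}}^w$ with $r \mid w$; but its conclusion is correct.) I checked that your reindexing does close up: after summing, the $\tilde{s}_O$-contribution carries coefficient $m$, the $A_O$-type sums cancel by the permutation $O \mapsto \tau'(O)$, and what remains is $R \sum_O a(O,\tau')$ plus an explicit multiple of $m$. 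So your plan is sound; the main thing you'd want to add in a final write-up is the explicit reindexing computation, or better, adopt the paper's choice of representatives, which eliminates it.
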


\begin{proof}
By hypothesis the subgroup $H = \langle \sigma_{\mathcal{S}}, \sigma_{\mathcal{T}} \rangle$ of ${\rm Sym}(S)$ is abelian. Moreover, $G \leq C_{{\rm Sym}(S)}(\sigma_{\mathcal{S}}) \cap C_{{\rm Sym}(S)}(\sigma_{\mathcal{T}})$, and it follows that the orbits of $H$ form yet another fiber system for the action of $G$ on $S$. Put 
$$r = \min\{i \geq 1 : \sigma_{\mathcal{T}}^i \in \langle \sigma_{\mathcal{S}} \rangle \}.$$
Note that $\sigma_{\mathcal{T}}^i \in \langle \sigma_{\mathcal{S}} \rangle$ implies $\sigma_{\mathcal{T}}^{\gcd{(i,d)}} \in \langle \sigma_{\mathcal{S}} \rangle$, since $\sigma_{\mathcal{T}}^d = 1$. Therefore $r \mid d$, and moreover $r > 1$ by hypothesis. Note also that $|H| = rd$. 

We claim that $\psi_{\mathcal{S}}(G) \in \langle r \rangle \leq {\mathbb{Z}}/d{\mathbb{Z}}$. Let $B$ be a set of distinguished elements, one for each orbit of $\sigma_{\mathcal{S}}$. Now $H$ acts on $S$, and each orbit of this action consists of a disjoint union of $r$ orbits of $\sigma_{\mathcal{S}}$, which may be written as follows:
$$\begin{array}{cccc}
\beta_i & \sigma_{\mathcal{S}}(\beta_i) &  \ldots & \sigma_{\mathcal{S}}^{d-1}(\beta_i) \\
\sigma_{\mathcal{T}}(\beta_i) & \sigma_{\mathcal{T}}(\sigma_{\mathcal{S}}(\beta_i)) & \ldots & \sigma_{\mathcal{T}}(\sigma_{\mathcal{S}}^{d-1}(\beta_i)) \\
\vdots & \vdots && \vdots \\
\sigma_{\mathcal{T}}^{r-1}(\beta_i) & \sigma_{\mathcal{T}}^{r-1}(\sigma_{\mathcal{S}}(\beta_i)) & \ldots & \sigma_{\mathcal{T}}^{r-1}(\sigma_{\mathcal{S}}^{d-1}(\beta_i))
\end{array} 
$$
where $\beta_i, \sigma_{\mathcal{T}}(\beta_i), \ldots, \sigma_{\mathcal{T}}^{r-1}(\beta_i)$ may be assumed without loss of generality to lie in $B$. 
Let $g \in G$, and suppose that $g(\beta_i) = \sigma_{\mathcal{S}}^u \sigma_{\mathcal{T}}^v(\beta_j)$, where $0 \leq u \leq {d-1}$, $0 \leq v \leq r-1$, and $\beta_j \in B$. Then for each $s$ with $0 \leq s \leq r-1$, we have 
$$g(\sigma_{\mathcal{T}}^s(\beta_i)) = \sigma_{\mathcal{T}}^s(g(\beta_i)) = \sigma_{\mathcal{S}}^u \sigma_{\mathcal{T}}^{v+s}(\beta_j),$$
 and hence considering the restriction map with respect to ${\mathcal{S}}$ we obtain $g|_t = u$ for each of the $r$ choices of $t$ given by the elements of $\{\beta_i, \sigma_{\mathcal{T}}(\beta_i), \ldots, \sigma_{\mathcal{T}}^{r-1}(\beta_i)\}$. Since the same holds for every orbit of $H$, we get $\psi_{\mathcal{S}}(\tau) \in \langle r \rangle$. 
\end{proof}

\begin{lemma} \label{martlem2}
Let $K$ be a global field containing a primitive $d$th root of unity $\zeta_d$, let $f(z) = z^d + c \in K[z]$, and let $g(z) \in K[z]$ be monic.  Suppose that ${\mathcal{S}}$ is the fundamental cyclic fiber system for the action of $G_n$ on $T_n$, for some $n \geq 1$. If $\psi_{\mathcal{S}}(G_n)$ is a proper subgroup of ${\mathbb{Z}}/d{\mathbb{Z}}$, then $(-1)^{\epsilon} g(f^n(0))$ is an $r$th power in $K$ for some $r > 1$ with $r \mid d$, where $\epsilon = 1$ if $d$ is even, $n = 1$, and $\deg g$ is odd, and $\epsilon = 0$ otherwise.  
\end{lemma}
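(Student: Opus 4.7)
The plan is to build a Galois eigenvector from the product of distinguished fiber representatives, and translate the hypothesis on $\psi_{\mathcal{S}}(G_n)$ into a root-extraction relation for $g(f^n(0))$ over $K$.

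Concretely, I set $t = \deg(g \circ f^{n-1}) = \#\mathcal{S}$, choose a distinguished root $\beta_i$ in each fiber $S_i$, and form $P = \prod_{i=1}^{t}\beta_i$. Since $\sigma_{\mathcal{S}}$ is multiplication by $\zeta_d$, the description of the restriction in \eqref{restrictiondef} forces $\tau(\beta_i) = \zeta_d^{r_i(\tau)}\beta_{\tau'(i)}$ for every $\tau \in G_n$, where $\tau'$ is the induced permutation of $\mathcal{S}$. Because $\tau'$ simply reshuffles the indices, this gives
\[
\tau(P) = \prod_{i=1}^{t}\zeta_d^{r_i(\tau)}\beta_{\tau'(i)} = \zeta_d^{\sum_i r_i(\tau)}P = \zeta_d^{\psi_{\mathcal{S}}(\tau)}P,
\]
so $P$ is a $G_n$-eigenvector with character $\tau \mapsto \zeta_d^{\psi_{\mathcal{S}}(\tau)}$.

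The hypothesis puts $\psi_{\mathcal{S}}(G_n)$ inside some proper subgroup $\langle r \rangle \leq \mathbb{Z}/d\mathbb{Z}$ with $r > 1$ and $r \mid d$; writing $m = d/r$, every $\tau \in G_n$ fixes $P^m$, so $P^m \in K$. Next I compute $P^d$ two ways. Grouping the $dt$ roots of $g \circ f^n$ according to their fibers under $f$ and using $\prod_{k=0}^{d-1}\zeta_d^k = (-1)^{d+1}$ gives $\prod_{\beta}\beta = (-1)^{(d+1)t}P^d$, while reading off the constant term of the monic polynomial $g \circ f^n$ gives $\prod_{\beta}\beta = (-1)^{dt}g(f^n(0))$. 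Comparing yields $P^d = (-1)^t g(f^n(0))$, so
\[
(-1)^t g(f^n(0)) = (P^m)^r
\]
is an $r$-th power in $K$.

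What remains is to reconcile the sign $(-1)^t$ with $(-1)^{\epsilon}$ in each case defining $\epsilon$. When $d$ is even and $n \geq 2$, $t = d^{n-1}\deg g$ is automatically even while $\epsilon = 0$; when $d$ is even and $n = 1$, one has $t = \deg g$ and the parities of $t$ and $\epsilon$ agree by the definition of $\epsilon$; and when $d$ is odd, $r$ is odd so $-1 = (-1)^r$ is itself an $r$-th power, rendering the sign harmless, and again $\epsilon = 0$. In each case $(-1)^{\epsilon}g(f^n(0))$ is an $r$-th power in $K$, as required. I do not anticipate a substantive obstacle; the only care needed is keeping the parity bookkeeping straight in this final reconciliation.
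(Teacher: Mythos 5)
Your proof is correct and takes essentially the same route as the paper: you form the product $P$ of distinguished fiber representatives, observe that $\tau(P)=\zeta_d^{\psi_{\mathcal S}(\tau)}P$, deduce from $\psi_{\mathcal S}(G_n)\subseteq\langle r\rangle$ that $P^{d/r}\in K$, and then compare $P^d$ with the constant term of $g\circ f^n$ to conclude; the only cosmetic difference from the paper is that you track the sign via $(-1)^{\deg(g\circ f^{n-1})}$ rather than via $(-1)^{\deg(g\circ f^n)}\cdot\prod_{\beta\in B}\zeta_d^{(d-1)d/2}$, and your parity bookkeeping matches the paper's $\epsilon$.
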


\begin{proof}
Suppose that $\psi_{\mathcal{S}}(G_n) = \langle r \rangle$, with $r \mid d$ and $r > 1$, and let $k := \deg(g \circ f^n)$ be the number of sets constituting the partition ${\mathcal{S}}$. Let $B = \{\beta_1, \ldots, \beta_k\}$ be a set of distinguished elements, one from each element of ${\mathcal{S}}$. Given $\tau \in G_n$ and $\beta_i \in B$, let $q_i$ be such that $\tau(\beta_i) = \beta_j\zeta_d^{q_i}$ for some $\beta_j \in B$. Because $\psi_{\mathcal{S}}(G_n) = \langle r \rangle$, we have that $\sum_{i=1}^k q_i = rs$ for some integer $s$.   Now
\[\tau \left( \prod_{\beta \in B} \beta \right)^{d/r} = \left( \prod_{\beta \in B} \beta \right)^{d/r} \cdot ((\zeta_d^{q_1 + \cdots + q_k})^{d/r})  =  \left( \prod_{\beta \in B} \beta \right)^{d/r}.\]
This holds for all $\tau \in G_n$, showing that $(\prod_{\beta \in B} \beta)^{d/r}$ is in the fixed field of $G_n$, and thus lies in $K$. Therefore $(\prod_{\beta \in B} \beta)^{d}$ is an $r$th power in $K$. On the other hand, the product of all roots of $g \circ f^{n}$ is
\begin{equation} \label{allroots}
	\prod_{\beta \in B} \prod_{i = 0}^{d-1} \zeta_d^i \beta = \prod_{\beta \in B} \zeta_d^{(d-1)d/2} \left(\prod_{\beta \in B} \beta \right)^{d}.
\end{equation}
Now $\prod_{\beta \in B} \zeta_d^{(d-1)d/2}$ is $-1$ if $d$ is even and $\#B$ is odd, and $1$ otherwise. But $\#B = \#{\mathcal{S}} = \deg (g \circ f^{n-1})$, and this is odd when $d$ is even only if $n = 1$ and $\deg g$ is odd. Hence the right-hand side of \eqref{allroots} is $(-1)^{\epsilon} (\prod_{\beta \in B} \beta)^{d}$. Finally, the product of all roots of $g \circ f^{n}$ is $(-1)^kg(f^n(0))$, where $k = \deg (g \circ f^{n})$. We thus obtain that $(-1)^{\epsilon + k}g(f^n(0))$ is an $r$th power in $K$.  If $d$ is odd, then this is an $r$th power in $K$ if and only if $g(f^n(0))$ is an $r$th power in $K$. If $d$ is even, then $(-1)^k = 1$. 
\end{proof}

\begin{lemma}\label{rafeisahiphopdog}
Let $K$ be a global field containing a primitive $d$th root of unity $\zeta_d$, let $f(z) = z^d + c \in K[z]$, and let $g(z) \in K[z]$ be monic with $g \circ f^{n}$ irreducible for some $n \geq 1$. Let ${\mathcal{S}}$ be the fundamental cyclic fiber system for the action of $G_n$ on $T_n$, $\sigma_{\mathcal{S}}$ the associated permutation, and $\epsilon$ as in Lemma \ref{martlem2}. If $(-1)^{\epsilon} g(f^n(0))$ is not an $r$th power in $K$ for any $r \mid d$ with $r > 1$, and the center of $G_n$ has an element of order $i$ with $i \mid d$, then $\sigma_{\mathcal{S}}^{d/i} \in G_n$. 
\end{lemma}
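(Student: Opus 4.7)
The plan is to combine Lemmas \ref{martlem1} and \ref{martlem2}: the non-power hypothesis on $(-1)^\epsilon g(f^n(0))$ forces $\psi_{\mathcal{S}}(G_n)$ to equal all of $\mathbb{Z}/d\mathbb{Z}$ (by the contrapositive of Lemma \ref{martlem2}), while a nontrivial center element would, unless it already lives in $\langle \sigma_{\mathcal{S}} \rangle$, produce a genuinely different cyclic fiber system and thereby shrink $\psi_{\mathcal{S}}(G_n)$ via Lemma \ref{martlem1}. Comparing orders then pins down $\sigma_{\mathcal{S}}^{d/i}$ as a power of that center element.

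More precisely, let $\omega \in Z(G_n)$ have order $i$. The case $i = 1$ is trivial since $\sigma_{\mathcal{S}}^{d} = 1 \in G_n$, so assume $i > 1$. First I would argue that $({\mathcal{O}}_\omega, \omega)$ is a legitimate central cyclic fiber system with orbits of size exactly $i$: since $g \circ f^{n}$ is irreducible, $G_n$ acts transitively on $T_n$, and because $\omega$ is central, transitivity forces all $\omega$-orbits on $T_n$ to share a common size $k$ dividing $i$. Faithfulness of the $G_n$-action on $T_n$ (standard, as $T_n$ generates $K_n/K$) then gives $\omega^k = 1$, hence $k = i$. Since $i \mid d$, we also have $\omega^d = 1$.

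Next, by Lemma \ref{martlem2} applied contrapositively, the hypothesis that $(-1)^\epsilon g(f^n(0))$ is not an $r$th power in $K$ for any $r \mid d$ with $r > 1$ implies $\psi_{\mathcal{S}}(G_n) = \mathbb{Z}/d\mathbb{Z}$. Now apply Lemma \ref{martlem1} with $({\mathcal{T}}, \sigma_{\mathcal{T}}) = ({\mathcal{O}}_\omega, \omega)$: the element $\omega \in G_n \leq C_{{\rm Sym}(T_n)}(\sigma_{\mathcal{S}})$ commutes with $\sigma_{\mathcal{S}}$, and $\omega^d = 1$. If we had $\omega \notin \langle \sigma_{\mathcal{S}} \rangle$, the lemma would yield $\psi_{\mathcal{S}}(G_n) \lneq \mathbb{Z}/d\mathbb{Z}$, contradicting what we just established. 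Therefore $\omega \in \langle \sigma_{\mathcal{S}}\rangle$.

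To finish, observe that $\sigma_{\mathcal{S}}$ has order $d$ (its orbits on $T_n$ are the $d$-element fibers of $f$), so the cyclic group $\langle \sigma_{\mathcal{S}}\rangle \cong \mathbb{Z}/d\mathbb{Z}$ has a unique subgroup of order $i$, namely $\langle \sigma_{\mathcal{S}}^{d/i} \rangle$. Since $\omega \in \langle \sigma_{\mathcal{S}}\rangle$ has order exactly $i$, we must have $\langle \omega \rangle = \langle \sigma_{\mathcal{S}}^{d/i}\rangle$, so $\sigma_{\mathcal{S}}^{d/i}$ is a power of $\omega$ and hence lies in $G_n$. The main subtlety in the plan is the verification that $\omega$ really does satisfy all the hypotheses of Lemma \ref{martlem1}, and in particular that its orbits all have the same size; the rest is a clean application of the earlier lemmas and elementary cyclic group theory.
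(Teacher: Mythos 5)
Your proposal is correct and follows essentially the same route as the paper's proof: invoke Lemma \ref{martlem2} contrapositively to force $\psi_{\mathcal{S}}(G_n)=\mathbb{Z}/d\mathbb{Z}$, use Lemma \ref{martlem1} to rule out $\omega\notin\langle\sigma_{\mathcal{S}}\rangle$, and then finish by elementary cyclic-group arithmetic. The extra verification you give that the $\omega$-orbits all have the same size (and that this size equals $i$) is a reasonable sanity check, but it is not actually needed for the argument; the paper establishes that $({\mathcal O}_\omega,\omega)$ is a central cyclic fiber system in the preceding discussion, and $\omega^d=1$ already follows directly from $i\mid d$, which is all Lemma \ref{martlem1} requires.
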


\begin{proof}
Let $\omega \in Z(G_n)$ have order $i$ with $i \mid d$, and let $({\mathcal{O}}_\omega, \omega)$ be the corresponding central cyclic fiber system. Because $i \mid d$, we have $\omega^d = 1$. Because $\omega \in G_n$ and $G_n \leq C_{{\rm Sym}(S)}(\sigma_{\mathcal{S}})$, we have that $\omega$ and $\sigma_{\mathcal{S}}$ commute. If $\omega \not\in \langle \sigma_{\mathcal{S}} \rangle$, then by Lemma \ref{martlem1} we have that $\psi_{\mathcal{S}}(G)$ is a proper subgroup of ${\mathbb{Z}}/d{\mathbb{Z}}$, which is impossible by Lemma \ref{martlem2}. Hence $\omega = \sigma_{\mathcal{S}}^j$ for some $j$. Because $|\omega| = i$ and $|\sigma_{\mathcal{S}}| = d$, there is a power of $\omega$ that gives 
$\sigma_{\mathcal{S}}^{d/i}$, and the lemma is proven.
\end{proof}

\begin{proof}[Proof of Theorem \ref{mart2}] 
We are assuming that $d$ is prime and $(-1)^{\epsilon}g(f^n(0))$ is not a $d$th power in $K$, and hence from Theorem \ref{firststab} we have that $g \circ f^{n}$ is irreducible over $K$ for all $n \geq 1$.  Moreover, the hypotheses that $d$ is prime and $g$ divides an iterate of $f$ imply that $K_n$ is formed from $K$ by repeatedly taking extensions of degree $d$, and hence for each $n \geq 1$, $G_n$ is a $d$-group.  Therefore the center of $G_n$ is non-trivial, and thus it must contain an element of order $d$. By Lemma \ref{rafeisahiphopdog}, we then have $\sigma_{{\mathcal{S}}} \in G_n$.  But $\sigma_{{\mathcal{S}}}$ fixes $K_{n-1}$ and acts on the roots of $f(z) - \alpha$ as a $d$-cycle, where $\alpha$ is any root of $g \circ f^{n-1}$. Hence $f(z)-\alpha$ is irreducible over $K_{n-1}$. This conclusion holds for all $n \geq 1$, and thus the theorem follows from Theorem 2.5 in \cite{quaddiv}.
\end{proof}

\section{Maximality Results and Proof of the Main Theorem} \label{max}

In this section we generalize a result of Stoll to give a criterion ensuring that the kernel of the projection $G_n \to G_{n-1}$ is as large as possible.  We then apply Siegel's theorem on integral points to certain curves to derive Theorem \ref{main1}.  Let notation and assumptions be as in Section \ref{GP(f,g)}. For $n \geq 1$, $K_n$ is obtained from $K_{n-1}$ by adjoining the $d$th roots of $m$ elements of $K_{n-1}$, where $m=\deg(g \circ f^{n-1})$. Setting $H_n={\rm Gal\,}(K_n/K_{n-1})$, we thus have an injection 
\[H_n \hookrightarrow ({\mathbb{Z}}/d{\mathbb{Z}})^m.\]
We call $H_n$ \textit{maximal} if this map is an isomorphism. 

\begin{lemma} \label{maxone}
Let $d \geq 2$ be an integer and let $K$ be a field of characteristic not dividing $d$ and containing a primitive $d$th root of unity. Let $f(z) = z^d + c \in K[z]$, and let $g(z) \in K[z]$ divide an iterate of $f$. Suppose that $n \geq 2$ and $g \circ f^{n-1}$ is irreducible over $K$. Then $H_n$ is maximal if and only if $g(f^n(0))$ is not a $p$-th power in $K(g \circ f^{n-1})$ for any prime $p \mid d$.
\end{lemma}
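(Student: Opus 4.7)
The approach is pure Kummer theory. Write $L:=K_{n-1}$ (which contains $\zeta_d$) and $\gamma_i := \alpha_i - c$ for the distinct roots $\alpha_1,\ldots,\alpha_m$ of the irreducible polynomial $g\circ f^{n-1}$. Since any $d$-th root $\beta$ of $\gamma_i$ satisfies $f(\beta)=\alpha_i$, we have $K_n=L(\sqrt[d]{\gamma_1},\ldots,\sqrt[d]{\gamma_m})$. Standard Kummer theory says $H_n$ is maximal iff the map $\phi\colon(\mathbb{Z}/d\mathbb{Z})^m\to L^*/(L^*)^d$ sending $(e_1,\ldots,e_m)\mapsto\prod_i\gamma_i^{e_i}$ is injective. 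Lifting a minimal-order element of $\ker\phi$ and using that each $(d/p)$-th root of unity in $L$ is a $p$-th power (because $\zeta_{d/p}=\zeta_d^p$) shows $\phi$ is injective if and only if for every prime $p\mid d$ the reduction $\bar\phi_p\colon\mathbb{F}_p^m\to L^*/(L^*)^p$ is injective.

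The key identity is
\[
\prod_{i=1}^m\gamma_i \;=\; (-1)^m g\!\left(f^{n-1}(c)\right) \;=\; (-1)^m g\!\left(f^n(0)\right).
\]
Since $n\geq 2$, we have $d\mid m=\deg(g)\,d^{n-1}$, so $(-1)^m$ is a $p$-th power in $L$ for every prime $p\mid d$ (trivially $1$ when $d$ is even, because $m$ is even; and $-1=(-1)^p$ when $p$ is odd). Hence $\prod_i\gamma_i\in(L^*)^p$ if and only if $g(f^n(0))\in(L^*)^p$. The easy direction is now immediate: if $g(f^n(0))\in(L^*)^p$ for some $p\mid d$, then $\prod_i\gamma_i^{d/p}\in(L^*)^d$, so the nonzero element $(d/p,\ldots,d/p)$ lies in $\ker\phi$, and $H_n$ fails to be maximal.

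For the hard direction, assume $H_n$ is not maximal; then $V:=\ker\bar\phi_p\subseteq\mathbb{F}_p^m$ is nonzero for some prime $p\mid d$. Transitivity of the $G_{n-1}$-action on $\{\alpha_i\}$ (from irreducibility of $g\circ f^{n-1}$) makes $V$ a $G_{n-1}$-stable subspace. The goal is $\mathbf{1}=(1,\ldots,1)\in V$, which translates to $g(f^n(0))\in(L^*)^p$. For every $v\in V$, Galois-averaging yields
\[
\sum_{\sigma\in G_{n-1}}\sigma(v)\;=\;\tfrac{|G_{n-1}|}{m}\Bigl(\sum_i v_i\Bigr)\mathbf{1}\;\in\;V,
\]
using $(\mathbb{F}_p^m)^{G_{n-1}}=\mathbb{F}_p\,\mathbf{1}$ by transitivity; whenever this quantity is nonzero in $\mathbb{F}_p$ for some $v$, we conclude $\mathbf{1}\in V$. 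The main obstacle is the residual case where the averaging vanishes identically. To handle it we use that $G_{n-1}$ is a $d$-group: the tower $K_{n-1}/K$ is built from Kummer $d$-extensions over the splitting field of $g$, which (being a divisor of an iterate of $f$) itself splits in such a tower over $K$. One then passes to a $p$-Sylow subgroup $P\leq G_{n-1}$, for which $V^P\neq 0$ automatically, and descends via averaging over the $p$-regular quotient $G_{n-1}/P$, invoking the specific Kummer-kernel structure of $V$ to rule out the non-trivial semisimple $G_{n-1}/P$-summands of $V^P$ and so force a nonzero element of $V^{G_{n-1}}=V\cap\mathbb{F}_p\,\mathbf{1}$; this element is a scalar multiple of $\mathbf{1}$, completing the proof.
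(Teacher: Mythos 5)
The setup and easy direction of your proof are correct and match the paper: you identify $K_n/K_{n-1}$ as a $d$-Kummer extension generated by $d$th roots of $\gamma_i=\alpha_i-c$, reduce (cleanly) to the $\mathbb{F}_p$-maps $\bar\phi_p$ using $\zeta_{d/p}=\zeta_d^p$, and handle the sign $(-1)^m$ correctly since $d\mid m$ when $n\geq 2$. The ``only if'' direction (a $p$th power forces a non-trivial kernel via $(d/p,\dots,d/p)$) is fine.

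The hard direction, however, has a genuine gap, and it is exactly where the paper's argument does its real work. Your Galois-averaging computation $\sum_{\sigma}\sigma(v)=\tfrac{|G_{n-1}|}{m}\bigl(\sum_i v_i\bigr)\mathbf{1}$ is correct, but the factor $|G_{n-1}|/m$ is the order of the stabilizer of a root of $g\circ f^{n-1}$, i.e.\ $[K_{n-1}:K(\alpha_1)]$. Since $G_{n-1}$ has order dividing a power of $d$ and $p\mid d$, this factor is \emph{typically} divisible by $p$ (already for $n\geq 3$, or whenever the splitting field is strictly larger than $K(\alpha_1)$ by a $p$-power factor), so the average vanishes identically and the argument stalls generically rather than in a residual corner case. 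Your proposed repair is not valid as stated: a Sylow $p$-subgroup $P\leq G_{n-1}$ need not be normal, so the ``$p$-regular quotient $G_{n-1}/P$'' is not a group and one cannot ``average over'' it; moreover, even granting a normal complement, the passage from $V^P\neq 0$ to $V^{G_{n-1}}\neq 0$ requires showing that the trivial representation actually occurs in $V^P$, and the appeal to ``the specific Kummer-kernel structure of $V$'' is an assertion, not an argument. This is precisely the content the paper isolates as Lemma~\ref{mginv}, which it proves by induction along a normal series of $G_n$ with cyclic quotients of prime order dividing $d$ (working with the $(\mathbb{Z}/d\mathbb{Z})[G_n]$-module of Kummer relations), rather than by passing to a Sylow subgroup. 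Without an argument of that type, your proof of the ``if'' direction is incomplete.
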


\begin{remark}
The lemma is false if $K$ does not contain a primitive $d$th root of unity. For instance, let $K = {\mathbb{Q}}$, $f(z) = z^3 + 1$, and $g(z) = z^2 - z + 1$, which divides $f(z)$. Then $g(f(z)) = z^6 + z^3 + 1$ is the $9$th cyclotomic polynomial, and hence is irreducible over ${\mathbb{Q}}$. Thus $G_1$ has order $6$ while a computer algebra system verifies that $G_2$ has order $2 \cdot 3^5$, whence $H_2$ has order $3^4$. However, $K_1 = {\mathbb{Q}}(\zeta_9)$, and one checks that $g(f^2(0)) = 3$ is not a cube in $K_1$.
\end{remark}

\begin{proof} This is an adaptation of Lemma 3.2 of \cite{quaddiv}, and thus is a generalization of Lemma 1.6 of \cite{stoll}.  Let $m = \deg(g \circ f^{n-1})$, and denote the roots of $g \circ f^{n-1}$ by $\beta_i$ for $i = 1, \dots, m$.  Note that $K_n$ is obtained by adjoining to $K_{n-1}$ the $d$-th roots of $\beta_i - c$ for $i = 1, \dots, m$, and hence $K_n/K_{n-1}$ is a $d$-Kummer extension. Moreover, since $n \geq 2$, $K_{n-1}$ contains $K_1$, and hence contains a primitive $d$th root of unity. Thus $[K_n : K_{n-1}] \leq d^m$. By \cite[Theorem 8.1, p. 295]{langalg}, $[K_n:K_{n-1}] = (B : K_{n-1}^{*d})$, where $B$ is the multiplicative subgroup generated by $\{\sqrt[d]{\beta_i - c} : i = 1, \ldots, m\}$ together with $K_{n-1}^{*d}$. It follows that $[K_n : K_{n-1}] < d^m$ if and only if there is a non-zero $(\epsilon_1, \dots, \epsilon_m) \in ({\mathbb{Z}}/d{\mathbb{Z}})^m$ such that $\prod^m_{i=1} (\beta_i - c)^{\epsilon_i}$ is a $d$-th power in $K_{n-1}$.  

By the irreducibility of $g \circ f^{n-1}$, we have that $G_n = {\rm Gal\,}(K_{n}/K)$ acts transitively on the $\beta_i$.  We then let $M$ be the $({\mathbb{Z}}/d{\mathbb{Z}})[G_n]$-module of all $(\epsilon_1, \dots, \epsilon_m) \in ({\mathbb{Z}}/d{\mathbb{Z}})^m$ such that $\prod^m_{i=1} (\beta_i - c)^{\epsilon_i}$ is a $d$-th power in $K_{n-1}$, where $G_n$ acts by permuting coordinates according to the action on the $\beta_i$.  From Lemma \ref{mginv} we have that $M \neq 0$ if and only if $M$ contains a $G_n$-invariant element.  By the transitivity of the action of $G_n$ on the $\beta_i$, such an element must have the form $(w, \dots, w)$ for some non-zero $w \in {\mathbb{Z}}/d{\mathbb{Z}}$.  Therefore $H_n $ is maximal if and only if $\prod^m_{i=1} (\beta_i - c) = (-1)^m g(f^{n-1}(c)) = (-1)^m g(f^n(0))$ is not an $r$-th power in $K_{n-1}$ for any $r \mid d$ (we can take $r = d/w'$ in the previous paragraph, where $w'$ is a divisor of $d$ generating $\langle w \rangle \leq {\mathbb{Z}} / d{\mathbb{Z}}$). Note that $m$ and $d$ must have the same parity, because we assume $n \geq 2$, so $(-1)^m$ is necessarily a $d$-th power. This proves the lemma. \end{proof}

Note also that $G_n$ is solvable, for it is a subgroup of the Galois group $B_{n+j}$ of $f^{n+j}(x)$ over $K$. If we let $N_i$ be the kernel of the restriction homomorphism $B_{n+j} \to B_{n+j-i}$, then clearly the $N_i$ form an ascending chain of normal subgroups of $B_{n+j}$, and moreover $N_{i}/N_{i-1}$ is isomorphic to the kernel of the restriction map $B_i \to B_{i-1}$, which is of the form $({\mathbb{Z}}/d{\mathbb{Z}})^{k_i}$. 

\begin{lemma} \label{mginv}
Let $G$ be a non-trivial solvable group whose order divides a power of $d$, and let $M \neq 0$ be a $({\mathbb{Z}}/d{\mathbb{Z}})[G]$-module.  Then the submodule $M^G$ of $G$-invariant elements is non-trivial.
\end{lemma}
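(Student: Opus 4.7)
The plan is to argue by induction on $|G|$, reducing ultimately to the case where $G$ is cyclic of prime order $p$ dividing $d$. First I would reduce to the case where $M$ is finite: replacing $M$ by the cyclic submodule $({\mathbb{Z}}/d{\mathbb{Z}})[G]\cdot m$ generated by any single nonzero element $m \in M$ yields a nonzero submodule that is finite, since both $G$ and ${\mathbb{Z}}/d{\mathbb{Z}}$ are finite.

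The inductive step exploits solvability. Suppose $G$ admits a proper nontrivial normal subgroup $N$. Both $N$ and $G/N$ are then nontrivial solvable groups whose orders still divide a power of $d$. Applying the inductive hypothesis to $N$ acting on $M$ yields $M^N \neq 0$. Because $N \triangleleft G$, the subgroup $M^N$ inherits a natural $({\mathbb{Z}}/d{\mathbb{Z}})[G/N]$-module structure, and a second application of the inductive hypothesis, this time to $G/N$ acting on $M^N$, produces $(M^N)^{G/N} \neq 0$. Identifying this with $M^G$ closes the induction.

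This reduces the problem to the base case where $G$ admits no proper nontrivial normal subgroup. Being nontrivial and solvable, $G$ must then be cyclic of prime order $p$, and the hypothesis $|G| \mid d^k$ forces $p \mid d$. For the base case I would pass to the $p$-torsion submodule $M[p] = \{m \in M : pm = 0\}$, which is $G$-invariant and has the natural structure of an $\mathbb{F}_p$-vector space. The classical fixed-point theorem for $p$-groups---orbits of a $p$-group on a finite $G$-set have cardinality a power of $p$, so $|M[p]^G| \equiv |M[p]| \pmod{p}$---then yields $|M[p]^G| \geq p$ whenever $M[p] \neq 0$, producing a nonzero element of $M^G \supseteq M[p]^G$.

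The main obstacle is guaranteeing that $M[p] \neq 0$ at the base case: a priori the relevant submodule could have order coprime to $p$, in which case the $p$-torsion is trivial and the argument above collapses. To address this I would not fix a composition series of $G$ in advance, but instead organize the filtration so that the cyclic prime-order quotient arrived at in the base case corresponds to a prime $p$ for which the residual submodule has nontrivial $p$-torsion. Decomposing $M = \bigoplus_{q} M_q$ into its $q$-primary components (each $G$-invariant) and applying the induction to an appropriate component, one uses the hypothesis $|G| \mid d^k$---which ensures that every composition prime of $G$ divides $d$---to match a prime appearing in the composition series of $G$ with a nontrivial primary component of $M$. Carrying this matching through the inductive filtration of $G$ is the delicate bookkeeping on which the whole argument hinges.
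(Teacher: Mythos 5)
Your inductive framework mirrors the paper's: both reduce along a composition series of $G$ with cyclic quotients of prime order dividing $d$, and diverge only at the base case where $G=\langle\sigma\rangle$ is cyclic of order $p\mid d$. The paper handles the base case by building a chain $y_j=\sum_{\ell=0}^{d-j}\binom{d-\ell-1}{d-\ell-j}\sigma^{\ell}y$ and asserting the recurrence $\sigma y_j=y_{j-1}+y_j$, from which the first nonzero $y_j$ is automatically $\sigma$-fixed; you instead pass to the $p$-torsion $M[p]$ and invoke the standard orbit-counting congruence for $p$-groups. Your route is the more standard one, but as you rightly observe it requires $M[p]\neq 0$.

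That requirement is exactly where both arguments break, and the ``delicate bookkeeping'' you describe cannot be carried out: the lemma as stated is in fact false whenever $d$ has two distinct prime factors. Take $d=6$, $G=\langle\sigma\rangle\cong\mathbb{Z}/2\mathbb{Z}$, and $M=\mathbb{Z}/3\mathbb{Z}$ regarded as a $\mathbb{Z}/6\mathbb{Z}$-module through the quotient map, with $\sigma(x)=-x$. Then $|G|=2\mid 6$ and $M\neq 0$, yet $\sigma x=x$ forces $2x=0$ and hence $x=0$ in $\mathbb{Z}/3\mathbb{Z}$, so $M^G=0$. Here $G$ has only the composition prime $2$ while $M$ is pure $3$-torsion, so there is literally nothing for your matching scheme to match. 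The paper's base case suffers from the same hidden defect: a direct coefficient comparison gives $\sigma y_j=y_{j-1}+y_j-\binom{d}{j-1}y$, and the extra term need not vanish once $d$ is composite (for instance $\binom{6}{3}=20\equiv 2\pmod 6$); in the example above one finds $y_1=y_2=y_3=0$ but $y_4\neq 0$ is not $\sigma$-fixed, and indeed no nonzero fixed vector exists. When $d=p$ is prime---the case used in Theorem~\ref{mart2}---the obstacle you identified evaporates, since $M$ is automatically $p$-torsion and $\binom{p}{i}\equiv 0\pmod p$ for $1\le i\le p-1$, and both your argument and the paper's then go through.
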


\begin{proof}
We induct on the length of the composition series 
\[G = G_0 > G_1 > \cdots > G_k = \{e\}\]
such that each of the quotients $G_i/G_{i-1}$ are cyclic of prime order dividing $d$. First, suppose $G$ is cyclic of prime order dividing $d$, and take $0 \neq y \in M$.  Let $\sigma$ generate $G$; if $\sigma y = y$, we are done.  Otherwise, define $y_j$ for $j = 1, \dots, d-1$ to be
\[y_j = \sum_{\ell = 0}^{d-j} {d-\ell-1 \choose d - \ell - j} \sigma^{\ell} y\]
First, note that $y_1 = y + \sigma y + \dots + \sigma^{d-1} y$, so $\sigma y_1 = y_1$.  Then, since ${d-\ell \choose d - \ell - j +1} = {d-\ell-1 \choose d - \ell - j} + {d-\ell-1 \choose d - \ell - j +1}$, we have that $\sigma y_j = y_{j-1} + y_j$ for $1 < j \leq d-1$.  

If $y_1 \neq 0$, then we are done; if on the other hand $y_1 = 0$, then $\sigma y_2 = y_2$.  Similarly, if $y_j = 0$ for all $j < j'$, then $\sigma y_{j'} = y_{j'}$.  But note that 
\[y_{d-1} = {d-1 \choose 1} y + {d-2 \choose 0} \sigma y = -y+\sigma y.\]
This cannot be 0 by our initial assumption, so it cannot be the case that all the $y_j$'s are 0.  Therefore $M^G$ is non-trivial if $M$ is non-trivial.

If $G$ is not cyclic of prime order, then let $N$ be a non-trivial, proper maximal normal subgroup of  $G$, and note that both $N$ and $G/N$ are solvable with order dividing a power of $d$, and the length of the composition series of $N$ is strictly less than the length of $G$. Then $M$ is also a $({\mathbb{Z}}/d{\mathbb{Z}})[N]$-module, and by the induction hypothesis we have $M^N \neq 0$.  But now $M^N$ is a non-trivial $({\mathbb{Z}}/d{\mathbb{Z}})[G/N]$-module, so $(M^N)^{G/N} = M^G \neq 0$, again by the inductive hypothesis.
\end{proof}

Although we don't use it in our main argument, it may be of interest to have a criterion in terms of the ground field $K$ that ensures the maximality of $H_n$. The proof is essentially identical to the proof of Theorem 3.3 of \cite{quaddiv}, and follows from Lemma 2.6 of \cite{quaddiv} and Lemma \ref{maxone}.

\begin{theorem}
Let $d \geq 2$ be an integer, $K$ a global field of characteristic not dividing $d$, $f(z) = z^d + c \in K[z]$, and $g(z) \in K[z]$ divide an iterate of $f$.  Suppose that $n \geq 2$ and $g \circ f^{n-1}$ is irreducible over $K$, and denote by $v_{\mathfrak{p}}(g(f^n(0)))$ the valuation corresponding to the place ${\mathfrak{p}}$ of $K$. If there exists ${\mathfrak{p}}$ with $v_{\mathfrak{p}}(g(f^n(0)))$ prime to $d$, $v_{{\mathfrak{p}}}(g(f^{i}(0))) = 0$ for all $1 \leq i \leq n-1$, and $v_{{\mathfrak{p}}}(d) = 0$, then $H_n$ is maximal.
\end{theorem}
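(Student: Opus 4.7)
The plan is to invoke Lemma \ref{maxone}, which reduces the maximality of $H_n$ to showing that $g(f^n(0))$ is not a $p$-th power in $K_{n-1}$ for any prime $p \mid d$. I will then prove this non-$p$-th-power assertion by a ramification-theoretic contradiction, using the divisibility conditions at $\mathfrak{p}$ imposed in the hypotheses.

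Suppose toward contradiction that $g(f^n(0)) = y^p$ for some prime $p \mid d$ and some $y \in K_{n-1}$. Fix any prime $\mathfrak{P}$ of $K_{n-1}$ lying over $\mathfrak{p}$, and let $e = e(\mathfrak{P}|\mathfrak{p})$ be its ramification index. Then
\[
p \cdot v_{\mathfrak{P}}(y) \;=\; v_{\mathfrak{P}}(g(f^n(0))) \;=\; e \cdot v_{\mathfrak{p}}(g(f^n(0))).
\]
Since by hypothesis $v_{\mathfrak{p}}(g(f^n(0)))$ is prime to $d$, and hence to $p$, this forces $p \mid e$. It therefore suffices to prove that $\mathfrak{p}$ is unramified in the extension $K_{n-1}/K$, so that $e = 1$ and we obtain a contradiction.

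The unramifiedness of $\mathfrak{p}$ in $K_{n-1}/K$ is precisely the content of Lemma 2.6 of \cite{quaddiv}, which is tailored to exactly the remaining two hypotheses: $v_{\mathfrak{p}}(g(f^i(0))) = 0$ for $1 \leq i \leq n-1$, and $v_{\mathfrak{p}}(d) = 0$. The strategy is to realize $K_{n-1}$ as a tower $K = K_0 \subset K_1 \subset \cdots \subset K_{n-1}$, where at the $i$-th stage one adjoins to $K_{i-1}$ the $d$-th roots of $\alpha - c$ for each root $\alpha$ of $g \circ f^{i-1}$. The product of these radicands is (up to sign) $g(f^i(0))$, so the hypothesis $v_{\mathfrak{p}}(g(f^i(0))) = 0$ forces every individual $\alpha - c$ to have trivial valuation at each prime of $K_{i-1}$ lying over $\mathfrak{p}$. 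Combined with $v_{\mathfrak{p}}(d) = 0$ (which guarantees the residue characteristic is prime to $d$), Lemma \ref{Kummer ramification} yields ramification index $d/\gcd(d,0) = d/d = 1$ at every step of the tower, whence $\mathfrak{p}$ is unramified throughout.

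The main technical obstacle is the inductive tracking of valuations up the tower: one must verify at each stage that $v_{\mathfrak{p}}$ extends in a controlled way and that the radicands retain trivial valuation. This is exactly what Lemma 2.6 of \cite{quaddiv} handles in the quadratic setting, and the very same argument applies here once one combines it with the degree-$d$ ramification computation provided by Lemma \ref{Kummer ramification}. With unramifiedness in hand, the valuation comparison above gives the required contradiction, and the theorem follows.
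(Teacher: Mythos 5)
Your proof is correct and follows essentially the same route the paper indicates: reduce via Lemma~\ref{maxone} to showing $g(f^n(0))$ is not a $p$-th power in $K_{n-1}$ for any prime $p \mid d$, then derive a contradiction from the valuation hypothesis once $\mathfrak{p}$ is shown to be unramified in $K_{n-1}/K$, the latter being the content of Lemma~2.6 of \cite{quaddiv}. The paper gives only a one-line pointer to these two ingredients, so your explicit ramification-index argument (that $g(f^n(0)) = y^p$ would force $p \mid e(\mathfrak{P}\mid\mathfrak{p})$) is a faithful and useful reconstruction of what that pointer elides. One caveat worth flagging, inherited from the paper's own statement: Lemma~\ref{maxone} requires $K$ to contain a primitive $d$th root of unity, and the remark following it gives a counterexample without that hypothesis; since the theorem as stated omits $\zeta_d \in K$, your invocation of Lemma~\ref{maxone} is strictly speaking unavailable, and the theorem should presumably be read with that assumption added.
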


\begin{remark}
Assuming the ABC-conjecture of Masser-Oesterl\'{e}-Szpiro, it is shown in \cite[Theorem 1.4]{gratton} that if $K$ is a number field, $f(z) = z^d + c$, and $O_f(0)$ is infinite, then for all but finitely many $n$, there is a prime ${\mathfrak{p}}$ of $K$ with $v_{\mathfrak{p}}(g(f^n(0))) = 1$, $v_{{\mathfrak{p}}}(g(f^{i}(0))) = 0$ for $1 \leq i \leq n-1$, and $v_{{\mathfrak{p}}}(d) = 0$.  Hence $H_n$ is maximal for all but finitely many $n$, and it follows that $G_\infty$ has finite index in ${\rm Aut}(T)$. 
\end{remark}

\begin{theorem}\label{inf many primitives}
Let $d \geq 2$ be an integer, $K$ be a global field of characteristic not dividing $d$ and containing a $d$th root of unity, $f(z) = z^d + c \in K[z]$, and $g(z) \in K[z]$ divide an iterate of $f$.  Suppose that $g \circ f^{n}$ is irreducible for all $n \geq 1$ and that $O_f(0)$ is infinite. Then there are infinitely many $n$ such that $H_n$ is maximal. 
\end{theorem}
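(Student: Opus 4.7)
The plan is to argue by contradiction via Lemma \ref{maxone}. Assume $H_n$ fails to be maximal for all $n$ outside some finite set. Then for each such $n$ there is some prime $p_n \mid d$ with $g(f^n(0)) \in K_{n-1}^{*p_n}$. Because $d$ has only finitely many prime divisors, a pigeonhole argument fixes a single prime $p \mid d$ such that $g(f^n(0))$ is a $p$-th power in $K_{n-1}$ for infinitely many $n$. The goal is to produce, for infinitely many such $n$, a prime of $K_{n-1}$ whose valuation on $g(f^n(0))$ is coprime to $p$, directly contradicting this.

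The core Diophantine step is to show that for infinitely many $n$, $g(f^n(0))$ admits a prime divisor ${\mathfrak{p}}$ of ${\mathcal{O}}_K$ with $v_{\mathfrak{p}}(g(f^n(0)))$ coprime to $p$, with $v_{\mathfrak{p}}(g(f^i(0))) = 0$ for $1 \leq i < n$, and with $v_{\mathfrak{p}}(d) = 0$. To produce this, I would choose $j$ large enough that the superelliptic curve $C : y^p = g(f^j(x))$ has genus at least one; this is possible since $g \circ f^j$ is irreducible and hence separable of degree $d^j \deg g$, which grows without bound. Fix a finite set $S$ of primes of $K$ containing the archimedean places, primes dividing $d$, and primes where $c$ or the coefficients of $g$ have denominators. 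If every prime of ${\mathcal{O}}_K$ outside $S$ divided $g(f^n(0))$ to an order divisible by $p$, then $g(f^n(0)) = u y^p$ for some $u$ in the finite group ${\mathcal{O}}_{K,S}^{*}/({\mathcal{O}}_{K,S}^{*})^p$ (finite by Dirichlet's $S$-unit theorem) and some $y \in {\mathcal{O}}_{K,S}$, producing an $S$-integral point $(f^{n-j}(0), y)$ on one of the finitely many twists $uY^p = g(f^j(X))$. Siegel's theorem on integral points on curves of positive genus bounds the number of such points; since the $f^{n-j}(0)$ are distinct because $O_f(0)$ is infinite, only finitely many $n$ can arise this way. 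Combining with the rigid divisibility of $\{f^n(0)\}$ from Lemma \ref{rds1}, which controls the multiplicative structure of the orbit values, I expect to be able to arrange that the prime extracted is primitive in the sense above.

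Once such a primitive ${\mathfrak{p}}$ is produced, I must show it is unramified in $K_{n-1}/K$, so that for any prime ${\mathfrak{P}}$ of $K_{n-1}$ above ${\mathfrak{p}}$ the equality $v_{\mathfrak{P}}(g(f^n(0))) = v_{\mathfrak{p}}(g(f^n(0)))$ holds and the right-hand side is coprime to $p$, forbidding $g(f^n(0))$ from being a $p$-th power in $K_{n-1}$. For the ramification, the tower $K = K_0 \subseteq K_1 \subseteq \cdots \subseteq K_{n-1}$ is iteratively of Kummer type, and by Lemma \ref{Kummer ramification} combined with the norm computation in the proof of Theorem \ref{firststab}, a prime of $K$ can ramify in this tower only if it divides $d$ or $g(f^i(0))$ for some $i < n$. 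Since ${\mathfrak{p}}$ is primitive and coprime to $d$, it is unramified throughout the tower, yielding the desired contradiction. The main obstacle is the Diophantine step: precisely extracting primitive prime divisors of $g(f^n(0))$ with valuation coprime to $p$ from Siegel's theorem and rigid divisibility. This is weaker than the ABC-conjectural result of Gratton mentioned in the remark preceding the theorem (which would give the conclusion for all but finitely many $n$), but it suffices for the infinite-density conclusion needed here.
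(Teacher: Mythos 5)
Your overall strategy (Siegel's theorem applied to integral points on twists of a superelliptic curve $y^p = g(f^j(x))$, combined with Dirichlet's $S$-unit theorem, rigid divisibility, Kummer-theoretic ramification, and Lemma~\ref{maxone}) is the same as the paper's, and the skeleton of the argument is sound. However, there are two genuine gaps.

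First, the statement is for global fields, but your Diophantine step relies only on Siegel's theorem, which applies over number fields. Over a global function field $K$, a curve $C$ of positive genus defined over the constant field can have infinitely many $S$-integral (even $K$-rational) points, since $\sigma^n(P)$ lies on $C$ for every $n$ when $\sigma$ is the $q$-power Frobenius. The paper handles this by invoking a theorem of Samuel (building on Manin and Grauert): if $C(K)$ is infinite, then after a finite extension $C$ is birational to a curve $C'$ over $\mathbb{F}_q$, and all but finitely many points of $C'(K)$ are Frobenius twists of a finite set. This produces two points $P_1, P_2 = \sigma^s(P_1)$ on the original curve whose $z$-coordinates are distinct elements $f^{m_1-j}(0), f^{m_2-j}(0)$ of the critical orbit, and a short rigid-divisibility argument (or the observation that $O_f(0)$ infinite forces $c$ out of the constant field) then derives a contradiction. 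This case has to be treated separately, and it is not trivial.

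Second, the primitivity of the extracted prime $\mathfrak{p}$ (i.e. that $v_\mathfrak{p}(g(f^i(0))) = 0$ for all $1 \leq i < n$) is essential for the unramification step, but you only say you ``expect to be able to arrange'' it. Rigid divisibility alone does not give it for arbitrary $n$. The missing idea is to restrict attention to indices of the form $n = \ell - j'$ where $\ell$ is a rational prime and $g \mid f^{j'}$. With this restriction, if $v_\mathfrak{p}(g(f^n(0))) > 0$ and $v_\mathfrak{p}(c) = 0$, then $v_\mathfrak{p}(f^{\ell}(0)) > 0$, and the minimal $m$ with $v_\mathfrak{p}(f^m(0)) > 0$ satisfies $m \mid \ell$ with $m > 1$; since $\ell$ is prime this forces $m = \ell$, so $\mathfrak{p}$ cannot divide $f^{i+j'}(0)$ (hence cannot divide $g(f^i(0))$) for any $1 \leq i < n$. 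Without restricting to $n+j'$ prime, a prime $\mathfrak{p}$ produced by the Siegel argument could very well divide earlier orbit values, and then it may already ramify in the tower $K \subset K_1 \subset \cdots \subset K_{n-1}$, defeating your Kummer unramification argument. Since the set $\{\ell - j' : \ell \text{ prime}\}$ is infinite, it still intersects the cofinite set in your contradiction hypothesis in an infinite set, so this restriction is compatible with your setup --- it just needs to be stated and used.

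Your Kummer-theoretic unramification argument (via Lemma~\ref{Kummer ramification} and the norm identity from the proof of Theorem~\ref{firststab}) is a reasonable alternative to the paper's citation of \cite[Lemma 2.6]{quaddiv} concerning the discriminant of $f^{\ell-1}$, and the pigeonhole reduction to a single $p \mid d$ is a minor but legitimate simplification of the paper's use of all prime divisors $q_1, \ldots, q_t$ of $d$ simultaneously.
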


\begin{proof} 
Put $b_n = g(f^n(0))$ for $n \geq 1$, and let $j$ be such that $g(z) \mid f^j (z)$. Observe first that for any $n$, the coefficients of $f^{n}(z)$ are in ${\mathbb{Z}}[c]$. Hence if $v_{\mathfrak{p}}(c) \geq 0$ for some non-archimedean place of $K$, then the coefficients of $f^n(z)$ have nonnegative ${\mathfrak{p}}$-adic valuation, and thus the same holds for all its roots. Therefore $0 \leq v_{\mathfrak{p}}(b_n) \leq v_{\mathfrak{p}}(f^{n+j}(0))$.  

Let $\ell$ be a rational prime, and note that if ${\mathfrak{p}}$ is a non-archimedean place of $K$ with $v_{\mathfrak{p}}(b_{\ell - j}) > 0$ and $v_{\mathfrak{p}}(c) = 0$, then $v_{\mathfrak{p}}(b_i) = 0$ for $i = 1, \ldots, \ell-j-1$. Indeed, by the previous paragraph we have $v_{\mathfrak{p}}(f^{\ell}(0)) > 0$, and by Lemma \ref{rds1}, condition (2) of Definition \ref{RDS}, and the fact that $v_{\mathfrak{p}}(c) = 0$, this implies $v_{\mathfrak{p}}(f^{n}(0)) = 0$ for $n = 1, \ldots, \ell-1$. Since $0 \leq v_{\mathfrak{p}}(b_i) \leq v_{\mathfrak{p}}(f^{i + j}(0))$, we obtain the desired conclusion. 

We wish to work in a principal ideal domain. We create a set $S$ by selecting a finite set of places of $K$, containing all archimedean places, and adding to it the finitely many places at which $c$ has non-zero valuation. Then  the set ${\mathcal{O}}_{K,S_0}$ of $S$-integers is a principal ideal domain, and $b_n \in {\mathcal{O}}_{K,S}$ for each $n \geq 1$.

Now fix $r \in {\mathbb{Z}}$, $r > 1$, and denote by $U_{K,S}$ the set of $S$-units in $K$. Suppose that for infinitely many primes $\ell$, we have 
\begin{equation*}
	b_{\ell - j}  = uy^r,
\end{equation*}
for some $u \in U_{K,S}$ and $y \in {\mathcal{O}}_{K,S}$.  By absorbing $r$th powers into $y^r$, we may assume that $u$ belongs to a set of coset representatives of $U_{K,S}^r$. By Dirichlet's theorem on $S$-units \cite[p. 174]{FT} this set of representatives is finite. Since $O_f(0)$ is infinite, the sequence $\{f^n(0) : n \geq 1\}$ cannot have repeated values. The pigeonhole principle then dictates that there is some $u$ such that the curve
\begin{equation} \label{thecurve}
	C : g(f^3(z)) = uy^r
\end{equation}
has infinitely many points in ${\mathcal{O}}_{K,S}$ (with $z = f^{\ell - j - 3}(0)$). Assume for a moment that this gives a contradiction. Then for all but finitely many $\ell$, writing
\begin{equation*}
	b_{\ell - j}  = u'\pi_1^{e_1} \cdots \pi_k^{e_k},
\end{equation*}
with the $\pi_i$ irreducible in ${\mathcal{O}}_{K,S}$ and $u' \in U_{K,S}$, we must have $r \nmid e_i$ for some $i$. Denote by $v_{\mathfrak{p}}$ the place of $K$ corresponding to the prime ideal $\pi_i {\mathcal{O}}_{K,S}$, and note that $v_{\mathfrak{p}}(c) = 0$ by our choice of $S$ and $v_{\mathfrak{p}}(b_{\ell - j}) = e_i$. 

Applying this argument with $r$ varying over the distinct prime divisors $q_1, \ldots, q_t$ of $d$, we obtain that for all but finitely many $\ell$, there exist places ${\mathfrak{p}}_1, \ldots {\mathfrak{p}}_t$ such that $v_{{\mathfrak{p}}_i}(c) = 0$ and $v_{{\mathfrak{p}}_i}(b_{\ell - j})$ is not a multiple of $q_i$. From \cite[Lemma 2.6]{quaddiv}, the fact that $v_{{\mathfrak{p}}_i}(f^n(0)) = 0$ for $n = 1, \ldots, \ell - 1$ implies that ${\mathfrak{p}}_i$ does not divide the discriminant of $f^{\ell - 1}(z)$, and hence ${\mathfrak{p}}_i$ is unramified in $K(f^{\ell - 1})$ and thus also in $K(g \circ f^{\ell- j - 1})$.  So if $\mathfrak{P}_i$ is any prime of $K(g \circ f^{\ell- j - 1})$ lying above ${\mathfrak{p}}_i$, then $v_{\mathfrak{P}_i}(b_{\ell - j})$ is not a multiple of $q_i$, proving that $b_{\ell - j}$ is not a $q_i$th power in $K(g \circ f^{\ell- j - 1})$. Lemma \ref{maxone} then finishes the proof. 

Let us return now to the matter of the curve in \eqref{thecurve}. Because the characteristic of $K$ does not divide $d$, $g \circ f^{3}$ is separable of degree $\geq d^3$, and one easily verifies that the curve in \eqref{thecurve} has (absolute) genus at least two. When $K$ is a number field, this contradicts Siegel's theorem on $S$-integral points \cite[Theorem D.9.1]{jhsdioph}. Indeed, in this case we could take $g(f^2(z)) = uy^r$ in \eqref{thecurve}, as this ensures positive genus even in the case $d = 2$ and $\deg g = 1$. 

When $K$ is a global function field (with field of constants ${\mathbb{F}_q}$), there is no statement as clean as that of Siegel's theorem, and indeed there cannot be, for if $C$ is defined over ${\mathbb{F}_q}$ and $P = (y, z) \in C({\mathcal{O}}_{K,S}) \setminus C({\mathbb{F}_q})$, then $\{\sigma^n(P) : n \geq 1\}$ furnishes an infinite set of points in $C({\mathcal{O}}_{K,S})$, where $\sigma$ is the $q$th power Frobenius map, acting on the coordinates of $P$. Fortunately every infinite set of points in $C({\mathcal{O}}_{K,S})$ (indeed in $C(K)$) arises in this manner. A theorem of Samuel \cite[p. iv]{samuel}, building on work of Manin and Grauert, gives the following: if $C(K)$ is infinite, then after possibly replacing $K$ by a finite extension, $C$ is birationally equivalent over $K$ to a curve $C'$ defined over ${\mathbb{F}_q}$. Moreover, there is a finite collection of points $\Delta \subset C'(K) \setminus C'({\mathbb{F}_q})$ such that every point of $C'(K) \setminus C'({\mathbb{F}_q})$ is of the form $\sigma^n(P)$ for some $P \in \Delta$. Hence any infinite collection of points in $C'(K)$ must contain two points $P_1, P_2$ with  $P_2 = \sigma^{s}(P_{1})$ for some $s \geq 1$.  Because the Frobenius map $\sigma$ commutes with any rational map, there must be two similar points in $C(K)$, which we denote again by $P_1, P_2$. 

By construction, the points generated above on \eqref{thecurve} have $z = f^{\ell - j - 3}(0)$ for $j \geq 4$, and this holds in particular for the $z$-coordinate $z(P_i)$ of $P_i$. If there is an absolute value on $K$ with $|z(P_1)| < 1$, then it follows from Lemma \ref{rds1} that we must have $|z(P_2)| = |z(P_1)|$, which contradicts $P_2 = \sigma^{s}(P_1)$. If there is no such absolute value, then $z(P_1)$ is in the field of constants of $K$, and the same must hold for $c$ (otherwise there is an absolute value with $|f^n(0)| < 1$ for each $n \geq 1$). Thus the entire orbit of $0$ is contained in a finite field, and hence $O_f(0)$ is finite, contrary to our hypothesis.
\end{proof}

\begin{lemma}\label{count $X_n=t$}
Suppose that $H_n$ is maximal. Let $t\in{\mathbb{N}}$. Then $$\mu (Y_n=t\mid Y_{n-1}=t,Y_{n-2}=t,\dots,Y_{n-k}=t)\leq\frac{1}{2}.$$
\end{lemma}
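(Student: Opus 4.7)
The plan is to use the maximality of $H_n$ to identify the conditional distribution of $Y_n$ as (a rescaling of) a binomial random variable, and then to bound its value at the mean by $1/2$. I will first exploit maximality: since $g \circ f^{n-1}$ has $m := \deg(g \circ f^{n-1})$ roots, it gives $H_n \cong (\Z/d\Z)^m$. Because $\zeta_d \in K \subseteq K_{n-1}$, for each root $\alpha$ of $g \circ f^{n-1}$ the fiber $F_\alpha := f^{-1}(\alpha) \subset T_n$ equals $\{\zeta_d^j \beta_\alpha : 0 \le j \le d-1\}$ for any chosen $\beta_\alpha \in F_\alpha$. Indexing the factors of $H_n$ by the roots $\alpha$, I will check, using the description of $K_n$ as obtained from $K_{n-1}$ by adjoining $d$th roots of the $\beta_\alpha - c$, that $(h_\alpha)_\alpha \in H_n$ acts on $F_\alpha$ by multiplication by $\zeta_d^{h_\alpha}$.

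Next, for $\sigma \in G_{n-1}$ and any lift $\tau \in G_n$, fibers with $\sigma(\alpha) \ne \alpha$ contribute no fixed points to $Y_n(\tau)$, while on a $\sigma$-fixed fiber $F_\alpha$, $\tau$ acts as multiplication by $\zeta_d^{r_\alpha(\tau)}$ for some $r_\alpha(\tau) \in \Z/d\Z$, contributing $d$ fixed points when $r_\alpha(\tau) = 0$ and none otherwise. Thus $Y_n(\tau) = d \cdot N$, where $N := \#\{\alpha : \sigma(\alpha) = \alpha, \, r_\alpha(\tau) = 0\}$. Modifying $\tau$ by $h \in H_n$ shifts $r_\alpha$ to $r_\alpha + h_\alpha$, so maximality of $H_n$ forces the tuple $(r_\alpha)$ to be uniform on $(\Z/d\Z)^{Y_{n-1}(\sigma)}$ as $\tau$ ranges over lifts of $\sigma$. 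Conditionally on $\sigma$, then, $N \sim \mathrm{Bin}(Y_{n-1}(\sigma), 1/d)$ and $Y_n = dN$. Since this distribution depends on $\sigma$ only through $Y_{n-1}(\sigma)$, the conditioning on $Y_{n-2} = \cdots = Y_{n-k} = t$ is immaterial, and I will obtain
\[
\mu(Y_n = t \mid Y_{n-1} = t, \ldots, Y_{n-k} = t) = \begin{cases} \binom{t}{t/d}(1/d)^{t/d}((d-1)/d)^{t - t/d} & \text{if } d \mid t, \\ 0 & \text{otherwise.}\end{cases}
\]

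To conclude, I will bound this probability by $1/2$. The case $d \nmid t$ is trivial; when $d \mid t$, I write $t = kd$ with $k \geq 1$ and set $a_k := \binom{kd}{k}(d-1)^{k(d-1)}/d^{kd}$. A short manipulation of the binomial coefficient ratio will yield
\[
\frac{a_{k+1}}{a_k} = \prod_{i=1}^{d-1} \frac{(d-1)(kd+i)}{d(k(d-1)+i)},
\]
and each factor is less than $1$ since $i(d-1) < id$; hence $a_k$ is strictly decreasing in $k$. Finally, $a_1 = ((d-1)/d)^{d-1}$, and the function $\phi(x) := ((x-1)/x)^{x-1}$ has non-positive logarithmic derivative $\log(1-1/x) + 1/x$ on $[2, \infty)$ (using $\log(1-y) \leq -y$), so $\phi$ is non-increasing with $\phi(2) = 1/2$. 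Thus $a_k \leq a_1 \leq 1/2$. The main obstacle is the structural description of how $H_n$ acts on the fibers of $f$ and the consequent independence of the twists $r_\alpha$; the final binomial-mass bound is elementary.
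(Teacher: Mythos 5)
Your proposal is correct and follows the same essential path as the paper's proof: maximality of $H_n$ gives that, conditional on any $\sigma \in G_{n-1}$ fixing $dt$ roots, the number of lifts $\tau \in G_n$ fixing $dt$ roots of $g\circ f^n$ is $\binom{dt}{t}(d-1)^{dt-t}d^{d_{n-1}-dt}$ out of $d^{d_{n-1}}$ total lifts, yielding the conditional probability $\binom{dt}{t}\left(\frac{d-1}{d}\right)^{dt}(d-1)^{-t}$ independently of the earlier conditioning. Your elaboration of the fiberwise $\zeta_d$-twist structure of $H_n \cong (\Z/d\Z)^m$ and the consequent binomial law is a welcome unpacking of the paper's terser counting statement. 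The only genuine difference is in the final numerical bound: the paper argues somewhat informally that $\left(\frac{d-1}{d}\right)^{dt}\prod_{r=0}^{t-1}\frac{dt-r}{(t-r)(d-1)}$ decreases as $d$ and $t$ increase (a statement that needs a small amount of care since the number of factors in the product depends on $t$), while you give a cleaner two-step argument: the consecutive ratio $a_{k+1}/a_k = \prod_{i=1}^{d-1}\frac{(d-1)(kd+i)}{d(k(d-1)+i)} < 1$ shows monotone decrease in $k$, and then the log-derivative computation shows $a_1 = ((d-1)/d)^{d-1}$ is non-increasing in $d$ with value $1/2$ at $d=2$. Both routes are elementary and land in the same place; yours is a touch more rigorous at the last step.
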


\begin{proof}
Let $d_{n-1}=\text{deg}(g \circ f^{n-1})$. Suppose $\mu(Y_{n-1}=t,\dots ,Y_{n-k}=t)=s/\# G_{n-1}$. For $n\geq 1$, $t$ is either a multiple of $d$, or $\mu\{Y_n=t\}=0$, so we may replace $t$ with $dt$ to ease notation in the calculations below. As always, we assume $d\geq 2$.

Because $H_n$ is maximal, there are 
\[\binom{dt}{t}(d-1)^{dt-t}d^{d_{n-1}-dt}\] 
automorphisms of $G_n$ that restrict to any particular automorphism of $G_{n-1}$ fixing $dt$ roots.

The conditional probability $\mu (Y_n=dt\mid Y_{n-1}=dt,\dots,Y_{n-k}=dt)$
\begin{eqnarray*}
	&=&\left(\frac{s\binom{dt}{t} (d-1)^{dt-t}d^{d_{n-1}-dt}}{\# G_{n}}\right)\left(\frac{\# G_{n-1}}{s}\right)\\
	&=&\binom{dt}{t}\left(\frac{d-1}{d}\right)^{dt}(d-1)^{-t}\\
	&=&\left(\frac{d-1}{d}\right)^{dt}\prod_{r=0}^{t-1}\frac{dt-r}{(t-r)(d-1)}
\end{eqnarray*}

For fixed $r< t$, let $R(d,t)=\frac{dt-r}{(t-r)(d-1)}$. Both $\frac{\partial(R)}{\partial d}$ and $\frac{\partial(R)}{\partial t}$ are negative, and $\left(\frac{d-1}{d}\right)^{dt}$ is also decreasing as $d,t$ increase. Thus $\binom{dt}{t}\left(\frac{d-1}{d}\right)^{dt}(d-1)^{-t}$ takes its maximum of 1/2 at the minimum values for $d,t$, that is $d=2$ and $t=1$. \end{proof}

\begin{lemma}\label{$H_n$ max, fixers to zero}
If $GP(f,g)$ is  an eventual martingale and $H_n$ is maximal for infinitely many $n$, then \[\lim_{n\to\infty}\mu(Y_n>0)=0.\]
\end{lemma}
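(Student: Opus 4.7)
The plan is to combine Doob's martingale convergence theorem with the contraction estimate of Lemma \ref{count $X_n=t$} and the fact that each $Y_n$ takes values in the nonnegative integers.

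First, since $GP(f,g)$ is an eventual martingale, choose $N_0$ so that $(Y_n)_{n \geq N_0}$ is a genuine martingale. Because each $Y_n \geq 0$ and a martingale has constant expectation, the process is bounded in $L^1$ by $E(Y_{N_0})$. Doob's martingale convergence theorem then supplies an integrable random variable $Y_\infty$ with $Y_n \to Y_\infty$ $\mu$-almost surely. Since the $Y_n$ are integer-valued, almost-sure convergence forces the sequence $\{Y_n(\sigma)\}_n$ to be eventually constant equal to $Y_\infty(\sigma)$ for $\mu$-almost every $\sigma \in G_\infty$.

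The crucial step is to rule out any positive integer value as the limit. Fix $t \geq 1$ and $N \geq N_0$. For every $k \geq 1$,
\[\mu(Y_N = t, Y_{N+1} = t, \ldots, Y_{N+k} = t) = \mu(Y_N = t)\prod_{i=1}^{k}\mu\bigl(Y_{N+i} = t \,\bigm|\, Y_{N+i-1} = t, \ldots, Y_N = t\bigr).\]
By Lemma \ref{count $X_n=t$}, every index $N+i$ at which $H_{N+i}$ is maximal contributes a factor of at most $\tfrac12$ on the right. Since $H_n$ is maximal for infinitely many $n$, as $k \to \infty$ the right-hand side acquires an unbounded number of such factors and tends to zero. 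Hence $\mu(Y_n = t \text{ for all } n \geq N) = 0$, and a countable union over $N \geq N_0$ gives $\mu(Y_\infty = t) = 0$ for each $t \geq 1$. Together with $Y_\infty \geq 0$, this yields $Y_\infty = 0$ $\mu$-almost surely.

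To finish, I pass from almost-sure convergence to convergence of the probability $\mu(Y_n > 0)$. Because the $Y_n$ are nonnegative integers, $\{Y_n > 0\} = \{Y_n \geq 1\}$, so the indicator $\mathbf{1}_{\{Y_n > 0\}}$ is bounded by $1$ and tends to $0$ almost surely. Dominated convergence then gives $\mu(Y_n > 0) = E(\mathbf{1}_{\{Y_n > 0\}}) \to 0$, as required. The only subtle point is ensuring the $\tfrac12$ contraction can be harvested arbitrarily many times along the same conditional chain; this is precisely what the hypothesis on maximality of $H_n$ infinitely often guarantees, so no further obstacle arises.
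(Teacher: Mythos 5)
Your proof is correct and follows essentially the same route as the paper's: invoke Doob's convergence theorem for the eventual martingale, rule out every positive integer limit value by chaining the conditional bound of Lemma \ref{count $X_n=t$} along the infinitely many maximal indices, and conclude. Your write-up is in fact slightly more careful than the paper's, which cites convergence ``in probability'' but then implicitly uses almost-sure convergence; you correctly use the a.s.\ version, spell out the $L^1$-boundedness via nonnegativity and constant expectation, and make the final dominated-convergence step explicit.
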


\begin{proof}
As $GP(f,g)$ is an eventual martingale, it converges in probability by Doob's theorem. (See, e.g. \cite{Stoch}.)  Let $Y=\lim_{n\to\infty}Y_n$. Let $t\in{\mathbb{N}}$ and suppose that $\mu\{Y=t\}>0$.  There exists $m\in{\mathbb{N}}$ and $r\in{\mathbb{Q}}_{>0}$ such that 
\[\mu\left(\cap_{i\geq m}\{Y_i=t\}\right)=r>0,\]
because the $Y_n$ are integer-valued.  We fix $t\in{\mathbb{N}}$. Let $\mathcal{C}_i=\{Y_i=t\}$.
\[r\leq\mu\left(\cap_{i\geq m}\mathcal{C}_i\right)\leq \mu\left(\cap_{i=m}^k\mathcal{C}_i\right)\]
for any integer $k>m$.
\begin{equation*}\label{conditional}
	\mu(\cap_{i=m}^{k}\mathcal{C}_i)=\mu(\mathcal{C}_k\mid \cap_{i=m}^{k-1} \mathcal{C}_i)\cdot\mu(\mathcal{C}_{k-1}\mid \cap_{i=m}^{k-2} \mathcal{C}_i)\dots\mu(\mathcal{C}_m)
\end{equation*}
Suppose that $H_n$ is maximal for $s$ values of $n$ between $m$ and $k$. Then $r<\mu(\cap_{i=m}^{k}\mathcal{C}_i)\leq\frac{1}{2}^s$, from Lemma \ref{count $X_n=t$}. We let $k$ go to infinity, and, since $H_n$ is maximal for infinitely many $n$, $s$ goes to infinity as well. Then  
\[r<\lim_{s\to\infty}\left(\frac{1}{2}\right)^s.\]
This conclusion is false, therefore $\mu\{Y=t\}=0$ for all $t>0$. \end{proof}

We are at last in position to prove our main result. 

\begin{proof}[Proof of Theorem \ref{main1}]
In both case (1) and case (2), we find that $f$ is eventually stable. That is, there is $j \in {\mathbb{Z}}_{\geq 1}$ such that $f^j(z)=\prod_{i=1}^t g_i(z)$, with $g_i(f^n(z))$ irreducible for all $n \geq 0$. In case (1) this follows from Theorem \ref{evstab}, while in case (2) it follows from Theorem \ref{firststab}. Recall that $P_{f,g_i}(a_0)$ is the set of prime ideals ${\mathfrak{q}}$ of ${\mathcal{O}}_K$ such that ${\mathfrak{q}} \mid g_i(f^n(a_0))$ for at least one $n \geq 1$. Clearly ${\mathfrak{q}}\in P_{f,g_i}(a_0)$ for some $1\leq i\leq t$ if and only if  ${\mathfrak{q}}\in P_{f}(a_0)$.  We observe now that the Galois process associated to $(f, g_i)$ is an eventual martingale; in case (1) this is a consequence of Theorem \ref{mart1}, while in case (2) it follows from Theorem \ref{mart2}. Because $g_i(f^n(z))$ is irreducible for all $n \geq 0$ and $O_f(0)$ is infinite by hypothesis, we may apply Theorem \ref{inf many primitives} to conclude that $H_n$ is maximal for infinitely many $n$. From Lemma \ref{$H_n$ max, fixers to zero} and Theorem \ref{densecon}, we then have that $D(P_{f,g_i}(a_0))=0$. Therefore $P_{f}(a_0)$ is a finite union of zero-density sets, proving the theorem.  
\end{proof}

\section{The Case of $z^p + c \in {\mathbb{Z}}[z]$}  \label{zcasesec}

In this section we prove Corollary \ref{maincor2} by studying the family $f(z) = z^p + c$, $c \in {\mathbb{Z}} \setminus \{0\}$ over the field ${\mathbb{Q}}(\zeta_p)$. In particular, we may apply part (2) of Theorem \ref{main1} to members of this family (excepting the case $p = 2$ and $c = -1$), with $j = 1$ when $c$ is a $p$th power in ${\mathbb{Z}}$, and $j = 0$ otherwise. This follows from Lemma \ref{zcase} and the remark immediately after it.

\begin{lemma} \label{zcase}
Let $f(z) = z^p + c$, $c \in {\mathbb{Z}} \setminus \{0\}$, and let $p$ be an odd prime. If $c$ is not a $p$th power in ${\mathbb{Z}}$, then $f^n(0)$ is not a $p$th power in ${\mathbb{Z}}$ (and hence in ${\mathbb{Q}}(\zeta_p)$) for all $n \geq 1$. If $c = r^p$ for some $r \in {\mathbb{Z}}$, then no element of the form
\begin{equation} \label{factor}
	(f^{n-1}(0) + r \zeta_p^i),  \qquad i \geq 0
\end{equation}
is a $p$th power in ${\mathbb{Q}}(\zeta_p)$, for any $n \geq 2$. 
\end{lemma}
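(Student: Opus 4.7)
The plan is to use the rigid divisibility of $\{f^n(0)\}_{n \geq 1}$ (Lemma \ref{rds1}) to locate, in each case, a prime at which the element in question has a valuation not divisible by $p$; this precludes it from being a $p$th power. For Part 1, since $p$ is odd, $c$ failing to be a $p$th power in $\mathbb{Z}$ yields some rational prime $q$ with $p \nmid v_q(c)$. Because $v_q(f(0)) = v_q(c) > 0$, Lemma \ref{rds1} gives $v_q(f^n(0)) = v_q(c)$ for all $n \geq 1$, so $f^n(0)$ is not a $p$th power in $\mathbb{Z}$. The passage to $\mathbb{Q}(\zeta_p)$ uses integrality: any $\alpha \in \mathbb{Q}(\zeta_p)$ with $\alpha^p = f^n(0) \in \mathbb{Z}$ lies in $\mathbb{Z}[\zeta_p]$, and $N_{\mathbb{Q}(\zeta_p)/\mathbb{Q}}(\alpha)^p = f^n(0)^{p-1}$ together with $\gcd(p-1,p) = 1$ forces $f^n(0)$ to be a $p$th power in $\mathbb{Z}$.

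For Part 2, suppose $\alpha^p = f^{n-1}(0) + r\zeta_p^i$ with $\alpha \in \mathbb{Z}[\zeta_p]$. For any rational prime $q \mid r$ and a prime $\mathfrak{q}$ of $\mathbb{Z}[\zeta_p]$ above $q$, Lemma \ref{rds1} applied over $\mathbb{Q}(\zeta_p)$ yields $v_\mathfrak{q}(f^{n-1}(0)) = v_\mathfrak{q}(c) = p \, v_\mathfrak{q}(r) > v_\mathfrak{q}(r\zeta_p^i) = v_\mathfrak{q}(r)$, so the ultrametric inequality gives $v_\mathfrak{q}(f^{n-1}(0) + r\zeta_p^i) = v_\mathfrak{q}(r)$: this equals $v_q(r)$ for $q \neq p$ (unramified) and $(p-1) v_p(r)$ at the totally ramified $\mathfrak{p} \mid p$. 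Requiring this to be a multiple of $p$ forces $p \mid v_q(r)$ for every prime $q \mid r$. If any prime fails this, we are done; the remaining case is $r = s^p$ for some $s \in \mathbb{Z}$ (oddness of $p$ absorbs signs).

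The main obstacle is this subcase $r = s^p$, where the bare valuation argument is vacuous. The plan is to factor out $s^p$: using $s^{p^2} \mid f^{n-1}(0)$ (from RDS), write $f^{n-1}(0) + s^p\zeta_p^i = s^p\bigl(s^{p(p-1)}m + \zeta_p^i\bigr)$ with $m := f^{n-1}(0)/c \in \mathbb{Z}$; since $s \mid \alpha$ in $\mathbb{Z}[\zeta_p]$, the quotient $\beta := \alpha/s$ satisfies $\beta^p = s^{p(p-1)}m + \zeta_p^i$. For $i \neq 0$ one exploits that $\zeta_p^i$ is not a $p$th power in any completion of $\mathbb{Q}(\zeta_p)$: at a prime $\mathfrak{q}$ above $q \mid s$, $q \neq p$, Hensel's lemma reduces to the residue field, where $\overline{\zeta_p}^i$ has order $p$ while $(\mathbb{F}_\mathfrak{q}^*)^p$ generically has no order-$p$ elements. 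The delicate boundary case $s = \pm 1$ (so $c = \pm 1$) requires a direct cyclotomic unit computation showing $1 + \zeta_p^i$ is non-trivial in $\mathcal{O}_K^*/(\mathcal{O}_K^*)^p$. For $i = 0$ the equation becomes integral, and reduces (starting from the $n = 2$ identity $\beta^p - (s^{p-1})^p = 1$, which forces $(\beta - s^{p-1})\bigl(\beta^{p-1} + \cdots + (s^{p-1})^{p-1}\bigr) = 1$ and is impossible for $s \neq 0$) to a Fermat-style analysis ruling out nontrivial solutions.
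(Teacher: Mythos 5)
Your approach to Part 1 is correct and cleanly different from the paper's. You locate a rational prime $q$ with $p \nmid v_q(c)$, use Lemma \ref{rds1} to push this valuation to $v_q(f^n(0))$, and pass to $\mathbb{Q}(\zeta_p)$ via the norm equation $N(\alpha)^p = f^n(0)^{p-1}$. The paper instead argues Archimedeanly: after reducing to $c>0$, it shows $f^n(0)=(f^{n-1}(0))^p+c$ is strictly between $(f^{n-1}(0))^p$ and $(f^{n-1}(0)+1)^p$. Both work; your version is more arithmetic in flavor and arguably more transparent. Your prime-factorization observation for passing from $\mathbb{Z}$ to $\mathbb{Q}(\zeta_p)$ also matches the paper's.

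Part 2, however, has genuine gaps that you flag but do not close. After reducing to $r = s^p$ and factoring out $s^p$ to get $\beta^p = s^{p(p-1)}m + \zeta_p^i$, your plan for $i \neq 0$ is to reduce mod a prime $\mathfrak{q}$ above some $q \mid s$, $q \neq p$, and observe that $\zeta_p^i$ is not a $p$th power in the residue field. But this requires $p^2 \nmid N(\mathfrak{q}) - 1$: if $q \equiv 1 \pmod{p^2}$ (take $p=3$, $s=19$, so $q=19$, $N(\mathfrak{q})=19 \equiv 1 \bmod 9$), then $\zeta_p^i$ \emph{is} a $p$th power in $\mathbb{F}_{\mathfrak{q}}^*$ and in $\mathcal{O}_\mathfrak{q}^*$, and no congruence obstruction appears. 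Your ``generically'' acknowledges this, but there is no guarantee any suitable $\mathfrak{q}$ exists, and the argument does not recover.

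The case $s=\pm 1$ ($c=\pm 1$) is a second, independent hole: there are then no primes dividing $s$ at all, so the whole valuation/Hensel mechanism is vacuous for \emph{every} $n$, not just $n=2$. The reduction in that case is to $\beta^p = m + \zeta_p^i$ with $m = f^{n-1}(0)/c$, and your proposed cyclotomic-unit computation only addresses $m=1$, i.e.\ $n=2$; for $n\geq 3$ the element is $2+\zeta_p^i$, $(2^p+1)+\zeta_p^i$, etc., which are not units and are not covered. Similarly, your $i=0$ treatment works out $n=2$ via the factorization $\beta^p - (s^{p-1})^p = 1$, but ``Fermat-style analysis'' for $n>2$ is not a proof; carried out honestly it reduces to precisely the Archimedean sandwiching the paper uses. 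In short, the paper handles all $n$ uniformly with explicit norm and size estimates, isolating exactly one exceptional case ($n=2$, $r=1$, $0 < i < p$) for Lemma \ref{heightlem}; your route, as sketched, handles only certain configurations of the prime factorization of $s$ and only $n=2$ in the remaining cases.
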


\begin{remark}
The case $p = 2$ is handled in \cite[Proposition 4.5]{quaddiv}, which gives that $f^n(0)$ is not a $p$th power provided that $c \neq -r^2$. Moreover, if $c = -r^2$ for $r \neq \pm 1$, then no element of the form $f^{n-1}(0) \pm r$ is a square in ${\mathbb{Q}}$. 
\end{remark}

Before proving Lemma \ref{zcase}, we give two corollaries.

\begin{corollary} \label{zfactors}
Let $f(z) = z^p + c$, $c \in {\mathbb{Z}} \setminus \{0\}$, and let $p$ be an odd prime. Over ${\mathbb{Q}}(\zeta_p)$, $f(z)$ is stable if $c$ is not of the form $r^p$, $r \in {\mathbb{Z}}$. Otherwise, $f(z)$ is the product of the $p$ linear polynomials $g_i(z) = z + r\zeta_p^i$, $i = 0, \ldots, p-1$, and $g_i(f^n(z))$ is irreducible for all $n \geq 1$. Over ${\mathbb{Q}}$, $f(z)$ is stable if $c$ is not of the form $r^p$, $r \in {\mathbb{Z}}$. Otherwise, $f(z) = (z-r)h(z)$ for some irreducible $h(z) \in {\mathbb{Z}}[z]$, and $f^n(z) - r$ and $h(f^n(z))$ are both irreducible for all $n \geq 1$.
\end{corollary}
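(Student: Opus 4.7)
The plan is to derive both the factorizations and all of the irreducibility claims from Lemma \ref{zcase} combined with Theorem \ref{firststab}, supplemented by a brief Galois-descent to transfer $\mathbb{Q}(\zeta_p)$-irreducibility down to $\mathbb{Q}$ in the non-stable case.

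Suppose first that $c$ is not a $p$th power in $\mathbb{Z}$. I would apply Theorem \ref{firststab} with $g(z)=z$ and $K\in\{\mathbb{Q},\mathbb{Q}(\zeta_p)\}$. Since $p$ is odd and $\deg g=1$, the parameter $\epsilon$ of Theorem \ref{firststab} vanishes and its $4\mid d$ clause is vacuous, so the sole hypothesis to verify is that $f^n(0)$ is not a $p$th power in $K$ for every $n\ge 1$; this is supplied verbatim by Lemma \ref{zcase} over $\mathbb{Q}(\zeta_p)$ (and hence over its subfield $\mathbb{Q}$). This settles stability of $f$ over both fields.

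Now suppose $c=r^p$ for some $r\in\mathbb{Z}$. Since $p$ is odd, the identity $z^p+r^p=\prod_{i=0}^{p-1}(z+r\zeta_p^i)$ immediately gives the claimed factorization of $f$ over $\mathbb{Q}(\zeta_p)$ with $g_i(z)=z+r\zeta_p^i$, each of which is monic and linear, hence trivially irreducible and separable. To obtain irreducibility of $g_i\circ f^n$ over $\mathbb{Q}(\zeta_p)$ I would invoke Theorem \ref{firststab} once more with $g=g_i$; the hypothesis becomes that $g_i(f^n(0))=f^n(0)+r\zeta_p^i$ is not a $p$th power in $\mathbb{Q}(\zeta_p)$ for each $n\ge 1$, which is exactly the second assertion of Lemma \ref{zcase} (applied with the lemma's index taken to be $n+1\ge 2$). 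Over $\mathbb{Q}$, grouping the $\mathrm{Gal}(\mathbb{Q}(\zeta_p)/\mathbb{Q})$-conjugate factors yields $f(z)=(z+r)h(z)$ with $h(z)=(z^p+r^p)/(z+r)\in\mathbb{Z}[z]$, and $h$ is $\mathbb{Q}$-irreducible because $\mathrm{Gal}(\mathbb{Q}(\zeta_p)/\mathbb{Q})\cong(\mathbb{Z}/p\mathbb{Z})^{\times}$ acts transitively on its roots $\{-r\zeta_p^i:1\le i\le p-1\}$.

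It remains to descend the irreducibility of $f^n(z)+r=g_0(f^n(z))$ and of $h(f^n(z))$ from $\mathbb{Q}(\zeta_p)$ to $\mathbb{Q}$; this step is the main obstacle, since Theorem \ref{firststab} alone only outputs $\mathbb{Q}(\zeta_p)$-statements for $i\ne 0$. For $f^n(z)+r$, if $\beta$ is a root then the previous paragraph gives $[\mathbb{Q}(\zeta_p,\beta):\mathbb{Q}(\zeta_p)]=p^n$, hence $[\mathbb{Q}(\zeta_p,\beta):\mathbb{Q}]=(p-1)p^n$; since $[\mathbb{Q}(\beta):\mathbb{Q}]$ divides $p^n$ and $[\mathbb{Q}(\zeta_p,\beta):\mathbb{Q}(\beta)]$ divides $p-1$, the coprimality $\gcd(p^n,p-1)=1$ forces $[\mathbb{Q}(\beta):\mathbb{Q}]=p^n$, yielding $\mathbb{Q}$-irreducibility. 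For $h(f^n(z))$, over $\mathbb{Q}(\zeta_p)$ it is the product of the $p-1$ pairwise distinct irreducible polynomials $g_i(f^n(z))$ (distinct because they have pairwise distinct constant terms $f^n(0)+r\zeta_p^i$), and the element $\sigma_a\in\mathrm{Gal}(\mathbb{Q}(\zeta_p)/\mathbb{Q})$ with $\sigma_a(\zeta_p)=\zeta_p^a$ sends $g_i(f^n(z))$ to $g_{ai\bmod p}(f^n(z))$, acting transitively on $\{1,\dots,p-1\}$. Any $\mathbb{Q}$-irreducible factor of $h(f^n(z))$ therefore pulls back, over $\mathbb{Q}(\zeta_p)$, to a nonempty Galois-invariant subproduct of the $g_i(f^n(z))$, and transitivity forces this subproduct to be the entire product, completing the proof.
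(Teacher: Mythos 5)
Your proof is correct and takes essentially the same route as the paper: apply Lemma \ref{zcase} together with Theorem \ref{firststab} to get the $\mathbb{Q}(\zeta_p)$-statements, then descend to $\mathbb{Q}$ by grouping the Galois-conjugate factors $g_1,\ldots,g_{p-1}$ into $h$. Two small remarks: you have (correctly) written $f(z)=(z+r)h(z)$ and $f^n(z)+r$ where the corollary as printed has $(z-r)$ and $f^n(z)-r$, which is a sign slip in the paper (with $c=r^p$ and $p$ odd the root is $-r$); and your degree-counting argument for the $\mathbb{Q}$-irreducibility of $f^n(z)+r$ is more elaborate than necessary, since a polynomial with $\mathbb{Q}$-coefficients that is irreducible over the extension $\mathbb{Q}(\zeta_p)$ is automatically irreducible over $\mathbb{Q}$.
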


\begin{proof}
The first assertion is an application of Lemma \ref{zcase} and Theorem \ref{firststab}. For the second assertion, note that if $c = r^p$, then the roots of the $g_i(z)$ are Galois conjugate for $i = 1, \ldots, p-1$, and the same is true for the roots of $g_i(f^n(z))$. Hence letting $h(z) = \prod_{i = 1}^{p-1} g_i(z)$ gives the desired result. 
\end{proof}

\begin{corollary} \label{orbitbounds}
Let $f(z) = z^p + c$, $c \in {\mathbb{Z}} \setminus \{0\}$, and let $p$ be an odd prime. If $K = {\mathbb{Q}}(\zeta_p)$, then the action of ${\rm Gal\,}(\overline{K}/K)$ on the roots of $f^n(z)$ has at most $p$ orbits, for any $n \geq 1$. If $K = {\mathbb{Q}}$, the corresponding Galois action on the roots of any iterate has at most two orbits.
\end{corollary}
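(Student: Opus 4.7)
The plan is to translate the orbit count into a count of irreducible factors and then invoke Corollary \ref{zfactors} directly. Since $K$ has characteristic zero, and since $c \in {\mathbb{Z}} \setminus \{0\}$ forces $|f^n(0)|$ to grow (so $0$ is not pre-periodic under $f$), one checks via the chain rule that $(f^n)'$ does not vanish at any root of $f^n$; hence every iterate $f^n(z)$ is separable over $K$. Consequently, the orbits of ${\rm Gal\,}(\overline{K}/K)$ acting on the roots of $f^n(z)$ are in bijection with the monic irreducible factors of $f^n(z)$ in $K[z]$, and the corollary reduces to bounding the number of such factors by $p$ when $K = {\mathbb{Q}}(\zeta_p)$ and by $2$ when $K = {\mathbb{Q}}$.

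Next, I would split into two cases according to whether $c$ is a $p$th power in ${\mathbb{Z}}$. If $c$ is not a $p$th power, Corollary \ref{zfactors} states that $f$ is stable over both ${\mathbb{Q}}$ and ${\mathbb{Q}}(\zeta_p)$, so $f^n(z)$ is irreducible and there is a single orbit, comfortably within the claimed bounds. In the remaining case $c = r^p$ with $r \in {\mathbb{Z}}$, Corollary \ref{zfactors} supplies the factorizations $f(z) = \prod_{i=0}^{p-1} g_i(z)$ with $g_i(z) = z + r\zeta_p^i$ over ${\mathbb{Q}}(\zeta_p)$, and $f(z) = (z-r)h(z)$ over ${\mathbb{Q}}$, together with the crucial irreducibility of $g_i(f^m(z))$ and of both $f^m(z) - r$ and $h(f^m(z))$ for all $m \geq 1$.

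Writing $f^n(z) = f(f^{n-1}(z))$ and substituting, these factorizations lift to
\[f^n(z) = \prod_{i=0}^{p-1} g_i(f^{n-1}(z)) \quad \text{over } {\mathbb{Q}}(\zeta_p), \qquad f^n(z) = (f^{n-1}(z)-r)\,h(f^{n-1}(z)) \quad \text{over } {\mathbb{Q}},\]
and the asserted irreducibility of each composite factor yields exactly $p$ (respectively $2$) irreducible factors for every $n \geq 2$. The base case $n = 1$ is immediate, since the constituent polynomials $g_i(z)$ and $z - r$ are linear. The step requiring the most bookkeeping is verifying separability and handling the $n = 1$ base case cleanly, but neither presents a real obstacle; once Corollary \ref{zfactors} is in hand, the bound on orbits is essentially a tautology.
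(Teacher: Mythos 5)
Your proof is correct and is essentially the intended argument: the paper presents Corollary \ref{orbitbounds} as an immediate consequence of Corollary \ref{zfactors}, and you correctly supply the two bridging observations --- that $f^n$ is separable (since $0$ escapes to infinity and hence is not periodic, so $(f^n)' = p^n \prod_{k=0}^{n-1}(f^k(z))^{p-1}$ does not vanish at any root of $f^n$), and that Galois orbits on roots of a separable polynomial correspond bijectively to its monic irreducible factors, whose number Corollary \ref{zfactors} bounds by $1$, $p$, or $2$ in the respective cases.
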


Corollary \ref{orbitbounds} proves the corresponding cases of Sookdeo's conjecture on integral points in backwards orbits \cite[Conjecture 1.2]{sookdeo}. \label{ingramdisc} It also provides an interesting counterpart to a result of Ingram \cite{ingram}, where it is shown that the number of orbits of the Galois action on roots of $f^n(z) - a$ remains bounded as $n$ grows, provided that there exists a prime ${\mathfrak{p}}$ of $K$ with ${\mathfrak{p}} \nmid n$ and $|f^n(a)|_{\mathfrak{p}} \to \infty$ as $n \to \infty$. Corollary \ref{orbitbounds} corresponds to the case $a = 0$, and since $c$ is an integer, $f^n(0)$ is an integer for each $n \geq 1$, implying that $|f^n(0)|_{\mathfrak{p}} \leq 1$ for all $n \geq 1$ and for all ${\mathfrak{p}}$. Hence Corollary \ref{orbitbounds} provides information beyond Ingram's result. Ingram's methods involve giving a Galois-equivariant ${\mathfrak{p}}$-adic power series that conjugates $f$ to $z^d$ on a neighborhood of infinity. 

\begin{proof}[Proof of Lemma \ref{zcase}]
We remark that if $y \in {\mathbb{Z}}$ is not a $p$th power in ${\mathbb{Z}}$, then it is not a $p$th power in ${\mathbb{Q}}(\zeta_p)$. Indeed, since $p$ is odd prime, $y$ must have a prime factor $q \in {\mathbb{Z}}$ occurring to a power not divisible by $p$. But $q$ is unramified if $q \neq p$ and otherwise $q = {\mathfrak{p}}^{p-1}$ for some prime ${\mathfrak{p}}$ of ${\mathbb{Q}}(\zeta_p)$. In either case, the prime ideal factorization of $(q)$ in ${\mathcal{O}}_{{\mathbb{Q}}(\zeta_p)}$ has a prime occurring to power not divisible by $p$. Hence $q$, and therefore $y$, is not a $p$th power in ${\mathbb{Q}}(\zeta_p)$.

It is enough to prove the lemma in the case $c > 0$, for if $f_c = z^p + c$ and $f_{-c} = z^p - c$, then $f_{-c}^n(0) = -f_c^n(0)$. Thus we suppose that $c > 0$. For any positive integer $y$, we have 
\begin{equation} \label{yplusone}
	(y+1)^p - y^p = \sum_{i = 0}^{p-1} {p\choose{i}} y^i > py^{p-1}.
\end{equation}
and therefore $y^p + c$ is not a $p$th power when $0 < c < py^{p-1}$. If $y \geq c$, then clearly this holds. But $f^{n-1}(0) \geq c$ for all $n \geq 2$, and hence $(f^{n-1}(0))^p + c = f^n(0)$ is not a $p$th power in ${\mathbb{Z}}$ for all $n \geq 2$. Therefore if $c$ is not a $p$th power in ${\mathbb{Z}}$, then $f^n(0)$ is not a $p$th power in ${\mathbb{Z}}$ for all $n \geq 1$.

Suppose that $c = r^p$ for some positive integer $r$.  We handle first the case $i = 0$, where \eqref{factor} takes the values $r^p + r, r^{p^2} + r^p + r, (r^{p^2} + r^p)^p + r^p + r, É$ for $n = 2, 3, 4, \ldots$. As above, we have that $y^p + (r^p + r)$ is not a $p$th power in ${\mathbb{Z}}$ provided $0 < r^p + r < py^{p-1}$. This clearly holds if $y \geq r^p$. But $f^{n-2}(0) \geq r^p$ for all $n \geq 3$, and hence $f^{n-1}(0) + r = (f^{n-2}(0))^p + r^p + r$ is not a $p$th power for all $n \geq 3$. Observe also that $f(0) + r = r^p + r$ lies strictly between $r^p$ and $(r+1)^p$, and thus is not a $p$th power.

Suppose now that  $0 < i < p$. Because $p$ is prime, elements of the form $f^{n-1}(0) + r \zeta_p^i$ with $0 < i < p$ are Galois conjugate, and hence have identical norms. Thus it is enough to show that $N_{{\mathbb{Q}}(\zeta_p)/{\mathbb{Q}}}(f^{n-1}(0) + r \zeta_p) := N(f^{n-1}(0) + r \zeta_p)$ is not a $p$th power in ${\mathbb{Z}}$.  First, note that 
\begin{align}
	N(t + r \zeta_p) & = \sum_{i = 0}^{p-1} (-1)^{i}r^it^{p-1-i}  = t^{p-1} - rt^{p-2} + \cdots - r^{p-2}t + r^{p-1} \nonumber \\
	& = t^{p-1} - rt^{p-3}(t - r) - \cdots - r^{p-2}(t - r) \label{normeq1} \\
	& = t^{p-1} - rt^{p-2} + r^2t^{p-4}(t - r) + \cdots + r^{p-3}t(t - r) + r^{p-1}. \label{normeq2}
\end{align}
Then if $y \geq r^p$ and $t = y^p + r^p$, we have $t > r^p \geq r$, and thus from \eqref{normeq1} we obtain
\begin{align*}
	N(t + r \zeta_p) & < t^{p-1} = (y^p + r^p)^{p-1} \leq (y^p + y)^{p-1} \\
	& = y^{p-1}(y^{p-1} + 1)^{p-1} < (y^{p-1} + 1)^p.
\end{align*}
On the other hand, from \eqref{normeq2}, we have
\begin{align*}
	N(t + r \zeta_p) & > t^{p-1} - rt^{p-2} = t^{p-2}(t - r) > (t-r)^{p-1} \\
	& = (y^p + r^p-r)^{p-1} \geq (y^p)^{p-1} = (y^{p-1})^p.
\end{align*}
Now take $t = f^{n-1}(0) = (f^{n-2}(0))^p + r^p$; so when $n \geq 3$ we have $y = f^{n-2} (0) \geq r^p$. Therefore $N(f^{n-1}(0) + r \zeta_p)$ is not a $p$th power in ${\mathbb{Z}}$ for $n \geq 3$. 

If $n = 2$, then $y = 0$ and $t = r^p$ in the above calculation. When $r > 1$, we have $t > r$, and hence from \eqref{normeq1} and \eqref{normeq2} we obtain 
\begin{equation} \label{rp}
	(r^p)^{p-1} - r(r^p)^{p-2} < N(r^p + r \zeta_p) < (r^p)^{p-1},
\end{equation} 
Note that if $x \geq 1$, then $(x-1)^k \leq x^k - x^{k-1}$, as can be seen by multiplying both sides of the obvious inequality $(x-1)^{k-1} \leq x^{k-1}$ by $(x-1)$. Thus
\[(r^{p-1} - 1)^p \leq (r^{p-1})^p - (r^{p-1})^{p-1} = (r^p)^{p-1} - r(r^p)^{p-2}.\]
From \eqref{rp} we now have that $N(r^p + r \zeta_p)$ is not a $p$th power in ${\mathbb{Z}}$ when $r > 1$. 

When $n = 2$ and $r = 1$, we must adopt a different approach, since $N(1 + \zeta_p) = 1$.  We show that $1+ \zeta_p$ is not a $p$-th power separately in Lemma \ref{heightlem}, completing the proof of the present lemma. 
\end{proof}

\begin{lemma} \label{heightlem}
Let $p$ be an odd prime. Then $1 + \zeta_p$ is not a $p$th power in ${\mathbb{Q}}(\zeta_p)$. 
\end{lemma}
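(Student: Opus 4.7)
The plan is to work $p$-adically in the completion $K_{\mathfrak{p}} := \mathbb{Q}_p(\zeta_p)$ of $\mathbb{Q}(\zeta_p)$ at the unique prime $\mathfrak{p}$ above $p$. This is a totally ramified extension of $\mathbb{Q}_p$ of degree $p-1$, with $\pi := 1 - \zeta_p$ a uniformizer and $v_\pi(p) = p - 1$. Since $N_{\mathbb{Q}(\zeta_p)/\mathbb{Q}}(1 + \zeta_p) = \Phi_p(-1) = 1$ for $p$ odd (as computed in the proof of Lemma \ref{zcase}), the element $1+\zeta_p$ is a unit of $\mathbb{Z}[\zeta_p]$; any hypothetical $p$th root $\alpha \in \mathbb{Q}(\zeta_p)$ must therefore also be a unit, and hence lie in $\mathcal{O}_{K_{\mathfrak{p}}}^{*}$.

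Suppose for contradiction that $\alpha^p = 1 + \zeta_p = 2 - \pi$. Reducing modulo $\pi$ gives $\alpha^p \equiv 2 \pmod{\pi}$, and since the residue field is $\mathbb{F}_p$, on which Frobenius is the identity, this forces $\alpha \equiv 2 \pmod{\pi}$. Writing $\alpha = 2 + \epsilon$ with $v_\pi(\epsilon) \geq 1$ and expanding binomially yields
\[
\alpha^p - 2 \;=\; (2^p - 2) \;+\; \sum_{k=1}^{p} \binom{p}{k}\, 2^{p-k}\, \epsilon^{k}.
\]

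The idea is now to compare $\pi$-adic valuations in the equation $\alpha^p - 2 = -\pi$. The right-hand side satisfies $v_\pi(-\pi) = 1$. For the left-hand side, Fermat's little theorem gives $p \mid 2^p - 2$, so $v_\pi(2^p - 2) \geq v_\pi(p) = p - 1$; for $1 \leq k \leq p-1$ we have $v_p\!\bigl(\binom{p}{k}\bigr) = 1$, hence $v_\pi\!\bigl(\binom{p}{k}\, 2^{p-k}\, \epsilon^k\bigr) \geq (p-1) + k \geq p$; and for $k = p$, $v_\pi(\epsilon^p) \geq p$. Combining, $v_\pi(\alpha^p - 2) \geq p - 1 \geq 2$ since $p \geq 3$, which contradicts $v_\pi(-\pi) = 1$.

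The only real subtlety is bookkeeping: one has to remember throughout that $\pi$ (not $p$) is the uniformizer, so that the ramification factor $p-1$ enters both through $v_\pi(p)$ and through $v_\pi\bigl(\binom{p}{k}\bigr)$. Once this is in place the argument is completely elementary, and the gap $p - 1 \geq 2 > 1$ between the two valuations gives the contradiction uniformly for every odd prime $p$.
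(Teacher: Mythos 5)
Your argument is correct; I checked each step. Writing $\alpha^p = 1+\zeta_p = 2 - \pi$ with $\pi = 1 - \zeta_p$, reducing mod $\pi$ to $\mathbb{F}_p$ where Frobenius is trivial gives $\alpha \equiv 2$, and then the binomial expansion of $(2+\epsilon)^p - 2$ together with $v_\pi(p) = p-1$ forces $v_\pi(\alpha^p - 2) \geq p-1 \geq 2$, contradicting $v_\pi(-\pi) = 1$. The use of $p \geq 3$ is exactly where it must be, and the reduction to $\alpha \in \mathcal{O}_{K_\mathfrak{p}}^{*}$ is sound (it is in fact automatic that $\alpha$ is integral, since $\mathcal{O}_{K_\mathfrak{p}}$ is integrally closed).

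The paper's proof uses the same germ of an idea --- Frobenius/Freshman's-dream at the prime above $p$ --- but stops one step sooner and stays global: writing $x = \sum a_i \zeta_p^i \in \mathbb{Z}[\zeta_p]$, one has $x^p \equiv \sum a_i^p \equiv n \pmod{p}$ for some rational integer $n$, so $x^p = n + py$; but $1 + \zeta_p$ is visibly not of the form $n + py$ (its $\zeta_p$-coordinate is $1$, not $\equiv 0 \bmod p$). That is a two-line argument in $\mathbb{Z}[\zeta_p]/(p)$, with no local field, no uniformizer, and no valuation bookkeeping. Your version is a $\pi$-adic refinement of this: it localizes at $\mathfrak{p}$, reduces mod $\pi$ rather than mod $p$, and then extracts a quantitative valuation gap ($p-1$ versus $1$) rather than simply observing that $1 + \zeta_p$ is not a rational integer mod $p$. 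What your route buys is a sharper statement ($v_\pi(\alpha^p - 2) \geq p-1$, which in principle also rules out $1 + \zeta_p$ being a $p$th power times a unit $\equiv 1 \bmod \mathfrak{p}^{p-2}$, say); what it costs is the completion and the Newton-expansion bookkeeping. For the lemma as stated, the paper's global mod-$p$ argument is the more economical one, and you might note that your ``$\alpha \equiv 2 \pmod{\pi}$'' step is doing the same work as the paper's ``$x^p \equiv n \pmod p$'' but at finer resolution than is actually needed.
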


\begin{proof}
First, note that it suffices to show that $1 + \zeta_p$ is not a $p$th power in ${\mathbb{Z}}[\zeta_p]$.  Indeed, $1 + \zeta_p$ is a unit in ${\mathbb{Q}}(\zeta_p)$, and so any $p$-th root of $1 + \zeta_p$ must also be a unit; but the units of ${\mathbb{Q}}(\zeta_p)$ are contained in ${\mathbb{Z}}[\zeta_p]$.

Then, suppose there exists $x \in {\mathbb{Z}}[\zeta_p]$ such that $x^p = 1 + \zeta_p$.  If we reduce this equation mod $p$, the left hand side must be congruent to an integer; that is, there exists $n \in {\mathbb{Z}}$ and $y \in {\mathbb{Z}}[\zeta_p]$ such that $x^p = n + py = 1 + \zeta_p$, which is clearly impossible.  
\end{proof}


\begin{thebibliography}{}

\bibitem{ballot}
Ballot, C. (1995).
\newblock Density of prime divisors of linear recurrences.
\newblock {\em Mem. Amer. Math. Soc.\/}~{\em 115\/}(551), viii+102.

\bibitem{periodsmoduloprimes}
Benedetto, R.~L., D.~Ghioca, B.~Hutz, P.~Kurlberg, T.~Scanlon, and T.~J.
  Tucker.
\newblock Periods of rational maps modulo primes.
\newblock To appear in \textit{Math. Ann.}

\bibitem{ML}
Benedetto, R.~L., D.~Ghioca, P.~Kurlberg, and T.~J. Tucker (2012).
\newblock A case of the dynamical {M}ordell-{L}ang conjecture.
\newblock {\em Math. Ann.\/}~{\em 352\/}(1), 1--26.

\bibitem{settled}
Boston, N. and R.~Jones (2012).
\newblock Settled polynomials over finite fields.
\newblock {\em Proc. Amer. Math. Soc.\/}~{\em 140\/}(6), 1849--1863.

\bibitem{Stoch}
Brze{\'z}niak, Z. and T.~Zastawniak (1999).
\newblock {\em Basic stochastic processes}.
\newblock Springer Undergraduate Mathematics Series. London: Springer-Verlag
  London Ltd.
\newblock A course through exercises.

\bibitem{Cornell}
{Cornell}, G. (1982).
\newblock {On the construction of relative genus fields}.
\newblock {\em Trans. Amer. Math. Soc.\/}~{\em 271\/}(2), 501--511.

\bibitem{danielson}
Danielson, L. and B.~Fein (2002).
\newblock On the irreducibility of the iterates of {$x^n-b$}.
\newblock {\em Proc. Amer. Math. Soc.\/}~{\em 130\/}(6), 1589--1596
  (electronic).

\bibitem{doerksen}
Doerksen, K. and A.~Haensch (2012).
\newblock Primitive prime divisors in zero orbits of polynomials.
\newblock {\em Integers\/}~{\em 12}, A9, 7.

\bibitem{recseq}
Everest, G., A.~van~der Poorten, I.~Shparlinski, and T.~Ward (2003).
\newblock {\em Recurrence sequences}, Volume 104 of {\em Mathematical Surveys
  and Monographs}.
\newblock Providence, RI: American Mathematical Society.

\bibitem{faber-hutz}
Faber, X., B.~Hutz, and M.~Stoll (2011).
\newblock On the number of rational iterated preimages of the origin under
  quadratic dynamical systems.
\newblock {\em Int. J. Number Theory\/}~{\em 7\/}(7), 1781--1806.

\bibitem{FT}
Fr{\"o}hlich, A. and M.~J. Taylor (1993).
\newblock {\em Algebraic number theory}, Volume~27 of {\em Cambridge Studies in
  Advanced Mathematics}.
\newblock Cambridge: Cambridge University Press.

\bibitem{ostafe}
Gomez-Perez, D., A.~Ostafe, and I.~E. Shparlinski.
\newblock {On irreducible divisors of iterated polynomials}.
\newblock Preprint, 2012.

\bibitem{gratton}
Gratton, C., K.~Nguyen, and T.~J. Tucker.
\newblock {ABC implies primitive prime divisors in arithmetic dynamic}.
\newblock To appear, \textit{Bull. Lond. Math. Soc.} Available at
  http://arxiv.org/abs/1208.2989.

\bibitem{jhsdioph}
Hindry, M. and J.~H. Silverman (2000).
\newblock {\em Diophantine geometry}, Volume 201 of {\em Graduate Texts in
  Mathematics}.
\newblock New York: Springer-Verlag.
\newblock An introduction.

\bibitem{ingram}
Ingram, P.
\newblock {Arboreal Galois representations and uniformization of polynomial
  dynamics}.
\newblock Available at http://arxiv.org/abs/1111.3607.

\bibitem{ingramcanon}
Ingram, P. (2009).
\newblock Lower bounds on the canonical height associated to the morphism
  {$\phi(z)=z^d+c$}.
\newblock {\em Monatsh. Math.\/}~{\em 157\/}(1), 69--89.

\bibitem{galmart}
Jones, R. (2007).
\newblock Iterated {G}alois towers, their associated martingales, and the
  {$p$}-adic {M}andelbrot set.
\newblock {\em Compos. Math.\/}~{\em 143\/}(5), 1108--1126.

\bibitem{quaddiv}
Jones, R. (2008).
\newblock The density of prime divisors in the arithmetic dynamics of quadratic
  polynomials.
\newblock {\em J. Lond. Math. Soc. (2)\/}~{\em 78\/}(2), 523--544.

\bibitem{itconst}
{Jones}, R. (2012).
\newblock {An iterative construction of irreducible polynomials reducible
  modulo every prime}.
\newblock {\em J. Algebra\/}~{\em 369}, 114--128.

\bibitem{krieger}
Krieger, H.
\newblock {Primitive prime divisors in the critical orbit of $z^d + c$}.
\newblock To appear, \textit{Int. Math. Res. Not}. Available at
  http://arxiv.org/abs/1203.2555v2.

\bibitem{langalg}
Lang, S. (2002).
\newblock {\em Algebra\/} (third ed.), Volume 211 of {\em Graduate Texts in
  Mathematics}.
\newblock New York: Springer-Verlag.

\bibitem{Lorenzini}
Lorenzini, D. (1996).
\newblock {\em An invitation to arithmetic geometry}, Volume~9 of {\em Graduate
  Studies in Mathematics}.
\newblock Providence, RI: American Mathematical Society.

\bibitem{neukirch}
Neukirch, J. (1999).
\newblock {\em Algebraic number theory}, Volume 322 of {\em Grundlehren der
  Mathematischen Wissenschaften [Fundamental Principles of Mathematical
  Sciences]}.
\newblock Berlin: Springer-Verlag.
\newblock Translated from the 1992 German original and with a note by Norbert
  Schappacher, With a foreword by G. Harder.

\bibitem{rice}
Rice, B. (2007).
\newblock Primitive prime divisors in polynomial arithmetic dynamics.
\newblock {\em Integers\/}~{\em 7}, A26, 16.

\bibitem{Rosen}
Rosen, M. (2002).
\newblock {\em Number theory in function fields}, Volume 210 of {\em Graduate
  Texts in Mathematics}.
\newblock New York: Springer-Verlag.

\bibitem{samuel}
Samuel, P. (1966).
\newblock {\em Lectures on old and new results on algebraic curves}.
\newblock Notes by S. Anantharaman. Tata Institute of Fundamental Research
  Lectures on Mathematics, No. 36. Bombay: Tata Institute of Fundamental
  Research.

\bibitem{sookdeo}
Sookdeo, V.~A. (2011).
\newblock Integer points in backward orbits.
\newblock {\em J. Number Theory\/}~{\em 131\/}(7), 1229--1239.

\bibitem{stoll}
Stoll, M. (1992).
\newblock Galois groups over {${\bf Q}$} of some iterated polynomials.
\newblock {\em Arch. Math. (Basel)\/}~{\em 59\/}(3), 239--244.

\end{thebibliography}
\end{document}